\newtheorem{theorem}{Theorem}[section]
\newtheorem{lemma}[theorem]{Lemma}
\newtheorem{proposition}[theorem]{Proposition}
\theoremstyle{remark}
\newtheorem{remark}[theorem]{Remark}
\newcommand \norm[1] {\left\Vert #1\right\Vert}
\def\n{\textbf{\textit{n}}}
\def\R{\mathbb{R}}
\def\G{\mathcal{G}}
\def\F2o{\overline{F_2}}
\def\d{{\rm d}}
\def \l {\langle}
\def \r {\rangle}
\def\ddt{\frac{\d}{\d t}}
\def\vphi{\varphi}
\def\div{\mathrm{div}\,}
\def \au {\rm}
\def \ti {\it}
\def \jou {\rm}
\def \bk {\it}
\def \no#1#2#3 {{\bf #1} (#3), #2.}
\def \eds#1#2#3 {#1, #2, #3.}
\def \nome#1#2 {{\bf #1}, (#2).}
\newcommand{\sref}[2]{\hyperref[#2]{#1 \ref*{#2}}}
\numberwithin{equation}{section}
\def\d{{\rm d}}
\newcommand{\habil}[1]{}
\newcommand{\uloc}{\operatorname{uloc}}
\newcommand{\eps}{\ensuremath{\varepsilon}}
\def \au {\rm}
\def \ti {\it}
\def \jou {\rm}
\def \bk {\it}
\def \no#1#2#3 {{\bf #1} (#3), #2.}
\def \eds#1#2#3 {#1, #2, #3.}
\def\@settitle{\begin{center}%
  \baselineskip14\p@\relax
    %\bfseries
    %\normalfont
    \huge%<- NEW
%\uppercasenonmath\@title
  \@title
  \end{center}%
}
\begin{document}

\title[Cahn-Hilliard equation with non-degenerate mobility]
{\emph{New results for the Cahn-Hilliard equation \\ with non-degenerate mobility:\\
well-posedness and longtime behavior}}
\author[Monica Conti, Pietro Galimberti, Stefania Gatti  \& Andrea Giorgini]{Monica Conti, Pietro Galimberti, Stefania Gatti \& Andrea Giorgini}

\address{Politecnico di Milano\\
Dipartimento di Matematica\\
Via E. Bonardi 9, I-20133 Milano, Italy \\
\href{mailto:monica.conti@polimi.it}{monica.conti@polimi.it},
\href{mailto:andrea.giorgini@polimi.it}{andrea.giorgini@polimi.it}}

\address{Università degli Studi di Modena e Reggio Emilia\\ 
Dipartimento di Scienze Fisiche, Informatiche e Matematiche\\ 
Via Campi 213/B, I-41125 Modena, Italy
\\
\href{mailto:stefania.gatti@unimore.it}{stefania.gatti@unimore.it},
\href{mailto:pietro.galimberti@edu.unife.it}{pietro.galimberti@edu.unife.it}
}

%35A02 Uniqueness problems for PDEs: global uniqueness, local uniqueness, non-uniqueness
%35B30 %%Dependence of solutions to PDEs on initial and/or boundary data and/or on parameters of PDEs
% 35K55 %% Nonlinear parabolic equations
% 35B40 Asymptotic behavior of solutions to PDEs
% 35Q82 %% PDEs in connection with statistical mechanics
% 35B65, %% Smoothness and regularity of solutions of PDE
% 80A22 %%  Stefan problems, phase changes, etc.
%\keywords{Cahn-Hilliard equation, Non-degenerate mobility, Uniqueness, Regularity, Convergence to steady states}

%\date{\today}

\begin{abstract}
We study the Cahn-Hilliard equation with non-degenerate concentration-dependent mobility and logarithmic potential in two dimensions. We show that any weak solution is unique, exhibits propagation of uniform-in-time regularity, and stabilizes towards an equilibrium state of the Ginzburg-Landau free energy for large times. These results improve the state of the art dating back to a work by Barrett and Blowey. Our analysis relies on the combination of enhanced energy estimates, elliptic regularity theory and tools in critical Sobolev spaces. 
\medskip
\\
\noindent
\textbf{Keywords:} Cahn-Hilliard equation, non-degenerate mobility, uniqueness, regularity, convergence to steady states
\medskip
\\
\noindent
\textbf{MSC 2010:} 35K55, 35A02, 35B65, 35B40, 35Q82
\end{abstract}

\maketitle

\section{Introduction and main results}
\label{Sec-Intro}

We consider the Cahn-Hilliard equation
\begin{subequations}% Sistema Cahn_Hilliard con mobilità
\begin{alignat}{2}
\label{CH1} 
\partial_t \vphi &= \div( b(\vphi) \nabla \mu) \quad &\text{in }  \Omega \times (0,\infty),\\
\label{CH2}
\mu &= -\Delta \vphi + \Psi^{\prime}(\vphi)  \quad &\text{in } \Omega \times (0,\infty),
\end{alignat}
\end{subequations}
 equipped with the following boundary and initial conditions 
\begin{equation}  \label{nCH-mu}%Boundary condition
\partial_\n \vphi= b(\varphi) \partial_\n \mu=0 \quad \text{on } \, \partial \Omega \times (0,\infty),\quad
\vphi|_{t=0}= \vphi_0 \quad \text{in } \, \Omega.
\end{equation}
Here, $\Omega$ is a bounded domain in  $\R^d$, $d \leq 3$, and $\n$ is the outward normal vector on $\partial \Omega$. The state variable $\varphi$ represents the difference of phase concentrations in a binary mixture, and takes values in the interval $[-1,1]$, with $\vphi= \pm 1$ being the pure phases. 
The function $b: [-1,1] \to \mathbb{R}$ is the so-called Onsager mobility, which measures the strength of the diffusion. The mobility can assume one the following forms
\begin{align}
(1) \qquad &b(\cdot) \equiv m_0 \in \mathbb{R}_+ &&\text{(constant case)}, \label{m-con}\\
(2) \qquad &b \in C([-1,1]): 0 < b_m \le b(s) \le b_M, \ s \in [-1,1] &&\text{(non-degenerate case)},
\label{m-ndeg}\\
(3) \qquad &b(s)= m_0(1-s^2), \ s \in [-1,1], \ m_0\in \mathbb{R}_+  &&\text{(degenerate case)}.
\label{m-deg}
\end{align}
The double-well function $\Psi$ is the Flory-Huggins potential (also called Boltzmann-Gibbs entropy) 
\begin{equation}\label{f-log} %  Logaritmic potential
\Psi(s)=F(s)- \frac{\Theta_0}{2}s^2 = \frac{\Theta}{2}\Big[ (1+s)\ln(1+s)+(1-s)\ln (1-s)\Big] - \frac{\Theta_0}{2}s^2,\quad s\in [-1,1], 
\end{equation}
where the positive parameters $\Theta$ and $\Theta_0$ satisfy the condition $\Theta_0 - \Theta>0$. 
The function $\mu$ is the so-called chemical potential, that is the variational derivative of the Ginzburg-Landau free energy
\begin{equation*}
E(\vphi) = \int_{\Omega}\frac{1}{2} |\nabla \vphi|^2 + \Psi(\vphi) \, \d x.
\end{equation*} 

The Cahn-Hilliard equation was originally proposed as a simple model for the process of phase separation in binary alloys at fixed temperature \cite{CH, CH71}. More recently, the Cahn-Hilliard model has become a paradigm for liquid-liquid phase separation arising in polymer/solvent mixtures and 
in living tissues, such as tumor cells \cite{SB}. The macroscopic motion of particles is driven by the minimization of the free energy $E(\varphi)$ through a gradient flow structure, where the chosen metric may depend on the  concentration-dependent mobility, namely different strengths of the mobility function tune the minimization of the energy.

In the context of the mathematical analysis of the Cahn-Hilliard equation, a robust theoretical framework has been largely developed in the constant mobility case \eqref{m-con}. Specifically, the global well-posedness of weak solutions and the propagation of regularity have been extensively studied in \cite{ABELS2009, AW2007, DD, EL1991, KNP, GGW2018}. The analysis of the so-called separation property has been developed in \cite{GGG2023, GP2024, GGM, HW2021, MZ}, and the characterization of the longtime behavior has been addressed in \cite{AW2007, MZ}.

In contrast, much less is known when the mobility function depends on the concentration $\varphi$. This is due to the highly nonlinear character of the equation \eqref{CH1} when $b$ is non-constant. On the other hand, a theory for weak solutions is still expected by the presence of the (formal) {\it energy balance}: 
\begin{equation}
E(\vphi(t))+ \int_0^t \int_{\Omega} b(\vphi) |\nabla \mu|^2 \, \d x \, \d s = E(\vphi_0), \quad \forall \, t \geq 0.
\end{equation}
In the degenerate mobility case \eqref{m-deg}, the only available result concerns with the global existence of weak solutions. This was shown in the seminal work \cite{EG1996}, see also \cite{YIN} for the one-dimensional case, and \cite{DD2016} for the case with regular potentials. More recently, a gradient flow approach has been successfully explored in \cite{CMN2019} and \cite{LMS2012}. We also mention the work \cite{SZ2013} analyzing a different class of degenerate mobility and singular potential functions. In the non-degenerate case \eqref{m-ndeg}, the global existence of weak solutions was established in \cite{BB1999}, following the approach developed in \cite{EG1996}. Besides, the authors of \cite{BB1999} proved the local existence and uniqueness of strong solutions in three dimensions. Furthermore, in the two dimensional case, the existence of strong solutions was claimed to hold globally in time. However, a mistake in the proof undermines the validity of this result, as we will discuss below. 
Concerning the longtime behavior, the existence of a global attractor was demonstrated in \cite{S2007} for generalized semiflows lacking the uniqueness of solutions. Lastly, we mention that the existence of weak solutions for mobility functions depending on the chemical potential, as proposed in \cite{Gurtin}, was proven in \cite{GMRS2011}. 

In this work, we will focus on the Cahn-Hilliard equation with non-degenerate concentration-dependent mobility. For the reader's convenience, we detail the results proven by Barrett and Blowey in \cite{BB1999}, which represent the state of the art about the well-posedness theory. We refer to Section \ref{S-Mathset} for the notation.
\begin{theorem}[Barrett \& Blowey, 1999]
\label{BB} %Esistenza e unicità Weak Solution
Assume that $\Omega \subset \mathbb{R}^d$, with $d=2,3$, is a convex polyhedron or $\partial \Omega \in C^{1,1}$, and the mobility $b$ satisfies \eqref{m-ndeg}. Then, the following holds:

\begin{itemize}
\item[(1)] Let $\vphi_0 \in H^1(\Omega) \cap L^{\infty}(\Omega)$ be such that $\norm{\vphi_0}_{L^{\infty}(\Omega)}\le 1$ and $\overline{\vphi_0}= \frac{1}{|\Omega|} \int_\Omega \varphi_0 (x) \, \d x \in (-1,1)$. Then, there exists a global 
%\footnote{The first part of Theorem \ref{BB} is slightly stronger than in \cite[Theorem 2.2]{BB1999}. In fact, the authors state that, for any $T>0$, there exists a weak solution on the interval $[0,T]$, instead of a global solution defined on $[0,\infty)$. Nevertheless, by a classical continuation argument, it is easily seen that the weak solution can be extended on $[0,\infty)$.} 
weak solution $\left( \vphi, \mu \right)$  to problem \eqref{CH1}-\eqref{CH2} and \eqref{nCH-mu} satisfying
\begin{align}
\label{BBweak1}
&\vphi \in L^{\infty}(0,T; H^1(\Omega)) \cap L^2(0,T; H^2(\Omega)), \quad 
\partial_t \varphi \in L^2(0,T; H_{(0)}^{-1}(\Omega) ),\\
\label{BBweak2}
&\mu \in L^2(0,T; H^1(\Omega)), \quad \Psi'(\vphi) \in L^2(0,T; L^2(\Omega)),
\end{align}
for all $T\geq 0$,  such that
\begin{alignat}{2}
\label{w-BB1}
&\l\partial_t \vphi, v \r + ( b(\vphi) \nabla \mu, \nabla v) = 0, \quad &&\forall \, v \in H^1(\Omega), 
\\
\label{w-BB2}
&(\mu, v) = ( \nabla \vphi, \nabla v) + ( \Psi'(\vphi), v), \quad &&\forall \, v \in H^1(\Omega),
\end{alignat}
for almost every $t \in (0,\infty)$.
\smallskip

\item[(2)] Suppose, in addition, that $b \in C^1([-1,1])$ and
\begin{equation}
\label{uniq-cond}
\quad \partial_t \varphi \in L^{\frac{8}{8-d}}(0,T; L^2(\Omega)),
\quad 
\varphi \in L^{2^d}(0,T; H^2(\Omega)), 
\quad
\mu \in L^{\frac{8}{6-d}}(0,T; H^1(\Omega)).
\end{equation}
Then, the solution $(\varphi,\mu)$ to \eqref{w-BB1}-\eqref{w-BB2} is unique on $[0,T]$.
\smallskip

\item[(3)] Let $\varphi_0 \in H^3(\Omega)$ be such that $\norm{ \varphi_0}_{L^{\infty}(\Omega)} < 1$, and $\partial_{\n} \vphi_0 = 0$ on $\partial \Omega$. 
Assume that $b \in C^1([-1,1])$. 
Then, the weak solution $(\varphi, \mu)$ to \eqref{w-BB1}-\eqref{w-BB2} satisfies
\begin{align}
&\vphi \in L^{\infty}(0,T_{\ast}; H^2(\Omega)) \cap H^1(0,T_{\ast};  H^1(\Omega)), \quad 
\partial_t \varphi \in L^\infty(0,T_{\ast};  H_{(0)}^{-1}(\Omega)), \\
&\mu \in L^{\infty}(0,T_{\ast}; H^1(\Omega)) \cap 
L^2(0,T_{\ast}; H^2(\Omega)),
\end{align}
where $T_{\ast}=T$ for all $T>0$ if $d = 2$, and $T_{\ast}>0$ depends on the norms of the initial datum if $d=3$.
\end{itemize}
\end{theorem}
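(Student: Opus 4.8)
\emph{Part (1): existence.} The plan follows the double approximation of \cite{EG1996,BB1999}. Since $b$ is already smooth and non-degenerate, only the singular potential is regularized: replace $\Psi$ by $\Psi_\delta\in C^2(\R)$, obtained by extending $\Psi''$ as a constant outside $[-1+\delta,1-\delta]$ and integrating twice, and solve the smooth problem by a Faedo--Galerkin scheme in the Neumann--Laplacian eigenbasis. The a priori estimates come from the energy identity: testing \eqref{w-BB1} with $\mu$ and \eqref{w-BB2} with $\partial_t\vphi$ gives $\ddt E_\delta(\vphi)+\int_\Omega b(\vphi)|\nabla\mu|^2=0$, which, using $b\ge b_m>0$ and $\Theta_0-\Theta>0$, bounds $\vphi$ in $L^\infty(0,T;H^1)$ and $\nabla\mu$ in $L^2(0,T;L^2)$ uniformly in $\delta$; conservation of the mean $\overline\vphi\equiv\overline{\vphi_0}\in(-1,1)$ together with the one-sided bound on $\Psi_\delta'$ controls $\overline\mu$, hence bounds $\mu$ in $L^2(0,T;H^1)$ and $\Psi_\delta'(\vphi)$ in $L^2(0,T;L^2)$; elliptic regularity for the Neumann problem (this is where the hypothesis on $\partial\Omega$ enters) gives $\vphi\in L^2(0,T;H^2)$, while \eqref{w-BB1} gives $\partial_t\vphi\in L^2(0,T;\Hzero)$. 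Passing to the limit by Aubin--Lions compactness (strong convergence of $\vphi$ in $L^2(0,T;H^1)$ and a.e.\ on $\Omega\times(0,T)$) identifies the nonlinear terms, and the uniform $L^2$-bound on $\Psi_\delta'(\vphi)$ combined with a.e.\ convergence (Fatou/Vitali) yields $\Psi'(\vphi)\in L^2(0,T;L^2)$ together with the constraint $|\vphi|\le1$ a.e.

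\emph{Part (2): uniqueness.} Given two solutions $(\vphi_i,\mu_i)$, $i=1,2$, with the same initial datum, hence the same conserved mean, set $\vphi=\vphi_1-\vphi_2$, $\mu=\mu_1-\mu_2$, so that $\overline{\vphi(t)}\equiv0$ and $\mathcal{N}\vphi$, the inverse Neumann Laplacian of $\vphi$, is available. Testing the difference of \eqref{w-BB1} with $\mathcal{N}\vphi$, integrating by parts via $-\Delta\mathcal{N}\vphi=\vphi$, $\partial_\n\mathcal{N}\vphi=0$, and invoking the difference of \eqref{w-BB2}, one reaches
\begin{equation*}
\frac12\ddt\|\vphi\|_{\Hzero}^2+\int_\Omega b(\vphi_1)|\nabla\vphi|^2+\int_\Omega b(\vphi_1)\big(\Psi'(\vphi_1)-\Psi'(\vphi_2)\big)\vphi=R,
\end{equation*}
where $R$ collects the commutator terms carrying $b'(\vphi_1)\nabla\vphi_1$ and the term $\big((b(\vphi_1)-b(\vphi_2))\nabla\mu_2,\nabla\mathcal{N}\vphi\big)$. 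The second term on the left is coercive, $\ge b_m\|\nabla\vphi\|^2$; in the third the convex part $F$ contributes a nonnegative term (monotonicity of $F'$, positivity of $b$) while the concave part gives $-\Theta_0\int_\Omega b(\vphi_1)\vphi^2\ge-\Theta_0 b_M\|\vphi\|^2$, absorbed via $\|\vphi\|^2\le C\|\nabla\vphi\|\|\vphi\|_{\Hzero}$. The remainder $R$ is estimated by H\"older's inequality and the Gagliardo--Nirenberg/Ladyzhenskaya inequalities in dimension $d$, distributing $\nabla\vphi$-powers so as to absorb $\ve\|\nabla\vphi\|^2$ into the left-hand side; the integrability exponents in \eqref{uniq-cond} are exactly those making the leftover coefficient of $\|\vphi\|_{\Hzero}^2$ lie in $L^1(0,T)$, whence $\vphi\equiv0$ by Gronwall and then $\mu\equiv0$ from \eqref{w-BB2}.

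\emph{Part (3): propagation of regularity.} The plan is a higher-order estimate on the time-differentiated system. Differentiating \eqref{CH1}--\eqref{CH2} in $t$, testing the equation for $\partial_t\vphi$ with $\mathcal{N}\partial_t\vphi$ and the time-differentiated \eqref{w-BB2} with $\partial_t\vphi$, and combining, one is led to an inequality of the form
\begin{equation*}
\frac12\ddt\|\partial_t\vphi\|_{\Hzero}^2+b_m\|\nabla\partial_t\vphi\|^2+\int_\Omega b(\vphi)F''(\vphi)|\partial_t\vphi|^2\le C\|\partial_t\vphi\|_{\Hzero}^2+\mathcal{R},
\end{equation*}
where $\mathcal{R}$ gathers the mobility commutators; after substituting $\partial_t\mu=-\Delta\partial_t\vphi+\Psi''(\vphi)\partial_t\vphi$ the most dangerous of these is $\big(\Delta\partial_t\vphi,\,b'(\vphi)\nabla\vphi\cdot\nabla\mathcal{N}\partial_t\vphi\big)$. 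Once this inequality is closed on $(0,T_\ast)$, one gets $\partial_t\vphi\in L^\infty(0,T_\ast;\Hzero)\cap L^2(0,T_\ast;H^1)$; elliptic regularity for the divergence-form equation $\div(b(\vphi)\nabla\mu)=\partial_t\vphi$ (with $b\ge b_m$) upgrades $\mu$ to $L^\infty(0,T_\ast;H^1)$; testing \eqref{w-BB2} with $\Psi'(\vphi)$ and using the one-sided bound on $\Psi'$ gives $\Psi'(\vphi)\in L^\infty(0,T_\ast;L^2)$, hence $\vphi\in L^\infty(0,T_\ast;H^2)$ by the Neumann estimate for $-\Delta\vphi=\mu+\Theta_0\vphi-F'(\vphi)$; finally a bootstrap in the elliptic estimate for $\mu$, writing $\Delta\mu=b(\vphi)^{-1}\big(\partial_t\vphi-b'(\vphi)\nabla\vphi\cdot\nabla\mu\big)$ from \eqref{CH1}, yields $\mu\in L^2(0,T_\ast;H^2)$.

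The main obstacle is closing the last displayed inequality \emph{globally} when $d=2$. The dangerous terms in $\mathcal{R}$ --- above all $\big(\Delta\partial_t\vphi,b'(\vphi)\nabla\vphi\cdot\nabla\mathcal{N}\partial_t\vphi\big)$ --- are critical: a plain use of Sobolev embeddings either requires a norm of $\partial_t\vphi$ stronger than $H^1$, or leaves a Gronwall coefficient not known a priori to belong to $L^1(0,T)$. Closing it asks for trading the $L^\infty(0,T;H^1)\cap L^2(0,T;H^2)$ regularity of $\vphi$ and $\mu$ from part (1) against a logarithmically growing Sobolev constant of Brezis--Gallouet type; this is precisely where the argument of \cite{BB1999} is flawed and what the present paper repairs through enhanced energy estimates and critical-space techniques. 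In $d=3$ these terms scale supercritically, so only a short time $T_\ast$, depending on $\|\vphi_0\|_{H^3}$, is reached, consistent with the statement.
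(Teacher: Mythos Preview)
This theorem is a summary of results from \cite{BB1999} and is not proved in the present paper; the paper cites it as the state of the art, critiques its part~(3), and then proves the sharper Theorem~\ref{Goal_thm}. Your sketch of part~(1) is the Elliott--Garcke/Barrett--Blowey scheme and is fine.

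For part~(2), your test function $\mathcal N\vphi$ (the unweighted inverse Neumann Laplacian, i.e.\ $\G$ in the paper's notation) differs from what \cite{BB1999} and this paper actually use, namely the mobility-weighted operator $\G_{\vphi_1}$ of \eqref{def G_q}. With $\G_{\vphi_1}$ the cross term $(b(\vphi_1)\nabla\mu,\nabla\G_{\vphi_1}\vphi)$ collapses exactly to $(\mu,\vphi)$; with your $\mathcal N$ you pick up an extra commutator $(\mu,\,b'(\vphi_1)\nabla\vphi_1\cdot\nabla\mathcal N\vphi)$ carrying the full $\mu=-\Delta\vphi+\cdots$, i.e.\ two derivatives of $\vphi$. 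This can still be closed under \eqref{uniq-cond}, but it is messier than your sketch suggests, and it is not the route taken in \cite{BB1999}.

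For part~(3), your Lyapunov functional $\|\partial_t\vphi\|_{\Hzero}^2$ is again not the one Barrett--Blowey use. As the paper recounts in Section~\ref{S-Strong} (``First approach''), they work with $\Lambda(t)=\int_\Omega b(\vphi)|\nabla\mu|^2$: differentiating \eqref{wCH2} in time, testing with $\partial_t\vphi$, and using the identity \eqref{counts}, the single dangerous term is $\int_\Omega b'(\vphi)\,\partial_t\vphi\,|\nabla\mu|^2$, which involves only first derivatives. In your route the term $(\Delta\partial_t\vphi,\,b'(\vphi)\nabla\vphi\cdot\nabla\mathcal N\partial_t\vphi)$ contains $\Delta\partial_t\vphi$, one derivative beyond what the dissipation $\|\nabla\partial_t\vphi\|_{L^2}^2$ controls, so a further integration by parts (spawning more commutators) is needed before any Gagliardo--Nirenberg bookkeeping can start.

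Your diagnosis of the $d=2$ global case is nevertheless exactly the paper's point: the Barrett--Blowey ODE inequality for $\Lambda$ (their \eqref{BB-ineq}) silently drops a $1/\rho$ factor coming from the critical Sobolev constant in \eqref{GN}--\eqref{CI}; once restored (cf.\ \eqref{final-1}), their continuation argument \eqref{concl-1}--\eqref{concl-11} only gives a local bound. The paper repairs this not via Brezis--Gallouet but via (i) the weak-solution regularity $\vphi\in L^4_{\uloc}([0,\infty);H^2(\Omega))$, (ii) a Gronwall lemma of Liu--Zhang type (Lemma~\ref{gronwallino}), and (iii) a cleaner ``Second approach'' based on the elliptic estimate \eqref{H2} for $\G_\vphi$ that lands directly in the uniform Gronwall framework.
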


The results presented in Theorem \ref{BB} exhibit some shortcomings. Firstly, the condition in \eqref{uniq-cond} on the time derivative $\partial_t \varphi$ is never reached by weak solutions (cf. \eqref{BBweak1}-\eqref{BBweak2}). Thus, no uniqueness result regarding the weak solutions is known, leaving this issue an open question in the Cahn-Hilliard theory. Secondly, concerning the global existence of strong solutions in the two dimensional case (cf. Theorem \ref{BB} - (3)), we point out that the proof of \cite[Corollary 2.1]{BB1999} contains a slight but crucial error. Precisely, in the so-called {\it second energy estimates} involving 
$$
\Lambda(t)= \norm{\sqrt{b(\varphi(t))} \nabla \mu(t) }_{L^2(\Omega)}^2\!,
$$
%setting 
%$$
%B(\cdot):= \max \left\lbrace \norm{\sqrt{b(\varphi(\cdot))} \nabla \mu(\cdot) }_{L^2(\Omega)}^2 , 1 \right\rbrace,
%$$
the authors obtained the following differential inequality  
\begin{equation}
\label{BB-ineq}
\ddt  \Lambda (t)
\leq C \left( 1+\Lambda^{2(1+\rho)}(t) \right)\!, \quad \forall \, \rho \in \left( 0 , \frac12 \right) \!,
\end{equation}
where the positive constant $C$ is independent of $\rho$ (cf. \cite[Eqn. (2.51)]{BB1999}). Relying on the arbitrary smallness of $\rho$, the authors inferred a global control on $\Lambda(t)$. However, as we will show below, a thorough inspection of their proof reveals that the correct constant should be of the form $\frac{C}{\rho}$ (cf. \eqref{final-1} below). This negatively affects the conclusion of the ODE argument used in \cite{BB1999} (cf. \eqref{concl-1}-\eqref{concl-11}). 

The goal of this contribution is to address the gaps left open by Theorem \ref{BB} concerning the well-posedness theory of weak solutions to the  
Cahn-Hilliard equation with non-degenerate mobility in two dimensions. First, we will prove the uniqueness of weak solutions. Then, we will show that the weak solutions exhibit a global smoothness effect at positive times (propagation of regularity). As a consequence, we will prove that any weak solution converges towards a single steady state as $t\to \infty$. 

In order to rigorously state our achievements, we direct the reader to Section \ref{S-Mathset} for the necessary notation, in particular concerning the Bochner spaces $BC([0,\infty);X)$ and $L_{\rm uloc}^p([0,\infty); X)$ of functions that are respectively bounded and continuous in time, and locally uniformly integrable in time, with values in a Banach space $X$. 
\medskip

Our main result is as follows: 

\begin{theorem}\label{Goal_thm}%Thm goal
Let $\Omega \subset \mathbb{R}^2$ be a bounded domain with  $\partial \Omega$ of class $C^3$. Assume that the mobility $b$ satisfies \eqref{m-ndeg}.
Then, the following results hold:
\begin{itemize}
\item[(A)] \textbf{Existence of weak solutions.} Let $\vphi_0 \in H^1(\Omega)\cap L^\infty(\Omega)$ be such that $\norm{\vphi_0}_{L^{\infty}(\Omega)} \le 1$ and $\overline{\vphi_0}=m \in (-1,1)$. Then, there exists a weak solution $\vphi: [0,\infty) \times \Omega \to \mathbb{R}$ to \eqref{CH1}-\eqref{CH2} and \eqref{nCH-mu} such that
\begin{align}
\label{rw1}
&\vphi \in BC([0,\infty); H_{(m)}^1(\Omega)) \cap L^4_{\rm uloc}([0, \infty); H^2(\Omega)) \cap L^2_{\rm uloc}([0, \infty); W^{2,p}(\Omega)), 
\\
\label{rw2}
&\vphi \in L^{\infty}(\Omega \times (0, \infty)): \ |\vphi(x,t)| < 1 \quad \text{a.e. in }\ \Omega \times (0, \infty),
\\
\label{rw3}
&\partial_t \vphi \in L^2(0, \infty; H_{(0)}^{-1}(\Omega)), \quad F'(\varphi)\in L_{\rm uloc}^2 ([0,\infty); L^p(\Omega)), 
\\
\label{rw4}
&\mu \in L_{\rm uloc}^2([0,\infty); H^1(\Omega)),
\end{align} 
for any $ 2\leq p < \infty$, and 
\begin{subequations}%CH weak
\begin{alignat}{2}
\label{wCH1}
&\l\partial_t \vphi, v \r + ( b(\vphi) \nabla \mu, \nabla v) = 0, \quad &&\forall \, v \in H^1(\Omega), \ \text{for a.e.} \, t \in (0,\infty),
\\
\label{wCH2}
&\mu= - \Delta \vphi + \Psi'(\vphi), \quad &&\text{a.e. in } \, \Omega \times (0,\infty),
\end{alignat}
\end{subequations} 
as well as $\partial_\n \varphi=0$ on $\partial \Omega$ almost everywhere on $(0,\infty)$, and $\varphi|_{t=0}=\varphi_0$.
Moreover, the following energy equality holds
\begin{equation}
\label{EE}
E(\vphi(t))+ \int_0^t \int_{\Omega} b(\vphi) |\nabla \mu|^2 \, \d x \, \d s = E(\vphi_0), \quad \forall \, t \geq 0.
\end{equation}

\noindent
\item[(B)] \textbf{Uniqueness of weak solutions.}
Suppose that $b \in C^2([-1,1])$. Let $\varphi_1, \varphi_2$ be two weak solutions originating from the initial conditions $\varphi_1^0, \varphi_2^0$, respectively, such that $\overline{\varphi_1^0}=\overline{\varphi_2^0}$. Then, for any $T>0$, there exists a positive constant $C$ such that
\begin{equation}
\label{UNIQ}
\norm{\vphi_1(t)-\vphi_2(t)}_{H_{(0)}^{-1}(\Omega)} 
\leq C \norm{ \vphi_1^0-\vphi_2^0 }_{H_{(0)}^{-1}(\Omega)}\!, \quad \forall \, t \in [0,T].
\end{equation}
The constant $C$ depends only on the parameters of the system, the final time $T$, and the initial free energies $E(\vphi_1^0)$ and $E(\vphi_2^0)$. In particular, the weak solution is unique.
\smallskip

\noindent
\item[(C)] \textbf{Propagation of regularity.} Assume that $b \in C^2([-1,1])$. For any $\tau \in (0,1)$, there holds
\begin{align}
\label{REG-phi1}
&\varphi \in L^\infty( \tau,\infty; W^{2,p}(\Omega)), \quad 
\partial_t \varphi \in L^\infty( \tau,\infty; H^1_{(0)}(\Omega)) \cap
L_{\uloc}^2( [\tau,\infty);H^1(\Omega)),\\
\label{REG-phi2}
&\mu \in L^{\infty}(\tau,\infty; H^1(\Omega))\cap L_{\uloc}^2([\tau,\infty);H^3(\Omega)),
\quad F'(\varphi) \in L^\infty(\tau,\infty; L^p(\Omega)),
\end{align}
for any $2 \leq p <\infty$. The equations \eqref{CH1}-\eqref{CH2} are satisfied almost everywhere in $\Omega \times (\tau,\infty)$ and  the boundary condition $\partial_\n \mu=0$ on $\partial \Omega$ holds almost everywhere on $(\tau,\infty)$. In addition, there exists $\delta>0$ depending on $\tau$ and the norms of the initial datum such that 
\begin{equation}
\label{separation}
|\vphi(x,t)|\leq 1-\delta, \quad \forall \, (x,t) \in \overline{\Omega}\times [\tau,\infty).
\end{equation}

\noindent
\item[(D)] \textbf{Convergence to a stationary state.} If $b \in C^2([-1,1])$, the solution $\varphi(t)$ converges to $\varphi_\infty$ in $W^{2-\eps, p}(\Omega)$ as $t \rightarrow \infty$, for any $\eps>0$ and any $2 \leq p <\infty$, where $\varphi_\infty \in W^{2,p}(\Omega)$ is a solution to the stationary Cahn-Hilliard equation
\begin{subequations}
\begin{alignat}{2}
\label{Stat-CH1}
 -\Delta \varphi_\infty + \Psi'(\varphi_\infty)&=\overline{\Psi'(\varphi_\infty)} \quad &&\text{ in } \Omega,
 \\
 \label{Stat-CH2}
 \partial_{\n} \varphi_\infty&=0 \quad  &&\text{ on } \partial\Omega,
\end{alignat}
\end{subequations}
%where $\mu_\infty \in \mathbb{R}$, 
such that  
$$
\frac{1}{|\Omega|} \int_\Omega \varphi_\infty(x) \, \d x = \overline{\varphi_0}.
$$
\end{itemize}
\end{theorem}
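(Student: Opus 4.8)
The plan is to prove \textnormal{(A)}--\textnormal{(D)} in order, each building on the previous ones, as follows.

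\textbf{(A) Existence and enhanced regularity.} I would build a weak solution by a regularization of the singular potential: replace $F$ by $C^{2}$ convex functions $F_{\varepsilon}$ with $F_{\varepsilon}''\ge 0$ and $F_{\varepsilon}'\to F'$, solve the resulting problem with smooth potential and non-degenerate mobility by a Faedo--Galerkin scheme (the basic energy estimate closing since $b\ge b_{m}$ and $\Psi_{\varepsilon}''\ge-\Theta_{0}$, Aubin--Lions for compactness), and pass to the limit in the energy balance, using the monotonicity of $F_{\varepsilon}'$ to identify $F'(\varphi)$ and to force $|\varphi|\le 1$ a.e. For the regularity \eqref{rw1}--\eqref{rw4}: the energy balance gives $\sqrt{b(\varphi)}\nabla\mu\in L^{2}_{t,x}$, hence $\nabla\mu\in L^{2}_{\uloc}(L^{2})$; testing \eqref{CH2} with $|F'(\varphi)|^{p-2}F'(\varphi)$ and discarding the nonnegative contribution of $F''\ge 0$ gives $\|F'(\varphi)\|_{L^{p}}\lesssim_{p}\|\mu\|_{L^{p}}+1$, and since the mean $\overline{\mu}=\overline{\Psi'(\varphi)}$ is controlled through the standard bound $\int_{\Omega}F'(\varphi)(\varphi-m)\gtrsim\|F'(\varphi)\|_{L^{1}}-1$ combined with \eqref{CH2}, one obtains $\|F'(\varphi)\|_{L^{p}}\lesssim_{p}\|\nabla\mu\|_{L^{2}}+1$, whence by elliptic regularity $\varphi\in L^{2}_{\uloc}(W^{2,p})$ and $F'(\varphi)\in L^{2}_{\uloc}(L^{p})$. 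The $L^{4}$-integrability in \eqref{rw1} follows from the sharper estimate obtained by pairing $-\Delta\varphi=\mu-\Psi'(\varphi)$ with $-\Delta\varphi$ and discarding $\int_{\Omega}F''(\varphi)|\nabla\varphi|^{2}\ge 0$, which gives $\|\varphi(t)\|_{H^{2}}^{2}\lesssim 1+\|\nabla\mu(t)\|_{L^{2}}$, hence $\|\varphi\|_{H^{2}}^{4}\in L^{1}_{\uloc}$; then $|\varphi|<1$ a.e. because $F'(\varphi)\in L^{p}$, $\varphi\in BC(H^{1}_{(m)})$ by Aubin--Lions from $\partial_{t}\varphi\in L^{2}(H^{-1}_{(0)})$, and the energy equality holds since $\langle\partial_{t}\varphi,\mu\rangle$ is well defined.

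\textbf{(B) Uniqueness.} This is the heart of the matter and I expect the main obstacle to lie here. Set $\varphi=\varphi_{1}-\varphi_{2}$ (zero mean) and $\mu=\mu_{1}-\mu_{2}$. Testing the difference of \eqref{wCH1} with the usual $\mathcal{N}\varphi$, with $\mathcal{N}$ the inverse Neumann Laplacian, does not close: it produces $\int_{\Omega}\mu\,b'(\varphi_{1})\nabla\varphi_{1}\cdot\nabla\mathcal{N}\varphi$, which is linear in $\|\varphi\|_{H^{-1}_{(0)}}$ and carries no further factor of the difference, so any estimate leaves an irreducible forcing proportional to $\|\mu\|$, not controlled by the difference. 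The remedy is to test with $\mathcal{N}_{1}\varphi$, where $\mathcal{N}_{1}$ inverts, on zero-mean functions, the time-dependent weighted Neumann operator $\mathcal{A}_{1}v=-\Div(b(\varphi_{1})\nabla v)$: then the chemical-potential pairing collapses exactly to $(\mu,\varphi)=\|\nabla\varphi\|_{L^{2}}^{2}+(F'(\varphi_{1})-F'(\varphi_{2}),\varphi)-\Theta_{0}\|\varphi\|_{L^{2}}^{2}$, producing the coercive dissipation $\|\nabla\varphi\|_{L^{2}}^{2}$ and the nonnegative monotone term, at the price of the operator-derivative term $\tfrac{1}{2}\langle\partial_{t}\varphi_{1},\,b'(\varphi_{1})|\nabla\mathcal{N}_{1}\varphi|^{2}\rangle$ coming from $\tfrac{d}{dt}\langle\varphi,\mathcal{N}_{1}\varphi\rangle$, plus the drift term $((b(\varphi_{1})-b(\varphi_{2}))\nabla\mu_{2},\nabla\mathcal{N}_{1}\varphi)$. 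I would bound the drift term by $\|\nabla\varphi\|_{L^{2}}\|\varphi\|_{*,1}\|\nabla\mu_{2}\|_{L^{2}}$, where $\|v\|_{*,1}^{2}:=\langle v,\mathcal{N}_{1}v\rangle$, using $\|\varphi\|_{L^{4}}\|\nabla\mathcal{N}_{1}\varphi\|_{L^{4}}\lesssim\|\nabla\varphi\|_{L^{2}}\|\varphi\|_{*,1}$ and Young (with $\|\nabla\mu_{2}\|_{L^{2}}^{2}\in L^{1}_{\uloc}$), and estimate the operator-derivative term through $\partial_{t}\varphi_{1}\in L^{2}(H^{-1}_{(0)})$ and elliptic regularity for $\mathcal{A}_{1}$ (its coefficient $b(\varphi_{1})$ lying in $[b_{m},b_{M}]$ and inheriting Lipschitz bounds from $\varphi_{1}\in W^{2,p}$, $p$ large, in two dimensions), exploiting the uniform equivalence $\|\cdot\|_{*,1}\simeq\|\cdot\|_{H^{-1}_{(0)}}$; a Gronwall argument in $H^{-1}_{(0)}$ then gives \eqref{UNIQ}, crucially using that $\|\varphi_{1}\|_{H^{2}}^{4}$, $\|\varphi_{1}\|_{W^{2,p}}^{2}$, $\|\mu_{i}\|_{H^{1}}^{2}$ and $\|\partial_{t}\varphi_{1}\|_{H^{-1}_{(0)}}^{2}$ all belong to $L^{1}_{\uloc}$ by part \textnormal{(A)}. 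The delicate point---the main obstacle---is to carry out the elliptic estimates for the variable-coefficient operator $\mathcal{A}_{1}$ (regularity of $\mathcal{N}_{1}\varphi$ in $H^{2}$ and $W^{2,q}$) with constants whose time dependence is integrable against the norms of $\varphi_{1}$ available for weak solutions, which forces a careful bookkeeping of the powers of $\|\varphi_{1}\|_{H^{2}}$ and $\|\varphi_{1}\|_{W^{2,p}}$ in each term.

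\textbf{(C) Propagation of regularity.} The task is the correct version of the second energy estimate mis-stated in \cite{BB1999}. With $\Lambda(t)=\|\sqrt{b(\varphi)}\nabla\mu\|_{L^{2}}^{2}=-\tfrac{d}{dt}E(\varphi)\in L^{1}(0,\infty)$ by \eqref{EE}, I would differentiate \eqref{CH1} in time, test with the $H^{-1}_{(0)}$-representative of $\partial_{t}\varphi$, use $F''\ge 0$, and control $\int_{\Omega}b'(\varphi)\partial_{t}\varphi|\nabla\mu|^{2}$ via two-dimensional Gagliardo--Nirenberg/Brezis--Gallouet inequalities together with the identity $\Delta\mu=(\partial_{t}\varphi-b'(\varphi)\nabla\varphi\cdot\nabla\mu)/b(\varphi)$, reaching $\tfrac{d}{dt}\Lambda+c\|\nabla\partial_{t}\varphi\|_{L^{2}}^{2}\le(C+C\Lambda)\Lambda+(\text{l.o.t.})$; this is precisely where the honest constant in \cite{BB1999} is $C/\rho$, not $C$, so their device of sending $\rho\to 0$ is unavailable. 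I would close instead by the uniform Gronwall lemma: since $\int_{0}^{\infty}\Lambda<\infty$, on every interval $[t,t+\tau]$ the integral of $C+C\Lambda$ is bounded by $C\tau+C\int_{0}^{\infty}\Lambda$, whence $\Lambda(s)\le\tau^{-1}\big(\int_{0}^{\infty}\Lambda\big)\,e^{\,C\tau+C\int_{0}^{\infty}\Lambda}$ for every $s\ge\tau$. This gives $\mu\in L^{\infty}(\tau,\infty;H^{1})$ and $\partial_{t}\varphi\in L^{\infty}(\tau,\infty;H^{-1}_{(0)})\cap L^{2}_{\uloc}([\tau,\infty);H^{1})$; the elliptic bootstrap of \textnormal{(A)} then yields $\varphi\in L^{\infty}(\tau,\infty;W^{2,p})$ and $F'(\varphi)\in L^{\infty}(\tau,\infty;L^{p})$, and a further bootstrap on $\Delta\mu$ gives $\mu\in L^{2}_{\uloc}([\tau,\infty);H^{3})$. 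The strict separation \eqref{separation} then follows from a De Giorgi iteration on the level sets of $F'(\varphi)$ for $-\Delta\varphi+F'(\varphi)=\mu+\Theta_{0}\varphi$, whose right-hand side is bounded in $L^{\infty}(\tau,\infty;L^{q})$ for every $q<\infty$ (since $\mu\in L^{\infty}(\tau,\infty;H^{1})$ and $d=2$), in the spirit of the recent approach to instantaneous separation in two dimensions.

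\textbf{(D) Convergence to a stationary state.} For $t\ge\tau$ the solution is strictly separated and bounded in $W^{2,p}$, so $\Psi$ is analytic on the range of $\varphi(t)$ and $\{\varphi(t):t\ge\tau\}$ is relatively compact in $C^{1}(\overline{\Omega})$ and in $W^{2-\eps,p}(\Omega)$. Since $E(\varphi(t))$ decreases to some $E_{\infty}$ and $\int_{0}^{\infty}\|\nabla\mu\|_{L^{2}}^{2}<\infty$, every $\omega$-limit point $\varphi_{\infty}$ has $\nabla\mu_{\infty}=0$, i.e.\ it solves \eqref{Stat-CH1}--\eqref{Stat-CH2} with $\overline{\varphi_{\infty}}=\overline{\varphi_{0}}$. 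To promote subsequential convergence to convergence of the whole trajectory I would apply the {\L}ojasiewicz--Simon inequality for the Ginzburg--Landau energy near $\varphi_{\infty}$ (valid because $E$ is analytic on the strictly-separated regime; the non-degenerate mobility enters only through the dissipation metric and, as $b_{m}\le b\le b_{M}$, the dissipation $\Lambda$ is comparable to $\|\nabla\mu\|_{L^{2}}^{2}$, so the classical argument goes through), which combined with $\tfrac{d}{dt}E=-\Lambda$ yields $\int_{\tau}^{\infty}\|\partial_{t}\varphi\|_{H^{-1}_{(0)}}\,dt<\infty$; hence $\varphi(t)$ is Cauchy in $H^{-1}_{(0)}$ and converges to $\varphi_{\infty}$, and interpolating with the uniform $W^{2,p}$-bound upgrades this to convergence in $W^{2-\eps,p}(\Omega)$ for every $\eps>0$ and $2\le p<\infty$.
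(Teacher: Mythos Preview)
Your overall architecture matches the paper's: in (A) you upgrade Barrett--Blowey's weak solution via the same estimates (test with $-\Delta\varphi$ to get $L^4_{\uloc}(H^2)$, mean control of $\mu$, elliptic bootstrap); in (B) you use the weighted inverse $\mathcal{N}_1=\G_{\varphi_1}$ exactly as the paper does; your (C) is precisely the paper's ``second approach'' (estimate $\int b'\partial_t\varphi|\nabla\mu|^2$ through $\|\mu-\overline{\mu}\|_{H^2}$, close by the uniform Gronwall lemma using $\|\varphi\|_{H^2}^4,\Lambda\in L^1_{\uloc}$); and (D) is the standard \L ojasiewicz--Simon route, identical to the paper's.

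There is, however, a genuine gap in (B): the rigorous justification of the chain rule for $\frac{d}{dt}\langle\Phi,\mathcal{N}_1\Phi\rangle$. You write the operator-derivative term as $\tfrac12\langle\partial_t\varphi_1,\,b'(\varphi_1)|\nabla\mathcal{N}_1\Phi|^2\rangle$, but $\partial_t\varphi_1$ lives only in $H^{-1}_{(0)}$, so this pairing is \emph{not} a priori defined---one must show that $b'(\varphi_1)|\nabla\G_{\varphi_1}\Phi|^2\in H^1(\Omega)$ in a time-integrable way. The paper devotes an entire subsection to this: it first regularizes $\varphi_1\mapsto\varphi_1^\varepsilon$ via $(-\varepsilon\Delta+I)^{-1}$ so that $\partial_t\varphi_1^\varepsilon\in L^2(H^1)$ and the naive chain rule holds, then passes to the limit. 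After expanding $\nabla(b'(\varphi_1)|\nabla\G_{\varphi_1}\Phi|^2)$ one gets a term in $b''(\varphi_1)\nabla\varphi_1|\nabla\G_{\varphi_1}\Phi|^2$ (this is exactly why $b\in C^2$ is required, which you do not explain) and a term in $b'(\varphi_1)D^2\G_{\varphi_1}\Phi\,\nabla\G_{\varphi_1}\Phi$; controlling the latter in $L^2_tL^2_x$ forces the $W^{2,4}$ elliptic estimate for $\G_{\varphi_1}\Phi$ with the specific time-integrability $\G_{\varphi_1}\Phi\in L^{8/3}(0,T;W^{2,4})$. Your remark that ``regularity of $\mathcal{N}_1\varphi$ in $H^2$ and $W^{2,q}$'' is the delicate point is correct in spirit but misses that the real issue is making the time-derivative identity itself rigorous, not just bounding the resulting terms.

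Two smaller imprecisions: your drift bound $\|\varphi\|_{L^4}\|\nabla\mathcal{N}_1\varphi\|_{L^4}\lesssim\|\nabla\varphi\|_{L^2}\|\varphi\|_{*,1}$ is not true---a factor $\|\varphi_1\|_{H^2}^{1/2}$ is unavoidable (it enters through the $H^2$ estimate for $\G_{\varphi_1}\Phi$), though after Young's inequality this still closes since $\|\varphi_1\|_{H^2}^4\in L^1_{\uloc}$. And in (C) your claimed inequality $\frac{d}{dt}\Lambda\le (C+C\Lambda)\Lambda$ should read $\frac{d}{dt}\Lambda\le C(1+\|\varphi\|_{H^2}^4+\Lambda)\Lambda$; the extra coefficient is equally in $L^1_{\uloc}$, so the uniform Gronwall argument is unaffected.
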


\begin{remark} Some remarks are in order:
\begin{itemize}

\item Our analysis can be straightforwardly generalized for {\it singular} potentials $\Psi$ of the form
\begin{equation}%Nonlinear term
\Psi(s) = F(s) - \frac{\Theta_0}{2} s^2, \quad \forall s \in [-1,1],
\end{equation}
where $F \in C([-1,1]) \cap C^2(-1,1)$  such that $F(0)=F'(0)=0$, 
$\lim_{s \to \pm 1} F'(s) = \pm \infty$, and $F''(s) \ge \Theta >0$, for all s in $(-1,1)$. Furthermore, growth assumptions on $F'$ or $F''$ are required for the separation property \eqref{separation} as in \cite{GGG2023} and \cite{GP2024}.

\item We observe that the validity of the energy equality \eqref{EE} for weak solutions was already shown in \cite[Lemma 2.4]{S2007}.

\item If the initial datum $\vphi_0\in H^2(\Omega)$ is such that $- \Delta \vphi_0 + \Psi'(\vphi_0) \in H^1(\Omega)$, $\overline{\varphi_0} \in (-1,1)$ and $\partial_\n \varphi_0=0$ on $\partial \Omega$, then the corresponding weak solution $\varphi$ is a strong solution, namely $\varphi$ satisfies \eqref{REG-phi1}-\eqref{REG-phi2} and \eqref{separation} with $\tau=0$ (cf. Proposition \ref{CHstrong-solution}).

\item Theorem \ref{Goal_thm} focuses on uniqueness, regularity and longtime behavior of weak solutions. Thus, we require the mobility function $b \in C^2([-1,1])$, which is needed in the uniqueness part. On the other hand, the existence and uniqueness of strong solutions hold under the more general assumption $b \in C^1([-1,1])$ as in Proposition \ref{CHstrong-solution}.
\end{itemize}
\end{remark}

Let us now comment on the main novelties about the proof of Theorem \ref{Goal_thm}. Concerning the uniqueness part, the classical proxy to measure the distance between two weak solutions (say $\varphi_1$ and $\varphi_2$) for the Cahn-Hilliard equation with logarithmic potential is the norm in $H_{(0)}^{-1}$ (cf. definition in \eqref{Spazi-mn}). As already observed in \cite{BB1999}, in the case of concentration-dependent mobility, the appropriate choice is 
\begin{equation}
\label{proxy}
\left\| \sqrt{b(\varphi_1)} \nabla \G_{\varphi_1} (\varphi_1-\varphi_2) \right\|_{L^2(\Omega)},
\end{equation}
where the operator $\G_{\varphi_1}$ is defined in \eqref{def G_q}. In order to derive a suitable differential inequality for this quantity, we will address two crucial steps. Firstly, we need to rigorously justify the integration by parts formula \eqref{IP-time} involving the time derivative of \eqref{proxy}. 
Secondly, we will provide a novel control of the non-linear terms in \eqref{I1}-\eqref{I3}. 
Both steps rely on the combination of two analytical tools: an elliptic regularity theory for a Neumann problem in divergence form with concentration-dependent coefficient (cf. \eqref{epgq}), and the regularity $L_{\uloc}^4([0,\infty); H^2(\Omega))$ for weak solutions, firstly discovered in \cite{GGW2018}. Regarding the propagation of regularity, we will first revise the approach in \cite{BB1999}. By carefully examining the dependence of the constants appearing in the proof, our analysis shows that the correct differential inequality for $\Lambda$ reads as follows:
$$ 
\ddt \Lambda(t)
\leq \frac{C}{\rho} \left( 1+ \Lambda(t) \right)^{2(1+\rho)}, \quad \forall \, \rho \in \left( 0 , \frac14 \right) \!,
$$
for some positive constant $C$ independent of $\rho$, which differs from \eqref{BB-ineq}. Due to the factor $\frac{1}{\rho}$, the argument in \cite{BB1999} do not lead to a global bound for arbitrarily large initial datum. To overcome this issue, we will exploit a recent Grownall-type argument devised in \cite[Lemma 2.1]{liuzhang2023} (see Lemma \ref{gronwallino} below). However, a direct application of this result leads to a global bound which grows double exponentially in time \eqref{Glob_muH1_1}. Then, we propose a new approach to estimate the nonlinear term in \eqref{first est}. Once again, it relies on a careful use of the elliptic regularity theory for \eqref{epgq}. Thanks to the regularity $\varphi \in L_{\uloc}^4([0,\infty); H^2(\Omega))$, the final differential inequality \eqref{final-3} fits into the classical framework of the uniform Gronwall lemma, thus implying the desired global estimate independent of time. Finally, the proof of the convergence of the weak solution towards a steady state of the Cahn-Hilliard equation is performed by following the well established method in the constant mobility case (e.g., \cite{AW2007}). This hinges upon the regularity of the weak solutions, the presence of a strict Lyapunov function, as well as the 
\L ojasiewicz-Simon inequality.

Beyond the novelty of our results, we point out that the methods herein for the uniqueness and the propagation of regularity of weak solutions are robust. Specifically, Theorem \ref{Goal_thm} can be proven in presence of of polynomial potentials, such as $\Psi_0(s)=(1-s^2)^2$, which is widely used in the literature (see \cite{MIR2019} and the references therein). Furthermore, our analysis can be applied to address other formulations of the Cahn-Hilliard equation with non-degenerate mobility, including viscous regularizations \cite{CGM2024, MZ}, multi-component cases \cite{GGPS2023}, and on evolving surfaces \cite{CE2021, CEGP2023}. Additionally, we expect the application of our arguments to more complex diffuse interface models, such as Navier-Stokes-Cahn-Hilliard systems \cite{ADG2013-2, AGG2024, GG2010, GMT2019, GT2020},  Hele-Shaw/Darcy-Cahn-Hilliard models \cite{G2020, GGW2018}, Brinkman-Cahn-Hilliard systems \cite{CG, EG2019}, and non-isothermal models \cite{EGS, ERS}.

\section{Mathematical setting}
\label{S-Mathset}

\subsection{Notation and function spaces} Let $X$ be a Banach space. 
We denote by $X'$ its dual space and by $\l \cdot , \cdot \r_{X',X}$ the duality product. Given $ 1 \le p \le \infty$ and an interval $I \subseteq [0, \infty)$, we denote with $L^p( I; X)$ the space of all Bochner $p-$integrable functions defined from $I$ into $X$. We also define the space $W^{1,p}(I; X)$ as the set of functions $f \in  L^p( I; X)$ with distributional derivative $ \partial_t f \in L^p(I; X)$. In particular $H^1(I; X) = W^{1,2}(I; X)$. Moreover, $L^p_{\rm uloc}([0,\infty), X)$ denotes the space of functions $f \in L^p(0,T;X)$ for any $T>0$ such that there exists $C>0$ such that 
$$\norm{ f }_{L^p_{\rm uloc}([0,\infty), X)} := 
\sup_{t \ge 0} \int_t^{t+1}\norm{f(s)}_X^p \, \d s <\infty.
$$
For any $a>0$, the set $BC([a,\infty); X)$ is the Banach space of all bounded and continuous functions $f: [a,\infty) \to X$ equipped with the supremum norm, and $BUC([a,\infty); X)$ is the subspace of all bounded and uniformly continuous functions.

 Let $\Omega$ be a bounded smooth domain in $\R^2$. For any positive real $k$ and $1\leq p \leq \infty$, we denote by $W^{k,p}(\Omega)$ the Sobolev space of function in $L^p(\Omega)$ such that the distributional derivative of order up to $k$ is an element of $L^p(\Omega)$.  We use the notation $H^k(\Omega)$ for the Hilbert space $W^{k,2}(\Omega)$ with norm $\norm{\cdot}_ {H^k(\Omega)}$. In particular, for $k=1$, we denote the duality product between $H^1(\Omega)'$ and $H^1(\Omega)$ by $\l \cdot, \cdot \r$. 
We recall that $H^1(\Omega)$ is continuously embedded in $L^r(\Omega)$ for all $r\geq 1$, and the following critical Gagliardo-Nirenberg-Sobolev interpolation inequality holds true (see \cite[Proposition and Remark 1]{O}): there exists a universal constant $C>0$ such that, for all $2 \leq r <\infty$,
\begin{equation}
\label{GN}
\| f\|_{L^r(\Omega)}\leq C \sqrt{r} \| f\|_{L^2(\Omega)}^\frac{2}{r} \| f \|_{H^1(\Omega)}^{\frac{r-2}{r}}, \quad  \forall \, f \in H^1(\Omega).
\end{equation} 
%{\color{red} \cite[Eqn. (17)]{Trudinger}
%\begin{equation}
%\label{GN-2}
%\| f\|_{L^r(\Omega)}\leq C \sqrt{r}  \| f \|_{H^1(\Omega)}, \quad  \forall \, f \in H^1(\Omega).
%\end{equation}
%} 
As a consequence, there exists $C>0$ such that
\begin{equation}
\label{CI}%(2.16) corretta
\norm{f }_{H^1(\Omega)'} 
\le C \sqrt{\frac{r}{r-1}} \norm{f}_{L^r(\Omega)}\!, \quad \forall \, f \in L^r(\Omega),\  r>1.
\end{equation}
\begin{remark}
\label{BBm}
Notice that the term $\sqrt{\frac{r}{r-1}}$ was neglected in \cite{BB1999}, and in all the inequalities depending on \eqref{GN}.
\end{remark}
\noindent
Given the definition of total mass
\begin{equation*}
\overline{f}= \frac{1}{|\Omega|} \int_{\Omega} f(x) \, \d x \quad \text{if } f \in L^1(\Omega),
\end{equation*} 
we recall the Poincar\'{e}-Wirtinger inequality
\begin{equation}\label{poincare} %Poincaré's inequality
\norm{f}_{L^2(\Omega)} \le C_P \left( \norm{\nabla f}_{L^2(\Omega)} + |\overline{f}| \right)\!, \quad \forall \, f \in H^1(\Omega),
\end{equation}
where $C_P$ only depends on $\Omega$.
For any $m \in \mathbb{R}$, we introduce the function space
$$
H^1_{(m)}(\Omega)=\left\lbrace f \in H^1(\Omega) : \ \overline{f}=m \right\rbrace \! .
$$
If $m=0$, $H^1_{(0)}(\Omega)$ is a Hilbert space endowed with inner product $(f,g)_{H^1_{(0)}(\Omega)}=(\nabla f, \nabla g)$ and corresponding norm $\| f\|_{H^1_{(0)}(\Omega)}=\| \nabla f \|_{L^2(\Omega)}$.
Extending the definition of total mass as
\begin{equation*}
\overline{f} = \frac{1}{|\Omega|} \l f,1 \r \quad \text{if } f \in H^1(\Omega)',
\end{equation*}
we define the Hilbert space
\begin{equation}
\label{Spazi-mn} 
H^{-1}_{(0)}(\Omega)=\left\lbrace f  \in H^1(\Omega)' : \ \overline{f}=0 \right\rbrace \!,
\end{equation}
with corresponding norm defined as $\| f\|_{H^{-1}_{(0)}(\Omega)}:= \| f\|_{H^1(\Omega)'}$.

\subsection{Elliptic problems.} We now consider the homogeneous Neumann boundary problem for the Laplace operator
\begin{equation}
\label{NP}
\begin{cases}
-\Delta u = f \ & \text{in} \ \Omega \\
\partial_\n u = 0 \ & \text{on} \ \partial\Omega.
\end{cases}
\end{equation}
%{\color{red}
%The associated linear operator $A:H^1_{(0)}(\Omega) \to H^{-1}_{(0)}(\Omega)$ defined by
%\begin{equation*}
%\l Au, v \r = \int_{\Omega} \nabla u \cdot \nabla v \, \d x, \quad \forall \, u, v \in 
%H^{1}_{(0)}(\Omega)
%\end{equation*}
%is an isomorphism.}
The solution operator
%\footnote{Despite the recent literature makes use of the symbol $\mathcal{N}$, we keep the notation used in \cite{BB1999}.} 
$\G: H^{-1}_{(0)}(\Omega) \to H^{1}_{(0)}(\Omega)$ is defined as follows: for every $f \in H^{-1}_{(0)}(\Omega)$,  $\G f \in H^{1}_{(0)}(\Omega)$ is the unique function satisfying
\begin{equation}\label{Gdef}%Def 
(\nabla \G f, \nabla v) = \l f, v \r, \quad \forall \, v \in H^{1}_{(0)}(\Omega).
\end{equation}
For any $f \in H^{-1}_{(0)}(\Omega)$, $\norm{\nabla \G f}_{L^2(\Omega)}$ is a norm on $H^{-1}_{(0)}(\Omega)$, that is equivalent to $\| f\|_{H^{-1}_{(0)}(\Omega)}$.
%we equip the Hilbert space $V_0'$ with inner product and a norm given by
%\begin{equation*}
%\l f, g \r_{V_0'} = (\nabla \G f, \nabla \G g), \quad \forall \, f,g \in V_0', \quad  \text{and}\quad \norm{f}_{V_0'} = \norm{\nabla \G f}_{L^2(\Omega)}
%\end{equation*}
Besides,  
\begin{equation}\label{interp}
| \l f, v \r | \leq  \norm{\nabla \G f}_{L^2(\Omega)} \norm{\nabla v}_{L^2(\Omega)}, \quad \forall \, f \in H^{-1}_{(0)}(\Omega), v \in H^{1}_{(0)}(\Omega).
\end{equation}
Next, assuming \eqref{m-ndeg} and given a measurable function $q: \Omega \to [-1,1]$, we introduce the following elliptic problem
\begin{equation}\label{epgq}
\begin{cases}
-\div(b(q) \nabla u) = f \quad & \text{in}\ \Omega \\
b(q) \partial_\n u = 0 \quad & \text{on} \ \partial\Omega.
\end{cases}
\end{equation} 
Similarly to the definition of $\G$ in \eqref{Gdef}, we define the solution operator $\G_q:  H^{-1}_{(0)}(\Omega)  \to  H^{1}_{(0)}(\Omega)$ as follows: 
for every $f \in H^{-1}_{(0)}(\Omega)$,  $\G_q f \in H^{1}_{(0)}(\Omega)$ is the unique function satisfying
\begin{equation}\label{def G_q}
( b(q) \nabla \G_q f ,\nabla v ) = \l f, v \r, \quad \forall \, v \in  H^{1}_{(0)}(\Omega).
\end{equation}
For the readers' convenience, we report some results on the operator $\G_q$ obtained in \cite{BB1999} (see also \cite{G2020, S2007}). From the definition of $\G$ and $\G_q$, for all $f \in  H^{-1}_{(0)}(\Omega)$, we have
\begin{equation*}
\norm{\nabla \G f}_{L^2(\Omega)}^2 
= \l f , \G f \r = (b(q) \nabla \G_q f , \nabla \G f) 
\le \sqrt{b_M}  \norm{ \sqrt{b(q)} \nabla \G_q f}_{L^2(\Omega)} \norm{\nabla \G f}_{L^2(\Omega)}\!,
\end{equation*}
and
\begin{equation*}
\norm{\sqrt{b(q)} \nabla \G_q f}_{L^2(\Omega)}^2 
= \l f, \G_q f\r = ( \nabla \G f , \nabla \G_q v) 
\le \frac{1}{\sqrt{b_m}} \norm{\nabla \G f}_{L^2(\Omega)} 
\norm{\sqrt{b(q)} \nabla \G_q f}_{L^2(\Omega)} \!.
\end{equation*}
Combining the last two inequalities yields
\begin{equation}\label{equivalence of norm}
\sqrt{b_m} \norm{\sqrt{b(q)} \nabla \G_q f}_{L^2(\Omega)}
\le \norm{ \nabla \G f}_{L^2(\Omega)}
\le \sqrt{b_M}\norm{\sqrt{b(q)} \nabla \G_q f}_{L^2(\Omega)}\!, \quad \forall \, f \in  H^{-1}_{(0)}(\Omega).
\end{equation}
Thus, $\norm{\nabla \G_q f}_{L^2(\Omega)}$ is a norm on $ H^{-1}_{(0)}(\Omega)$ that is equivalent to $\norm{\nabla \G f}_{L^2(\Omega)}$.
%Moreover, for all $f \in V_0'$ and $ v \in V$,
%\begin{equation}\label{G_q interpolation}
%\l f, v \r = (b(q) \nabla \G_q v, \nabla \eta) \le b_M^{1/2} \norm{b(q)^{1/2} \G_q v} \norm{ \nabla \eta} = b_M^{1/2} \l v, \G_q v \r^{1/2} \norm{ \nabla \eta}.
%\end{equation}
Also, we observe that 
\begin{equation}
\label{AUTO}
\l f, \G_q  g \r = ( b(q) \nabla \G_q f, 	\nabla \G_q g)=  (\nabla \G_q f, b(q) \nabla \G_q g)
= \l g, \G_q f \r, \quad \forall \, f,g \in  H^{-1}_{(0)}(\Omega).
\end{equation}
%In light of \eqref{CI}, there exists $C>0$ such that 
%\begin{equation}
%\norm{\nabla \G_q f}_{L^2(\Omega)} 
%\le \frac{C}{b_m} \sqrt{\frac{r}{r-1}} \norm{f}_{L^r(\Omega)}\!, \quad 
%\forall \, f \in L^r(\Omega) \cap  H^{-1}_{(0)}(\Omega), \ r>1.
%\end{equation}
Besides, we have 
\begin{equation}
\label{inter-2}
\| f\|_{L^2(\Omega)}= \sqrt{\left( \nabla \G_q f, \nabla f\right)} \leq 
\| \nabla \G_q f \|_{L^2(\Omega)}^\frac12 \| \nabla f \|_{L^2(\Omega)}^\frac12,
\quad \forall \, f \in  H^{1}_{(0)}(\Omega).
\end{equation}
We now derive some elliptic estimates related to problem \eqref{epgq}. Let $f \in L^p(\Omega)	\cap  H^{-1}_{(0)}(\Omega)$, $p>1$. Assuming that $b\in C^1([-1,1])$ and $q \in H^2(\Omega)$,  we choose $v = \frac{w}{b(q)} - \overline{\frac{w}{b(q)}}$ for any $w \in  H^{1}_{(0)}(\Omega)$ (note that $v \in  H^{1}_{(0)}(\Omega)$)  in \eqref{def G_q} obtaining
\begin{equation}\label{weak-2P}
\int_{\Omega}  \nabla \G_q f \cdot \nabla w \, \d x = \bigintssss_{\Omega} \left( \frac{b'(q)}{b(q)}\nabla q \cdot  \nabla \G_q f  + \frac{f}{b(q)} \right)  w \, \d x =: (g, w).
\end{equation}
Therefore, by the classical regularity theory for problem \eqref{NP}, if $ \nabla \G_q f \cdot \nabla q \in L^p(\Omega)$, we have 
\begin{equation}
\label{W2r}
\| \G_q f\|_{W^{2,p}(\Omega)} \leq C \left( 
\left\|  \nabla q \cdot \nabla \G_q f \right\|_{L^p(\Omega)}
+
\| f\|_{L^p(\Omega)}
 \right)\!, \quad 
\forall \, 1<p<\infty,
\end{equation}
where the positive constant $C$ depends on $p$ and $b$. If $f \in L^2(\Omega)$ such that $\overline{f}=0$, by \cite[Theorem 2.1]{G2020} $\G_q f \in H^2(\Omega)$. Taking $p=2$ in \eqref{W2r}, and by using \eqref{GN} with $r=4$, we find
\begin{align*}
\| \G_q f\|_{H^2(\Omega)} 
&\leq C \left(
\norm{  \nabla q \cdot \nabla \G_q f}_{L^2(\Omega)} 
+  \| f\|_{L^2(\Omega)}  \right) 
\\
& \leq C \left( 
\norm{  \nabla q }_{L^4(\Omega)} 
\norm{\nabla \G_q f}_{L^4(\Omega)} +  \| f\|_{L^2(\Omega)} \right)
\\
& \leq C \left( 
\norm{\nabla q}_{L^2(\Omega)}^\frac12
\norm{q}_{H^2(\Omega)}^\frac12
\norm{  \nabla \G_q f }_{L^2(\Omega)}^\frac12
\norm{ \G_q f }_{H^2(\Omega)}^\frac12
+\| f\|_{L^2(\Omega)} 
\right)\!.
\end{align*}
Then, it follows from the Young inequality that
\begin{equation}
\label{H2}
\| \mathcal{G}_{q} f\|_{H^2(\Omega)}\leq C \left( \| \nabla q\|_{L^2(\Omega)} \| q\|_{H^2(\Omega)} \| \nabla \mathcal{G}_q f\|_{L^2(\Omega)}+ \| f\|_{L^2(\Omega)} \right)\!.
\end{equation}
The above inequality is a particular case of the following one, which is proven in \cite{BB1999} (we report its proof in Appendix \ref{App-0} for completeness)
\begin{equation}\label{H2-BB}
\norm{\G_q f}_{H^2(\Omega)} \le   
\left(\frac{C^2 s^2}{s-2}\right)^{\frac{s}{4}} 
\norm{\nabla q}_{L^2(\Omega)}^{\frac{s-2}{2}} 
\norm{q}_{H^2(\Omega)} \norm{\nabla \G_q f}_{L^2(\Omega)}
+ \frac{s C}{2} \norm{f}_{L^2(\Omega)}, \quad \forall \, s \in (2,\infty),
\end{equation}
where the positive constant $C$ is independent of $s$. Furthermore, if $f \in L^4(\Omega)$ with $\overline{f}=0$, $g $ in \eqref{weak-2P} belongs to $L^4(\Omega)$ since $\G_q f \in H^2(\Omega)$ and $\nabla q \in H^1(\Omega)$. Choosing $p=4$ in \eqref{W2r}, and exploiting \eqref{GN} with $r=8$, we obtain
\begin{align}
\| \mathcal{G}_q f\|_{W^{2,4}(\Omega)}
&\leq 
C \left( 
 \norm{\nabla q}_{L^8(\Omega)} \norm{\nabla \mathcal{G}_q f}_{L^8(\Omega)} + \| f\|_{L^4(\Omega)} \right) \notag
 \\
 &\leq C \left(  \norm{\nabla q}_{L^2(\Omega)}^\frac14 \norm{q}_{H^2(\Omega)}^\frac34 \norm{\nabla \mathcal{G}_q f}_{L^2(\Omega)}^\frac14  
 \norm{\mathcal{G}_q f}_{H^2(\Omega)}^\frac34 
 +\| f\|_{L^4(\Omega)}  
  \right)\!.
  \label{W24}
\end{align}
Finally, if $f \in  H^{1}_{(0)}(\Omega)$ and $b \in C^2([-1,1])$, the elliptic regularity theory entails that $\G_q f \in H^3(\Omega)$. In particular, there exists $C>0$ such that 
\begin{equation}
\label{H3}
\| \G_q f\|_{H^3(\Omega)} \leq C
\left( 
\norm{  \frac{b'(q)}{b(q)} \nabla q \cdot \nabla \G_q f}_{H^1(\Omega)}
+
\norm{ \frac{f}{b(q)} }_{H^1(\Omega)}
 \right)\!.
\end{equation}

\section{Proof of Theorem \ref{Goal_thm} - (A) - (B): 
Existence and Uniqueness of Weak Solutions}
\label{S-WEAK}

This section is devoted to show Theorem \ref{Goal_thm} - (A) - (B). The proof is divided in two parts.
\medskip

\textbf{Properties of weak solutions.} Let $\varphi$ be a global weak solution given by Theorem \ref{BB} - (1). First of all, since $\vphi \in L^2(0,T; H^2(\Omega))$ for any $T>0$, it easily follows that $\partial_\n \vphi=0$ almost everywhere on $\partial \Omega \times (0,\infty)$,  and $\mu =-\Delta \vphi+\Psi'(\vphi)$ almost everywhere in $\Omega \times (0,\infty)$. Since $\overline{\mu}= \overline{\Psi'(\varphi)}$ from \eqref{wCH2}, we can rewrite \eqref{wCH1} as
\begin{equation}
\label{mu&der}
\mu= - \G_{\vphi} \partial_t \vphi + \overline{\Psi' (\vphi)}, \quad \text{a.e. in } 
\Omega \times (0,\infty).
\end{equation}
Moreover, taking $v=1$ in \eqref{wCH1}, we derive the conservation of mass, namely 
\begin{equation}
\label{cons-mass}
\overline{\varphi}(t)=\overline{\varphi_0}=m, \quad \forall \, t \geq 0.
\end{equation}

Let us now define the functional $E_0:L^2(\Omega)\rightarrow \R $ given by
\begin{equation}
\label{E_0}
E_0( \varphi)= \frac12 \| \nabla \varphi\|_{L^2(\Omega)}^2
+ \int_{\Omega} F(\varphi)\, \d x,
\end{equation}
where $F$ is the convex part of $\Psi$ as in \eqref{f-log}.
The functional $E_0$ is proper, convex and lower-semicontinuous. Owing to \cite[Lemma 4.1]{RS}, we deduce that $t\mapsto E_0(\varphi(t))$ is absolutely continuous on $[0,\infty)$ and
$$
\frac{\d}{\d t} E_0(\varphi)=\l \partial_t \varphi, -\Delta \vphi+F'(\vphi) \r= \l\partial_t \varphi ,\mu+\Theta_0\varphi\r,
$$
almost everywhere on $(0,\infty)$. As a consequence, for any $T>0$, due to the boundedness of $F$ and $\varphi \in C([0,T]; L^2(\Omega))$, the Lebesgue theorem entails that
$
\int_{\Omega} F(\varphi(\cdot)) \ \d x \in C([0,T]),
$
which in turn gives $\varphi \in C([0,T]; H^1(\Omega))$.
Now, taking $v=\mu$ in \eqref{wCH1} and exploiting the standard chain rule in $L^2(0,T;H^1(\Omega))\cap H^1(0,T;H^1(\Omega)')$, we obtain
\begin{equation}
\label{eiCahn}
\ddt E(\varphi) + \int_{\Omega} b(\varphi) |\nabla \mu|^2 \, \d x =0,
\end{equation}
which implies the energy equality \eqref{EE}. 
Then, we deduce from \eqref{poincare} and \eqref{cons-mass} that
\begin{equation}
\label{H1-phi}
\sup_{t\geq 0} \| \varphi(t)\|_{H_{(m)}^1(\Omega)} \leq C,
\end{equation}
and
\begin{equation}
\label{mu-0inf}
\int_0^\infty \| \nabla \mu (s)\|_{L^2(\Omega)}^2 \, \d s
\leq C,
\end{equation}
for some constant $C$ depending on $E(\varphi_0)$ and $m$. The latter 
implies $\nabla \mu \in L^2(0,\infty; L^2(\Omega))$. By \eqref{equivalence of norm} and \eqref{mu&der}, we have 
\begin{equation}
\label{phit-mu}
\| \nabla \G \partial_t \varphi \|_{L^2(\Omega)} \leq C \| \nabla \mu\|_{L^2(\Omega)}.
\end{equation}
Then, \eqref{mu-0inf} yields
\begin{equation}
\label{phit-0inf}
\int_0^\infty \| \partial_t \varphi (s)\|_{ H^{-1}_{(0)}(\Omega)}^2 \, \d s
\leq C,
\end{equation}
which gives $\partial_t \vphi \in L^2(0,\infty;  H^{-1}_{(0)}(\Omega))$. 

Next, multiplying \eqref{wCH2} by $-\Delta \varphi$, integrating over $\Omega$ and exploiting the boundary condition on $\varphi$, we find
\begin{equation}
\label{muDeltaphi}
\| \Delta \vphi\|_{L^2(\Omega)}^2 + \int_{\Omega} F'(\vphi) \left( -\Delta \vphi \right) \, \d x
= \int_{\Omega} \nabla \mu \cdot \nabla \vphi\, \d x + \Theta_0 \| \nabla \vphi\|_{L^2(\Omega)}^2.
\end{equation}
It is well known that the second term on the left-hand side is non-negative by the convexity of $F$ (see, e.g., \cite[Appendix A]{CG}). Then, by \eqref{H1-phi}, it follows that
\begin{equation}
\label{H2-phi-1}
\| \Delta \vphi\|_{L^2(\Omega)}^2 \leq C \left( 1+ \| \nabla \mu\|_{L^2(\Omega)}\right)\!.
\end{equation}  
Thus, by the standard elliptic regularity theory, we have
\begin{equation}
\label{H2-phi-2}
\sup_{t \geq 0} \int_t^{t+1} \|\varphi (s)\|_{H^2(\Omega)}^4 \, \d s 
\leq C,
\end{equation}
which proves that $\varphi \in L_{\uloc}^4([0,\infty);H^2(\Omega))$. We proceed with the control of the total mass of $\mu$. We report the standard tool from \cite{MZ}: there exists a positive constant $\overline{C}$, depending only on $m=\overline{\varphi_0}$, such that
\begin{equation}
\label{MZ}
\int_\Omega| F'(\varphi)|\, \d x\leq \overline{C} 
\left|\int_\Omega F'(\varphi)(\varphi-\overline{\varphi} )\, \d x\right|+\overline{C},
\end{equation}
where $\overline{C} \to +\infty$ as $|\overline{\varphi}_0|\to 1$. 
Multiplying $\mu$ by $\varphi-\overline{\varphi}$ and integrating over $\Omega$, we easily deduce that
\begin{align*}
\| \nabla \vphi\|_{L^2(\Omega)}^2 +
\int_{\Omega} F'(\varphi) \left( \varphi-\overline{\varphi} \right) \, \d x
= \int_\Omega \left( \mu -\overline{\mu}\right) \vphi \, \d x 
+ \Theta_0 \int_\Omega \vphi (\vphi-\overline{\vphi}) \, \d x.
\end{align*}
By \eqref{poincare} and \eqref{H1-phi}, we get
\begin{align*}
\left|
\int_{\Omega} F'(\varphi) \left( \varphi-\overline{\varphi} \right) \, \d x
\right|
\leq C \|\nabla \mu \|_{L^2(\Omega)} + C,
\end{align*}
where the positive constant $C$ depends on $E(\varphi_0)$ and $m$.
Since
$
|\overline{\mu}| = \overline{\Psi'(\vphi)},
$
\eqref{MZ} and the above inequality imply that
\begin{equation}
\label{mediamu}
|\overline{\mu}|\leq C \left( 1+ \| \nabla \mu\|_{L^2(\Omega)}\right)\!.
\end{equation}
In light of \eqref{mu-0inf}, we conclude that $\mu \in L^2_{\uloc}([0,\infty);H^1(\Omega))$. Finally, noticing that
\begin{equation*}
\begin{cases}
-\Delta\varphi+F'(\varphi)=\mu^* \quad &\text{in }\Omega\\
\partial_{\n}\varphi=0\quad &\text{on }\partial\Omega,
\end{cases}
\end{equation*}
almost everywhere in time on $(0,\infty)$,
where $\mu^*=\mu+\Theta_0\varphi \in L^2_{\uloc}([0,\infty);H^1(\Omega))$, 
an application of \cite[Lemma A.4]{CG} yields
\begin{equation}
\label{W2p}
\| \vphi\|_{W^{2,p}(\Omega)}+ \| F'(\vphi)\|_{L^p(\Omega)} \leq C(p)\left( 1+ \| \mu\|_{H^1(\Omega)}\right)\!, \quad \forall \, p \in [2,\infty),
\end{equation}
which provides that $\vphi \in L^2_{\uloc}([0,\infty); W^{2,p}(\Omega))$ and $F'(\vphi)\in L^2_{\uloc}([0,\infty); L^{p}(\Omega))$, for any $p \in [2,\infty)$.
\medskip

\textbf{Uniqueness of weak solutions.} Let $\vphi_1$ and $\vphi_2$ be two weak solutions to problem \eqref{CH1}-\eqref{nCH-mu} with non-degenerate mobility \eqref{m-ndeg} as in Theorem \ref{Goal_thm} - part (A), associated with two initial data $\vphi_{1}^0$ and $\vphi_2^0$ such that $\overline{\vphi_{1}^0}=\overline{\vphi_{2}^0}$. We observe that \eqref{wCH1} and \eqref{wCH2} entail that 
\begin{equation}
\label{mu-G}
\mu_i= - \mathcal{G}_{\vphi_i} \partial_t \vphi_i + \overline{\Psi'(\vphi_i)}, \quad 
\text{ in } \Omega, \ \text{a.e. in } (0,\infty), \ i=1,2,
\end{equation}
where $\mu_1$ and $\mu_2$ are the chemical potentials corresponding to $\vphi_1$ and $\vphi_2$, respectively. 
We define $\Phi=\vphi_1-\vphi_2$, which solves
\begin{equation*}
-\Delta \Phi + \Psi'(\vphi_1)-\Psi'(\vphi_2)= \mu_1-\mu_2,  \quad 
\text{ in } \Omega, \ \text{a.e. in } (0,\infty).
\end{equation*}
We preliminary notice that $\overline{\Phi}\equiv 0$ by the conservation of mass. Multiplying the above equation by $\Phi$ and integrating over $\Omega$, we find
 \begin{equation*}
\norm{\nabla \Phi}_{L^2(\Omega)}^2 +  \int_\Omega \left( F'(\vphi_1)-F'(\vphi_2) \right)   \Phi \, \d x 
- \int_{\Omega} \left( \mu_1-\mu_2\right)  \Phi \, \d x= \Theta_0 \norm{\Phi}_{L^2(\Omega)}^2\!.
\end{equation*}
Thanks to \eqref{mu-G}, we rewrite the above as 
\begin{equation*}
\norm{\nabla \Phi}_{L^2(\Omega)}^2 +\int_\Omega \left( F'(\vphi_1)-F'(\vphi_2) \right)   \Phi \, \d x  
+ \int_\Omega \left( \G_{\vphi_1} \partial_t \vphi_1 - \G_{\vphi_2} \partial_t \vphi_2 \right)  \Phi \, \d x= \Theta_0 \norm{\Phi}_{L^2(\Omega)}^2\!.
\end{equation*}
We notice that
%\begin{equation*}
%\begin{split}
%(\G_{\vphi_1} \partial_t \vphi_1-\G_{\vphi_2} \partial_t \vphi_2, \Phi) 
%&= (\G_{\vphi_1} \partial_t \Phi, \Phi)+ ((\G_{\vphi_1}-\G_{\vphi_2}) \partial_t \vphi_2, \Phi). 
%%\\
%%&= \frac{1}{2} \ddt (\G_{\vphi_1} \Phi, \Phi) +\frac12 \l \partial_t \vphi_1, b'(\vphi_1) \nabla \G_{\vphi_1} \Phi \cdot \nabla \G_{\vphi_1} \Phi \r + ((\G_{\vphi_1}-\G_{\vphi_2}) \partial_t \vphi_2, \Phi).
%\end{split}
%\end{equation*}
\begin{equation*}
\begin{split}
 \int_\Omega \left( \G_{\vphi_1} \partial_t \vphi_1 - \G_{\vphi_2} \partial_t \vphi_2 \right)  \Phi \, \d x
&= \int_\Omega \G_{\vphi_1} \partial_t \Phi \,  \Phi \, \d x 
+ \int_\Omega (\G_{\vphi_1}-\G_{\vphi_2}) \partial_t \vphi_2 \,  \Phi \, \d x. 
%\\
%&= \frac{1}{2} \ddt (\G_{\vphi_1} \Phi, \Phi) +\frac12 \l \partial_t \vphi_1, b'(\vphi_1) \nabla \G_{\vphi_1} \Phi \cdot \nabla \G_{\vphi_1} \Phi \r + ((\G_{\vphi_1}-\G_{\vphi_2}) \partial_t \vphi_2, \Phi).
\end{split}
\end{equation*}
Next, we claim that 
%\begin{equation}
%\label{IP-time}
%\begin{split}
%(\G_{\vphi_1} \partial_t \Phi, \Phi)
%&= \ddt \frac{1}{2} (\G_{\vphi_1} \Phi, \Phi) 
%+\frac12 \left( \nabla \G \partial_t \vphi_1, b''(\vphi_1) \nabla \vphi_1 \nabla \G_{\vphi_1} \Phi \cdot \nabla \G_{\vphi_1} \Phi \right)
%\\[5pt]
%&\quad + \left( \nabla \G \partial_t \vphi_1, b'(\vphi_1) D^2 \G_{\vphi_1} \Phi \nabla \G_{\vphi_1} \Phi \right),
%\end{split}
%\end{equation}
\begin{equation}
\label{IP-time}
\begin{split}
\int_\Omega \G_{\vphi_1} \partial_t \Phi \,  \Phi \, \d x 
&= \ddt \frac{1}{2} (\G_{\vphi_1} \Phi, \Phi) 
+\frac12 \int_\Omega \nabla \G \partial_t \vphi_1 \cdot b''(\vphi_1) \nabla \vphi_1 \left| \nabla \G_{\vphi_1} \Phi \right|^2 \, \d x
\\[5pt]
&\quad +  \int_\Omega \nabla \G \partial_t \vphi_1 \cdot  b'(\vphi_1) \left( D^2 \G_{\vphi_1} \Phi \nabla \G_{\vphi_1} \Phi \right) \, \d x,
\end{split}
\end{equation}
almost everywhere in $(0,\infty)$. Here, $D^2 f$ is the Hessian of $f$.
We recall that 
$$
(\G_{\vphi_1} \Phi, \Phi)^\frac12= \norm{\sqrt{b(\vphi_1)}\nabla \G_{\vphi_1} \Phi}_{L^2(\Omega)}
$$ 
is a norm on $ H^{-1}_{(0)}(\Omega)$, that is equivalent to
$ \norm{\nabla \G \Phi}_{L^2(\Omega)}$.
Then, since $(F'(\vphi_1)-F'(\vphi_2), \varphi_1-\varphi_2) \geq 0$, we infer that
\begin{equation}\label{uniqueness estimate}
\begin{split}
&\ddt  \frac12 \norm{\sqrt{b(\vphi_1)} \nabla \G_{\vphi_1} \Phi}_{L^2(\Omega)}^2 + \norm{\nabla \Phi}_{L^2(\Omega)}^2 \leq  \Theta_0 \norm{\Phi}_{L^2(\Omega)}^2+ I_1 +I_2+ I_3,
\end{split}
\end{equation}
where 
\begin{align}
\label{I1}
I_1 &=- \frac12 \int_\Omega \nabla \G \partial_t \vphi_1 \cdot b''(\vphi_1) \nabla \vphi_1 \left| \nabla \G_{\vphi_1} \Phi \right|^2 \, \d x,
\\
\label{I2}
I_2 &=-  \int_\Omega \nabla \G \partial_t \vphi_1 \cdot  b'(\vphi_1) \left( D^2 \G_{\vphi_1} \Phi \nabla \G_{\vphi_1} \Phi \right) \, \d x,
%- \frac12 \left( \nabla \G \partial_t \vphi_1, b''(\vphi_1) \nabla \vphi_1 \nabla %\G_{\vphi_1} \Phi \cdot \nabla \G_{\vphi_1} \Phi \right)
%- \left( \nabla \G \partial_t \vphi_1, b'(\vphi_1) D^2 \G_{\vphi_1} \Phi \nabla %\G_{\vphi_1} \Phi \right)\!,
\end{align}
and
\begin{equation}
\label{I3}
I_3=-  \int_\Omega (\G_{\vphi_1}-\G_{\vphi_2}) \partial_t \vphi_2 \,  \Phi \, \d x.
\end{equation}
The rest of the proof consists of two parts: the justification of \eqref{IP-time} and the control of $I_1$, $I_2$ and $I_3$.
\medskip

\textbf{Proof of \eqref{IP-time}.} 
For any $\varepsilon \in (0,1)$, we consider the elliptic problem 
\begin{equation}
\label{ell-reg}
\begin{cases}
-\varepsilon \Delta \vphi_1^\varepsilon + \vphi_1^\varepsilon = \vphi_1 \quad & \text{in } \Omega,\\
\partial_\n \vphi_1^\varepsilon=0\quad &\text{on } \partial \Omega,
\end{cases}
\end{equation}
almost everywhere in $(0,\infty)$. Recalling that $\vphi_1 \in L^\infty(0,T; H^1(\Omega))\cap L^4(0,T; H^2(\Omega))$ with $\partial_t \varphi \in L^2(0,T;  H^{-1}_{(0)}(\Omega))$, for any $T>0$, it follows by the elliptic regularity theory that 
\begin{equation}
\label{vphieps}
\vphi_1^\varepsilon \in L^\infty(0,T; H^3(\Omega)), \quad  \partial_t \varphi_1^\varepsilon \in L^2(0,T;  H^{1}_{(0)}(\Omega)), 
\end{equation}
and $\partial_\n \vphi_1^\varepsilon =0$ on $\partial \Omega$  almost everywhere in $(0, T)$, for any $\varepsilon\in (0,1)$ and any $T>0$.
In addition, recalling that $\vphi_1 \in L^\infty(\Omega \times (0,T))$ with $|\vphi_1|<1$ almost everywhere in $\Omega \times (0,T)$, we also have  
\begin{equation}
\label{Linf-eps}
\vphi_1^\varepsilon \in L^\infty(\Omega \times (0,T)) \ : 
|\vphi_1^\varepsilon|<1 \quad \text{a.e. in } \Omega \times (0,T).
\end{equation}
Besides, it is easy to check that
\begin{align}
\label{eps-1}
\int_0^T \left\| \vphi_1^\varepsilon(s) \right\|_{L^2(\Omega)}^4 + \| \nabla \vphi_1^\varepsilon(s)\|_{L^2(\Omega)}^4 \, \d s 
&\leq
\int_0^T \| \vphi_1(s)\|_{L^2(\Omega)}^4 + \| \nabla \vphi_1(s)\|_{L^2(\Omega)}^4  \, \d s, 
\\
\label{eps-3}
\int_0^T \| \nabla \G \partial_t \vphi_1^\varepsilon (s)\|_{L^2(\Omega)}^2  \, \d s
&\leq
\int_0^T \| \nabla \G \partial_t \vphi_1 (s)\|_{L^2(\Omega)}^2 \, \d s,
\end{align}
and 
\begin{equation}
\label{eps-2}
\| \vphi_1^\varepsilon\|_{L^\infty(0,T;H^1(\Omega))}
\leq \| \vphi_1\|_{L^\infty(0,T;H^1(\Omega))}.
\end{equation}
Owing to the regularity of $\varphi_1^\varepsilon$, we compute the gradient of \eqref{ell-reg} and test it by $-\nabla \Delta \vphi_1^\varepsilon$. Exploiting the Neumann boundary condition on $\varphi_1^\varepsilon$ and $\vphi_1$, we find 
$$
\varepsilon \| \nabla \Delta \vphi_1^\varepsilon\|_{L^2(\Omega)}^2+
 \| \Delta \vphi_1^\varepsilon\|_{L^2(\Omega)}^2 = (\Delta \vphi_1,\Delta \vphi_1^\varepsilon),
$$
which gives 
\begin{equation}
\label{eps-4}
\int_0^T \| \Delta \vphi_1^\varepsilon(s)\|_{L^2(\Omega)}^4  \, \d s\leq
\int_0^T \| \Delta \vphi_1 (s)\|_{L^2(\Omega)}^4 \, \d s.
\end{equation}
Hence, by the above estimates, we derive that, up to a subsequence, $\varphi_1^\varepsilon \rightharpoonup \varphi_1$ weakly in $L^4(0,T;H^2(\Omega))$ and weak-star in $L^\infty(\Omega \times (0,T))$, as well as 
$\partial_t \varphi_1^\varepsilon \rightharpoonup \partial_t \varphi_1$ weakly in $L^2(0,T;H^{-1}_{(0)}(\Omega))$. In order to recover the strong convergence, we report the following result which is based on the Clarkson inequality: 
\begin{lemma}
\label{lemmaws}
Let $X$ be a Banach space and let $p \in [2, \infty)$. Assume that  $u_n \rightharpoonup u$ weakly in $L^p(0,T; X)$ and $\limsup_{n \to \infty} \| u_n\|_{L^p(0,T; X)}\leq \| u\|_{L^p(0,T; X)}$. Then, $u_n \to u$ strongly in $L^p(0,T; X)$.
\end{lemma}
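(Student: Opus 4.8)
The plan is to derive strong convergence from two ingredients: the weak sequential lower semicontinuity of the norm, and the second Clarkson inequality in $L^p(0,T;X)$ for $p\in[2,\infty)$. First I would note that $u_n \rightharpoonup u$ weakly in $L^p(0,T;X)$ forces $\norm{u}_{L^p(0,T;X)} \le \liminf_{n\to\infty}\norm{u_n}_{L^p(0,T;X)}$, which together with the hypothesis $\limsup_{n\to\infty}\norm{u_n}_{L^p(0,T;X)}\le \norm{u}_{L^p(0,T;X)}$ gives the convergence of norms $\norm{u_n}_{L^p(0,T;X)} \to \norm{u}_{L^p(0,T;X)}$.

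Next I would introduce the midpoint sequence $w_n := \tfrac{u_n+u}{2}$, which still satisfies $w_n \rightharpoonup u$ weakly, so that $\liminf_{n\to\infty}\norm{w_n}_{L^p(0,T;X)}^p \ge \norm{u}_{L^p(0,T;X)}^p$. Applying Clarkson's second inequality to the pair $(u_n,u)$,
\[
\norm{w_n}_{L^p(0,T;X)}^p + \left\Vert \tfrac{u_n-u}{2}\right\Vert_{L^p(0,T;X)}^p \le \tfrac12\Big( \norm{u_n}_{L^p(0,T;X)}^p + \norm{u}_{L^p(0,T;X)}^p\Big),
\]
and taking $\limsup$ in $n$ while using the convergence of norms and the elementary bound $\limsup_n(a_n+b_n)\ge \liminf_n a_n + \limsup_n b_n$, one gets
\[
\norm{u}_{L^p(0,T;X)}^p + \limsup_{n\to\infty}\left\Vert \tfrac{u_n-u}{2}\right\Vert_{L^p(0,T;X)}^p \le \norm{u}_{L^p(0,T;X)}^p,
\]
so that $\limsup_{n\to\infty}\norm{u_n-u}_{L^p(0,T;X)}=0$, i.e.\ $u_n \to u$ strongly in $L^p(0,T;X)$.

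The only delicate point — and the step I would treat most carefully — is the validity of the vector-valued Clarkson inequality. In $L^p(0,T;X)$ with $p\ge 2$ it reduces, after integration in $t$, to the pointwise estimate $\left\Vert \tfrac{a+b}{2}\right\Vert_X^p + \left\Vert \tfrac{a-b}{2}\right\Vert_X^p \le \tfrac12\big(\norm{a}_X^p+\norm{b}_X^p\big)$ for $a,b\in X$, which holds whenever $X$ is $2$-uniformly convex; in particular it is classical when $X$ is a Hilbert space, the only case invoked in the sequel (namely $X=H^2(\Omega)$ and $X=H^{-1}_{(0)}(\Omega)$), where it follows from the parallelogram law combined with the scalar convexity inequality $|x+y|^p+|x-y|^p\le 2^{p-1}(|x|^p+|y|^p)$. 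I would record this remark explicitly so that each subsequent application of the lemma is covered.
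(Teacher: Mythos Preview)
Your argument is correct and matches the route the paper indicates: the lemma is stated there without proof, only with the remark that it ``is based on the Clarkson inequality,'' and your combination of weak lower semicontinuity of the norm (to upgrade the $\limsup$ hypothesis to norm convergence) with the first Clarkson inequality for $p\ge 2$ is precisely that argument. Your additional caveat is well taken and worth keeping: the pointwise inequality $\|\tfrac{a+b}{2}\|_X^p+\|\tfrac{a-b}{2}\|_X^p\le \tfrac12(\|a\|_X^p+\|b\|_X^p)$ does \emph{not} hold for an arbitrary Banach space $X$, so the lemma as stated is in fact too general; it is valid, however, when $X$ is a Hilbert space (via the parallelogram identity together with $u^p+v^p\le (u^2+v^2)^{p/2}\le \tfrac12(A^p+B^p)$ for $p\ge 2$), and the paper only invokes it with $X=H^2(\Omega)$ and $X=H^{-1}_{(0)}(\Omega)$, so your restriction exactly covers the needed applications.
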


\noindent
In light of \eqref{eps-3} and \eqref{eps-4}, Lemma \ref{lemmaws} implies that
\begin{equation}
\label{conv-eps}
\vphi_1^\varepsilon \to \vphi_1 \quad \text{strongly in } L^4(0,T;H^2(\Omega)), \quad
\partial_t \vphi_1^\varepsilon \to \partial_t \vphi_1 \quad \text{strongly in } L^2(0,T;  H^{-1}_{(0)}(\Omega)).
\end{equation}
In particular, up to a subsequence, we have
\begin{equation}
\label{eps-point}
\vphi_1^\varepsilon \to  \vphi_1, \quad \nabla \vphi_1^\varepsilon \to \nabla \vphi_1 \quad  \text{a.e. in } \Omega \times (0,T).
\end{equation}
Thanks to \eqref{Linf-eps}, we now define the sequence $\G_{\vphi_1^\varepsilon}\Phi \in L^\infty(0,T;H^1(\Omega))$.  It easily follows from \eqref{eps-2} that 
\begin{equation}
\label{G-H1}
\norm{  \G_{\vphi_1^\varepsilon}\Phi }_{L^\infty(0,T; H^{1}_{(0)}(\Omega))}\leq C,
\end{equation}
where $C$ is  independent of $\varepsilon$. In light of \eqref{eps-2} and \eqref{G-H1}, an application of \eqref{H2} entails that 
$$
\norm{ \G_{\vphi_1^\varepsilon}\Phi }_{H^2(\Omega)}
\leq C \left( 1+ \| \varphi_1^\varepsilon \|_{H^2(\Omega)}\right)\!,
$$
for some constant $C$ independent of $\varepsilon$. Thus, we infer from \eqref{eps-2} and \eqref{eps-4} that 
\begin{equation}
\label{G-H2}
\int_0^T \left\| \G_{\vphi_1^\varepsilon}\Phi (s) \right\|_{H^2(\Omega)}^4 \, \d s \leq C,
\end{equation}
namely $\G_{\vphi_1^\varepsilon}\Phi$ is uniformly bounded in $L^4(0,T;H^2(\Omega))$. Furthermore, by \eqref{W24}, we also learn that 
\begin{equation}
\norm{ \G_{\vphi_1^\varepsilon}\Phi }_{W^{2,4}(\Omega)}
\leq C \left( 1+ \norm{ \varphi_1^\varepsilon }_{H^2(\Omega)}^\frac34 
\norm{\G_{\vphi_1^\varepsilon}\Phi }_{H^2(\Omega)}^\frac34
\right)\!.
\end{equation}
Hence, integrating over $(0,T)$, we obtain
\begin{equation}
\label{G-W24}
\int_0^T \| \G_{\vphi_1^\varepsilon}\Phi (s)\|_{W^{2,4}(\Omega)}^\frac83 \, \d s 
\leq CT + C\int_0^T \norm{ \varphi_1^\varepsilon (s) }_{H^2(\Omega)}^4+
\norm{ \G_{\vphi_1^\varepsilon}\Phi (s) }_{H^2(\Omega)}^4 \, \d s\leq C,
\end{equation}
for some $C$ independent of $\varepsilon$. The latter implies that $\G_{\vphi_1^\varepsilon}\Phi$ is uniformly bounded in $L^\frac83(0,T;W^{2,4}(\Omega))$.

Now, we study the convergence properties of the operator $\G_{\vphi_1^\varepsilon}$ as $\varepsilon \to 0$. Let $f \in L^2(0,T;  H^{-1}_{(0)}(\Omega))$. By definition \eqref{def G_q}, we know that 
$$
\left( b(\vphi_1) \nabla \G_{\vphi_1} f, \nabla v \right)=
\left( b(\vphi_1^\varepsilon) \nabla \G_{\vphi_1^\varepsilon} f, \nabla v \right),
\quad \forall \, v \in  H^{1}_{(0)}(\Omega), 
$$
which gives
$$
\left( b(\vphi_1) \nabla \left( \G_{\vphi_1} f- \G_{\vphi_1^\varepsilon} f \right), \nabla v \right)=
\left( \left( b(\vphi_1^\varepsilon) - b(\vphi_1)\right) \nabla \G_{\vphi_1^\varepsilon} f, \nabla v \right),
\quad \forall \, v \in  H^{1}_{(0)}(\Omega).
$$
Therefore, we have 
\begin{align*}
\int_0^T \norm{ \nabla \left( \G_{\vphi_1} f- \G_{\vphi_1^\varepsilon} f \right) }_{L^2(\Omega)}^2 \, \d s
&\leq \frac{1}{b_m} \int_0^T \norm{ \left( b(\vphi_1^\varepsilon) - b(\vphi_1)\right) \nabla \G_{\vphi_1^\varepsilon} f }_{L^2(\Omega)}^2 \, \d s
\\
&\leq \frac{1}{b_m} \int_0^T \norm{ b(\vphi_1^\varepsilon) - b(\vphi_1) }_{L^\infty(\Omega)}^2 \norm{ \nabla \G_{\vphi_1^\varepsilon} f }_{L^2(\Omega)}^2 \, \d s.
\end{align*}
We aim to use the Lebesgue convergence theorem on the right-hand side.
We observe from \eqref{equivalence of norm} that 
$$
\norm{ b(\vphi_1^\varepsilon) - b(\vphi_1) }_{L^\infty(\Omega)}^2 
\norm{ \nabla \G_{\vphi_1^\varepsilon} f }_{L^2(\Omega)}^2  
\leq
\frac{4 b_M}{b_m}  \norm{ \nabla \G f }_{L^2(\Omega)}^2, 
$$
where $\| \nabla \G f \|_{L^2(\Omega)}^2 \in L^1(0,T)$. 
On the other hand,
$$
\norm{ b(\vphi_1^\varepsilon) - b(\vphi_1) }_{L^\infty(\Omega)}^2 
\leq C \norm{ \vphi_1^\varepsilon - \vphi_1 }_{L^\infty(\Omega)}^2 
\leq C  \norm{ \vphi_1^\varepsilon - \vphi_1}_{H^2(\Omega)}^2 
$$ 
almost everywhere in $(0,T)$. In light of \eqref{conv-eps}, up to a subsequence, we obtain that 
$$
\norm{ b(\vphi_1^\varepsilon) - b(\vphi_1) }_{L^\infty(\Omega)}^2 
\norm{ \nabla \G_{\vphi_1^\varepsilon} f }_{L^2(\Omega)}^2 
\leq 
C  \| \vphi_1^\varepsilon - \vphi_1\|_{H^2(\Omega)}^2 
\| \nabla \G f \|_{L^2(\Omega)}^2 
\to 0
$$
 almost everywhere in $(0,T)$. 
 Thus, up to a subsequence, we conclude that 
\begin{equation*}
\nabla \G_{\vphi_1^\varepsilon} f \to \nabla \G_{\vphi_1} f 
\quad \text{strongly in } L^2(0,T; L^2(\Omega)).
\end{equation*}
In particular, for $f=\partial_t \Phi$ and $f= \Phi$, we have
\begin{equation}
\label{Gt-sH1}
\nabla \G_{\vphi_1^\varepsilon} \partial_t \Phi \to \nabla \G_{\vphi_1} \partial_t \Phi 
\quad \text{strongly in } L^2(0,T; L^2(\Omega)),
\end{equation}
and 
\begin{equation}
\label{G-sH1}
\nabla \G_{\vphi_1^\varepsilon} \Phi \to \nabla \G_{\vphi_1} \Phi 
\quad \text{strongly in } L^2(0,T; L^2(\Omega)),
\end{equation}
respectively. The latter means that $\G_{\vphi_1^\varepsilon} \Phi \to \G_{\vphi_1} \Phi$ strongly in $L^2(0,T;  H^{1}_{(0)}(\Omega))$. Moreover, by the above uniform estimates \eqref{G-H2} and \eqref{G-W24}, we conclude that 
\begin{equation}
\label{conv-G-H2}
\G_{\vphi_1^\varepsilon} \Phi  \rightharpoonup \G_{\vphi_1} \Phi \quad 
\text{weakly in } L^4(0,T;H^2(\Omega)),
\end{equation}
and 
\begin{equation}
\label{conv-G-W24}
\G_{\vphi_1^\varepsilon} \Phi  \rightharpoonup \G_{\vphi_1} \Phi \quad 
\text{weakly in } L^\frac83(0,T;W^{2,4}(\Omega)).
\end{equation}

Next, thanks to \eqref{vphieps}, it is immediate to see that (cf. \cite[Eqn. (2.22)]{BB1999}), for any $\eta \in \mathcal{D}(0,T)$,
\begin{equation}
\label{IP-time-eps}
\begin{split}
\int_0^T (\G_{\vphi_1^\varepsilon} \partial_t \Phi, \Phi) \, \eta \, \d s
&= -\frac{1}{2} \int_0^T (\G_{\vphi_1^\varepsilon} \Phi, \Phi) \, \partial_t \eta \, \d s 
\\
&\quad +\frac12 \int_0^T  \left( \partial_t \vphi_1^\varepsilon, b'(\vphi_1^\varepsilon) \nabla \G_{\vphi_1^\varepsilon} \Phi \cdot \nabla \G_{\vphi_1^\varepsilon} \Phi \right) \, \eta \, \d s.
\end{split}
\end{equation}
We aim to pass to the limit as $\varepsilon \to 0$ in \eqref{IP-time-eps}.
First, we easily conclude from \eqref{Gt-sH1} and \eqref{G-sH1} that
\begin{equation}
\lim_{\varepsilon\to 0} \int_0^T (\G_{\vphi_1^\varepsilon} \partial_t \Phi, \Phi) \, \eta \, \d s
=  \int_0^T (\G_{\vphi_1} \partial_t \Phi, \Phi) \, \eta \, \d s
\end{equation}
and
\begin{equation}
\lim_{\varepsilon \to 0}
-\frac{1}{2} \int_0^T (\G_{\vphi_1^\varepsilon} \Phi, \Phi) \, \partial_t \eta \, \d s
=
 -\frac{1}{2} \int_0^T (\G_{\vphi_1} \Phi, \Phi) \, \partial_t \eta \, \d s.
\end{equation}
%We are left to show that 
%\begin{equation}
%\lim_{\varepsilon\to 0} \frac12 \int_0^T  \left( \partial_t \vphi_1^\varepsilon, b'(\vphi_1^\varepsilon) \nabla \G_{\vphi_1^\varepsilon} \Phi \cdot \nabla \G_{\vphi_1^\varepsilon} \Phi \right) \, \eta \, \d s 
%=
%\frac12 \int_0^T  \l \partial_t \vphi_1, b'(\vphi_1) \nabla \G_{\vphi_1} \Phi \cdot \nabla \G_{\vphi_1} \Phi \r  \, \eta \, \d s.
%\end{equation}
%We observe that 
%\begin{align*}
%\frac12 \int_0^T  \left( \partial_t \vphi_1^\varepsilon, b'(\vphi_1^\varepsilon) |\nabla \G_{\vphi_1^\varepsilon} \Phi |^2 \right) \, \eta \, \d s 
%=
%\frac12 \int_0^T  \left( \nabla \G \partial_t \vphi_1^\varepsilon, \nabla \left( b'(\vphi_1^\varepsilon) |\nabla \G_{\vphi_1^\varepsilon} \Phi|^2\right) \right)  \, \eta \, \d s.
%\end{align*}
In order to handle the last term on the right-hand side of \eqref{IP-time-eps}, we first show that 
\begin{equation}
\label{b-V}
\nabla \left( b'(\vphi_1^\varepsilon) |\nabla \G_{\vphi_1^\varepsilon} \Phi|^2\right)  \text{ is uniformly bounded in } L^2(0,T;L^2(\Omega)) \text{ w.r.t. }\varepsilon.
\end{equation} 
In fact, by \eqref{GN}, we have
\begin{align*}
\left\| \nabla \left( b'(\vphi_1^\varepsilon) |\nabla \G_{\vphi_1^\varepsilon} \Phi|^2\right) \right\|_{L^2(\Omega)}
&\leq 
\norm{ b''(\vphi_1^\varepsilon)  \nabla \vphi_1^\varepsilon |\nabla \G_{\vphi_1^\varepsilon} \Phi |^2}_{L^2(\Omega)} 
+ 2 \norm{ b'(\vphi_1^\varepsilon) D^2 \G_{\vphi_1^\varepsilon} \Phi \nabla \G_{\vphi_1^\varepsilon} \Phi }_{L^2(\Omega)}
\\
&\leq 
C \norm{ \nabla \vphi_1^\varepsilon }_{L^6(\Omega)} 
\norm{ \nabla \G_{\vphi_1^\varepsilon} \Phi }_{L^6(\Omega)}^2 
+ C  \norm{ D^2 \G_{\vphi_1^\varepsilon} \Phi  }_{L^4(\Omega)} 
\norm{ \nabla \G_{\vphi_1^\varepsilon} \Phi }_{L^4(\Omega)}
\\
&\leq 
C \norm{ \nabla \vphi_1^\varepsilon }_{L^2(\Omega)}^\frac13 
\norm{ \nabla \vphi_1^\varepsilon }_{H^1(\Omega)}^\frac23 
\norm{ \nabla \G_{\vphi_1^\varepsilon} \Phi }_{L^2(\Omega)}^\frac23
\norm{ \nabla \G_{\vphi_1^\varepsilon} \Phi }_{H^1(\Omega)}^\frac43\\
&\quad
+ C  \norm{ D^2 \G_{\vphi_1^\varepsilon} \Phi  }_{L^4(\Omega)} 
\norm{ \nabla \G_{\vphi_1^\varepsilon} \Phi }_{L^2(\Omega)}^\frac12
\norm{ \nabla \G_{\vphi_1^\varepsilon} \Phi }_{H^1(\Omega)}^\frac12.
\end{align*}
Thanks to \eqref{eps-2} and \eqref{G-H1}, it follows that
\begin{align*}
\left\| \nabla \left( b'(\vphi_1^\varepsilon) |\nabla \G_{\vphi_1^\varepsilon} \Phi|^2\right) \right\|_{L^2(\Omega)}
&\leq 
C
\norm{ \nabla \vphi_1^\varepsilon }_{H^1(\Omega)}^\frac23 
\norm{\nabla \G_{\vphi_1^\varepsilon} \Phi }_{H^1(\Omega)}^\frac43
+ C  \norm{ D^2 \G_{\vphi_1^\varepsilon} \Phi  }_{L^4(\Omega)} 
\norm{ \nabla \G_{\vphi_1^\varepsilon} \Phi }_{H^1(\Omega)}^\frac12
\\
&\leq C \left\| \nabla \vphi_1^\varepsilon \right\|_{H^1(\Omega)}^2
+C \left\| \nabla \G_{\vphi_1^\varepsilon} \Phi \right\|_{H^1(\Omega)}^2
+ C  \norm{ D^2 \G_{\vphi_1^\varepsilon} \Phi  }_{L^4(\Omega)}^\frac43,
\end{align*}
for some constant $C$ independent of $\varepsilon$.
Recalling that $\G_{\vphi_1^\varepsilon}\Phi$ is uniformly bounded in $L^4(0,T;H^2(\Omega))\cap L^\frac83(0,T;W^{2,4}(\Omega))$, and $ \varphi_1^\varepsilon$ is uniformly bounded in $L^4(0,T;H^2(\Omega))$, we deduce the desired claim \eqref{b-V}. Hence, by \eqref{conv-eps} and \eqref{b-V}, we infer that
\begin{align}
&\lim_{\varepsilon\to 0} \frac12 \int_0^T  \left( \partial_t \vphi_1^\varepsilon, b'(\vphi_1^\varepsilon) \nabla \G_{\vphi_1^\varepsilon} \Phi \cdot \nabla \G_{\vphi_1^\varepsilon} \Phi \right) \, \eta \, \d s 
\notag 
\\
&=
\lim_{\varepsilon\to 0} 
\frac12 \int_0^T  \left( \nabla \G \partial_t \vphi_1, \nabla \left( b'(\vphi_1^\varepsilon) |\nabla \G_{\vphi_1^\varepsilon} \Phi|^2\right) \right)  \, \eta \, \d s
\notag \\
&\quad +
\lim_{\varepsilon\to 0} 
\frac12 \int_0^T  \left( \nabla \G \left(\partial_t \vphi_1^\varepsilon -\partial_t \vphi_1\right), \nabla \left( b'(\vphi_1^\varepsilon) |\nabla \G_{\vphi_1^\varepsilon} \Phi|^2\right) \right)  \, \eta \, \d s 
\notag
\\
&=
 \lim_{\varepsilon\to 0} 
\frac12 \int_0^T  \left( \nabla \G \partial_t \vphi_1,  b''(\vphi_1^\varepsilon)
\nabla \vphi_1^\varepsilon  |\nabla \G_{\vphi_1^\varepsilon} \Phi|^2 
+  2 b'(\vphi_1^\varepsilon) D^2 \G_{\vphi_1^\varepsilon} \Phi \nabla \G_{\vphi_1^\varepsilon} \Phi 
\right)   \, \eta \, \d s.
\label{last-lim}
\end{align}
By the above computations, we know that $b''(\vphi_1^\varepsilon)
\nabla \vphi_1^\varepsilon  |\nabla \G_{\vphi_1^\varepsilon} \Phi|^2 
$ and $ 2 b'(\vphi_1^\varepsilon) D^2 \G_{\vphi_1^\varepsilon} \Phi \nabla \G_{\vphi_1^\varepsilon} \Phi$ are uniformly bounded in $L^2(0,T;L^2(\Omega))$ with respect to $\varepsilon$. Then, up to a subsequence, we have
\begin{equation}
\label{g1}
b''(\vphi_1^\varepsilon)
\nabla \vphi_1^\varepsilon  |\nabla \G_{\vphi_1^\varepsilon} \Phi|^2 
\rightharpoonup
g_1  \quad
\text{weakly in } L^2(0,T;L^2(\Omega)),
\end{equation}
and 
\begin{equation}
\label{g2}
2 b'(\vphi_1^\varepsilon) D^2 \G_{\vphi_1^\varepsilon} \Phi \nabla \G_{\vphi_1^\varepsilon} \Phi
\rightharpoonup g_2 \quad 
\text{weakly in } L^2(0,T;L^2(\Omega)).
\end{equation}
We are left to identify the weak limits $g_1$ and $g_2$. First, by interpolation, we observe that 
$$
\nabla \vphi_1^\varepsilon \to \nabla \vphi_1 \quad \text{strongly in } L^8(0,T;L^4(\Omega)).
$$ 
Besides, 
$\nabla \G_{\vphi_1^\varepsilon} \Phi \rightarrow \nabla \G_{\vphi_1} \Phi $ strongly in $L^2(0,T;L^2(\Omega))$ as in \eqref{G-sH1}.
Also, it follows from \eqref{conv-G-H2} that
$$
\nabla \G_{\vphi_1^\varepsilon} \Phi \rightharpoonup \nabla \G_{\vphi_1} \Phi \quad \text{weakly in } L^4(0,T;L^6(\Omega)).
$$ 
Finally, since  $b'' \in C([-1,1])$, we learn from \eqref{eps-point} that $b''(\vphi_1^\varepsilon) \to b''(\vphi_1)$ strongly in $L^8(0,T;L^{12}(\Omega))$. Thus, we infer that 
$$
b''(\vphi_1^\varepsilon)
\nabla \vphi_1^\varepsilon  |\nabla \G_{\vphi_1^\varepsilon} \Phi|^2 
\rightharpoonup b''(\vphi_1)
\nabla \vphi_1  |\nabla \G_{\vphi_1} \Phi|^2 
\quad \text{weakly in } L^1(0,T;L^1(\Omega)),
$$ 
which entails that 
$$
g_1=  b''(\vphi_1)
\nabla \vphi_1  |\nabla \G_{\vphi_1} \Phi|^2.
$$ 
In a similar matter, owing to 
\eqref{G-sH1}, \eqref{conv-G-W24}, and observing that $b'(\vphi_1^\varepsilon) \to b'(\vphi_1)$ strongly in $L^8(0,T;L^4(\Omega))$, we obtain
$$
2 b'(\vphi_1^\varepsilon) D^2 \G_{\vphi_1^\varepsilon} \Phi \nabla \G_{\vphi_1^\varepsilon} \Phi \rightharpoonup 2 b'(\vphi_1) D^2 \G_{\vphi_1} \Phi \nabla \G_{\vphi_1} \Phi
\quad \text{weakly in } L^1(0,T;L^1(\Omega)).
$$ 
This allows us to conclude that 
$$
g_2= 2 b'(\vphi_1) D^2 \G_{\vphi_1} \Phi \nabla \G_{\vphi_1} \Phi.
$$ 
Therefore, by exploiting \eqref{g1} and \eqref{g2} in \eqref{last-lim}, we deduce that 
\begin{equation}
\begin{split}
\int_0^T (\G_{\vphi_1} \partial_t \Phi, \Phi) \, \eta \, \d s
&= -\frac{1}{2} \int_0^T (\G_{\vphi_1} \Phi, \Phi) \, \partial_t \eta \, \d s\\
& \quad 
+\frac12 \int_0^T \left( \int_\Omega  \nabla \G \partial_t \vphi_1 \cdot b''(\vphi_1)\nabla \vphi_1 | \nabla \G_{\vphi_1} \Phi |^2 \, \d x \right) \, \eta \, \d s
\\
&\quad 
+ \int_0^T \left( \int_\Omega \nabla \G \partial_t \vphi_1 \cdot b'(\vphi_1) \left(D^2 \G_{\vphi_1} \Phi \nabla \G_{\vphi_1} \Phi \right) \, \d x\right) \, \eta \, \d s
\end{split}
\end{equation}
for any $\eta \in \mathcal{D}(0,T)$. The latter implies the desired conclusion \eqref{IP-time}.

\bigskip

\textbf{Estimates of the nonlinear terms.} In the rest of the proof, we denote by $C$ a generic positive constant which depends on the parameters of the system, the total mass $m$ and the initial free energy $E(\varphi_0)$.

Concerning $I_1$, we have
\begin{equation*}
\begin{split}
|I_1|
&\leq C  \norm{\partial_t \vphi_1}_{ H^{-1}_{(0)}(\Omega)} 
\norm{ b''(\vphi_1)  \nabla \vphi_1 |\nabla \G_{\vphi_1} \Phi |^2}_{L^2(\Omega)}\!.
\end{split}
\end{equation*}
Exploiting \eqref{H2} with $q=\varphi_1$ and $f=\Phi$, and owing to \eqref{inter-2}, the following inequality holds 
\begin{equation}
\label{H2-Phi}
\norm{ \mathcal{G}_{\varphi_1} \Phi }_{H^2(\Omega)}
\leq C \left( \norm{ \nabla \varphi_1 }_{L^2(\Omega)}
\norm{\varphi_1 }_{H^2(\Omega)} 
 \norm{\nabla \mathcal{G}_{\varphi_1} \Phi }_{L^2(\Omega)}+
 \norm{ \nabla \mathcal{G}_{\varphi_1} \Phi }_{L^2(\Omega)}^\frac12 
 \norm{ \nabla \Phi }_{L^2(\Omega)}^\frac12\right)\!.
\end{equation}
%
%\begin{equation}
%\label{H2-Phi}
%\begin{split}
%\norm{ \mathcal{G}_{\varphi_1} \Phi }_{H^2(\Omega)}
%&\leq C \norm{ \nabla \varphi_1 }_{L^2(\Omega)}
%\norm{\varphi_1 }_{H^2(\Omega)} 
% \norm{\nabla \mathcal{G}_{\varphi_1} \Phi }_{L^2(\Omega)}
% \\
% &\quad + C \norm{ \nabla \mathcal{G}_{\varphi_1} \Phi }_{L^2(\Omega)}^\frac12 
% \norm{ \nabla \Phi }_{L^2(\Omega)}^\frac12\!.
% \end{split}
%\end{equation}
By \eqref{GN}, \eqref{H1-phi} and \eqref{H2-Phi}, we infer that
\begin{align}
|I_1| 
&\leq 
C \norm{\partial_t \vphi_1}_{H^{-1}_{(0)}(\Omega)} 
\norm{\nabla \vphi_1}_{L^6(\Omega)}
\norm{\nabla \G_{\vphi_1} \Phi}_{L^6(\Omega)}^2
\notag
\\
&\leq 
C \norm{\partial_t \vphi_1}_{H^{-1}_{(0)}(\Omega)} 
\norm{\nabla \vphi_1}_{L^2(\Omega)}^\frac13 \norm{\vphi_1}_{H^2(\Omega)}^\frac23
\norm{\nabla \G_{\vphi_1} \Phi}_{L^2(\Omega)}^\frac23 
\norm{\G_{\vphi_1} \Phi}_{H^2(\Omega)}^\frac43
\notag
\\
&\leq C \norm{\partial_t \vphi_1}_{H^{-1}_{(0)}(\Omega)} 
 \norm{\vphi_1}_{H^2(\Omega)}^\frac23
\norm{\nabla \G_{\vphi_1} \Phi}_{L^2(\Omega)}^\frac23 
\notag
\\
&\quad \times
\left( \| \vphi_1\|_{H^2(\Omega)} \norm{\nabla \G_{\vphi_1} \Phi}_{L^2(\Omega)} + \norm{\nabla \G_{\vphi_1} \Phi}_{L^2(\Omega)}^\frac12 
\| \nabla \Phi\|_{L^2(\Omega)}^\frac12 \right)^\frac43
\notag
\\
& \leq 
C \norm{\partial_t \vphi_1}_{H^{-1}_{(0)}(\Omega)} 
 \norm{\varphi_1}_{H^2(\Omega)}^2
\norm{\nabla \G_{\vphi_1} \Phi}_{L^2(\Omega)}^2
\notag
\\
&\quad 
+
C \norm{\partial_t \vphi_1}_{H^{-1}_{(0)}(\Omega)} 
 \norm{\vphi_1}_{H^2(\Omega)}^\frac23
\norm{\nabla \G_{\vphi_1} \Phi}_{L^2(\Omega)}^\frac43 
 \| \nabla \Phi\|_{L^2(\Omega)}^\frac23
 \notag
 \\
& \leq \frac18 \| \nabla \Phi\|_{L^2(\Omega)}^2 
 + C  \left( \norm{\partial_t \vphi_1}_{H^{-1}_{(0)}(\Omega)}  
 \norm{\vphi_1}_{H^2(\Omega)}^2 + \norm{\partial_t \vphi_1}_{H^{-1}_{(0)}(\Omega)}^\frac32  
 \norm{\vphi_1}_{H^2(\Omega)}\right) \norm{\nabla \G_{\vphi_1} \Phi}_{L^2(\Omega)}^2
 \notag
 \\
 \label{I1-f}
& \leq \frac18 \| \nabla \Phi\|_{L^2(\Omega)}^2 
 + C \left( \norm{\partial_t \vphi_1}_{H^{-1}_{(0)}(\Omega)}^2+ 
 \norm{\vphi_1}_{H^2(\Omega)}^4 \right)\norm{\nabla \G_{\vphi_1} \Phi}_{L^2(\Omega)}^2\!.
\end{align}
We will now deal with $I_2$. By \eqref{GN} and \eqref{H2-Phi}, we first observe that
\begin{align*}
 \norm{ \nabla \G_{\varphi_1} \Phi }_{L^4(\Omega)}  
  &\leq  C\norm{ \nabla \G_{\varphi_1} \Phi }_{L^2(\Omega)}^\frac12 
  \norm{ \G_{\varphi_1} \Phi}_{H^2(\Omega)}^\frac12
  \\
  &\leq C \left( \| \nabla \varphi_1\|_{L^2(\Omega)}^\frac12 
  \|  \varphi_1\|_{H^2(\Omega)}^\frac12
  \norm{ \nabla \mathcal{G}_{\varphi_1} \Phi}_{L^2(\Omega)}+
 \norm{ \nabla \mathcal{G}_{\varphi_1} \Phi }_{L^2(\Omega)}^\frac34 
 \norm{ \nabla \Phi }_{L^2(\Omega)}^\frac14\right)\!.
\end{align*}
Since $\varphi_1 \in L^\infty(0,\infty;H^1(\Omega))$, we obtain
\begin{equation}
\label{W14-2}
 \norm{ \nabla \G_{\vphi_1} \Phi }_{L^4(\Omega)}  
  \leq C \left(  
  \| \vphi_1\|_{H^2(\Omega)}^\frac12 
  \norm{ \nabla \mathcal{G}_{\vphi_1} \Phi }_{L^2(\Omega)}+
 \norm{ \nabla \mathcal{G}_{\vphi_1} \Phi }_{L^2(\Omega)}^\frac34 
 \norm{ \nabla \Phi }_{L^2(\Omega)}^\frac14\right)\!.
\end{equation}
%\begin{equation}
%\label{W14-2}
%\begin{split}
% \norm{ \nabla \G_{\vphi_1} \Phi }_{L^4(\Omega)}  
%  &\leq C
%  \| \vphi_1\|_{H^2(\Omega)}^\frac12 
%  \norm{ \nabla \mathcal{G}_{\vphi_1} \Phi }_{L^2(\Omega)}
%  \\
%  &\quad +
%C \norm{ \nabla \mathcal{G}_{\vphi_1} \Phi }_{L^2(\Omega)}^\frac34 
% \norm{ \nabla \Phi }_{L^2(\Omega)}^\frac14 \!.
% \end{split}
%\end{equation}
On the other hand, combining \eqref{W24} with $q=\varphi_1$ and $f=\Phi$, together with \eqref{GN} and \eqref{inter-2}, we have
\begin{align*}
\norm{ \mathcal{G}_{\varphi_1} \Phi }_{W^{2,4}(\Omega)}
 &\leq C \left( 
 \norm{\nabla \varphi_1}_{L^2(\Omega)}^\frac14 
 \norm{\varphi_1}_{H^2(\Omega)}^\frac34 
 \norm{\nabla \mathcal{G}_{\varphi_1} \Phi}_{L^2(\Omega)}^\frac14  
 \norm{\mathcal{G}_{\varphi_1} \Phi}_{H^2(\Omega)}^\frac34 
 +
 \norm{ \Phi }_{L^2(\Omega)}^\frac12 
 \norm{ \nabla \Phi }_{L^2(\Omega)}^\frac12  
\right)
\\
&\leq 
 C
 \norm{\nabla \varphi_1}_{L^2(\Omega)}^\frac14 
 \norm{\varphi_1}_{H^2(\Omega)}^\frac34 
 \norm{\nabla \mathcal{G}_{\varphi_1} \Phi}_{L^2(\Omega)}^\frac14  
 \norm{\mathcal{G}_{\varphi_1} \Phi}_{H^2(\Omega)}^\frac34
 \\
&\quad 
+ C
 \| \nabla \G_{\varphi_1} \Phi\|_{L^2(\Omega)}^\frac14 
 \| \nabla \Phi\|_{L^2(\Omega)}^\frac34.
\end{align*}
Exploiting once again \eqref{H2-Phi} and the regularity  $\varphi_1 \in L^\infty(0,\infty;H^1(\Omega))$, we deduce from the above estimate that
\begin{align}
\norm{ \mathcal{G}_{\vphi_1} \Phi }_{W^{2,4}(\Omega)}
&\leq 
C   \norm{\vphi_1}_{H^2(\Omega)}^\frac32 
\norm{\nabla \mathcal{G}_{\vphi_1} \Phi}_{L^2(\Omega)}
+
C   \norm{\vphi_1}_{H^2(\Omega)}^\frac34 
\norm{\nabla \mathcal{G}_{\vphi_1} \Phi}_{L^2(\Omega)}^\frac58
\norm{\nabla \Phi}_{L^2(\Omega)}^\frac38
\notag
\\
&\quad +
 C \norm{ \nabla \mathcal{G}_{\vphi_1} \Phi }_{L^2(\Omega)}^\frac14 
 \norm{ \nabla \Phi }_{L^2(\Omega)}^\frac34.
 \label{W24-Phi}
\end{align}
Hence, observing that
\begin{equation*}
\begin{split}
|I_2|
&\leq C  \norm{\partial_t \vphi_1}_{ H^{-1}_{(0)}(\Omega)} 
\norm{ b'(\vphi_1)}_{L^\infty(\Omega)} 
\norm{ D^2 \G_{\vphi_1} \Phi}_{L^4(\Omega)} 
\norm{ \nabla \G_{\vphi_1} \Phi }_{L^4(\Omega)}\!, 
\end{split}
\end{equation*}
we infer from \eqref{W14-2} and \eqref{W24-Phi} that
\begin{align*}
|I_2 | &\leq  K_1+ \dots+ K_6,
\end{align*}
where
\begin{align*}
K_1 &= C \norm{\partial_t \vphi_1}_{ H^{-1}_{(0)}(\Omega)} \| \vphi_1\|_{H^2(\Omega)}^2 
 \norm{ \nabla \G_{\vphi_1} \Phi }_{L^2(\Omega)}^2\!,
 \\
 K_2 &= C \norm{\partial_t \varphi_1}_{ H^{-1}_{(0)}(\Omega)} \| \vphi_1\|_{H^2(\Omega)}^\frac32 
 \norm{ \nabla \G_{\vphi_1} \Phi }_{L^2(\Omega)}^\frac74
 \norm{\nabla \Phi}_{L^2(\Omega)}^\frac14\!,
 \\
 K_3 &= C \norm{\partial_t \vphi_1}_{ H^{-1}_{(0)}(\Omega)} \| \vphi_1\|_{H^2(\Omega)}^\frac54
\norm{ \nabla \G_{\vphi_1} \Phi }_{L^2(\Omega)}^\frac{13}{8}
 \norm{\nabla \Phi}_{L^2(\Omega)}^\frac38\!,
 \\
 K_4 &= C \norm{\partial_t \vphi_1}_{ H^{-1}_{(0)}(\Omega)} 
 \| \vphi_1\|_{H^2(\Omega)}^\frac34
\norm{ \nabla \G_{\vphi_1} \Phi }_{L^2(\Omega)}^\frac{11}{8}
 \norm{\nabla \Phi}_{L^2(\Omega)}^\frac58\!,
 \\
  K_5 &= C \norm{\partial_t \vphi_1}_{ H^{-1}_{(0)}(\Omega)} 
 \| \vphi_1\|_{H^2(\Omega)}^\frac12
\norm{ \nabla \G_{\vphi_1} \Phi }_{L^2(\Omega)}^\frac{5}{4}
 \norm{\nabla \Phi}_{L^2(\Omega)}^\frac34\!,
 \\
 K_6 &= C \norm{\partial_t \vphi_1}_{ H^{-1}_{(0)}(\Omega)}
\norm{ \nabla \G_{\vphi_1} \Phi }_{L^2(\Omega)}
 \norm{\nabla \Phi}_{L^2(\Omega)}\!.
\end{align*}
By the Young inequality, we have
\begin{align*}
K_1 &\leq C \left( \norm{\partial_t \vphi_1}_{ H^{-1}_{(0)}(\Omega)}^2+ \| \vphi_1\|_{H^2(\Omega)}^4
 \right) \norm{ \nabla \G_{\vphi_1} \Phi }_{L^2(\Omega)}^2 \!,
\\[5pt]
K_2 &\leq \frac{1}{40}  \norm{\nabla \Phi}_{L^2(\Omega)}^2 +
 C  \norm{\partial_t \varphi_1}_{ H^{-1}_{(0)}(\Omega)}^\frac87 \| \vphi_1\|_{H^2(\Omega)}^\frac{12}{7} \norm{ \nabla \G_{\vphi_1} \Phi }_{L^2(\Omega)}^2
 \\
 &\leq \frac{1}{40}  \norm{\nabla \Phi}_{L^2(\Omega)}^2 +
 C  \left( \norm{\partial_t \vphi_1}_{ H^{-1}_{(0)}(\Omega)}^2+ \| \vphi_1\|_{H^2(\Omega)}^4\right) \norm{ \nabla \G_{\vphi_1} \Phi }_{L^2(\Omega)}^2\!,
\\[5pt]
K_3 &\leq \frac{1}{40}  \norm{\nabla \Phi}_{L^2(\Omega)}^2 
 + C \norm{\partial_t \vphi_1}_{ H^{-1}_{(0)}(\Omega)}^\frac{16}{13} 
 \| \vphi_1\|_{H^2(\Omega)}^\frac{20}{13}
\norm{ \nabla \G_{\vphi_1} \Phi }_{L^2(\Omega)}^2
\\
&\leq \frac{1}{40}  \norm{\nabla \Phi}_{L^2(\Omega)}^2 
 + C \left( \norm{\partial_t \vphi_1}_{ H^{-1}_{(0)}(\Omega)}^2 +
 \| \vphi_1\|_{H^2(\Omega)}^4\right)
\norm{ \nabla \G_{\vphi_1} \Phi }_{L^2(\Omega)}^2\!,
 \\[5pt]
 K_4
 &\leq \frac{1}{40}  \norm{\nabla \Phi}_{L^2(\Omega)}^2 
 + C \norm{\partial_t \vphi_1}_{ H^{-1}_{(0)}(\Omega)}^\frac{16}{11}
 \| \vphi_1\|_{H^2(\Omega)}^\frac{12}{11}
\norm{ \nabla \G_{\vphi_1} \Phi }_{L^2(\Omega)}^2
\\
& \leq \frac{1}{40}  \norm{\nabla \Phi}_{L^2(\Omega)}^2 
 + C \left( \norm{\partial_t \vphi_1}_{ H^{-1}_{(0)}(\Omega)}^2 +
 \| \vphi_1\|_{H^2(\Omega)}^4 \right)
\norm{ \nabla \G_{\vphi_1} \Phi }_{L^2(\Omega)}^2\!,
 \\[5pt]
 K_5 
 &\leq \frac{1}{40}  \norm{\nabla \Phi}_{L^2(\Omega)}^2 
 + C \norm{\partial_t \vphi_1}_{ H^{-1}_{(0)}(\Omega)}^\frac{8}{5}
 \| \vphi_1\|_{H^2(\Omega)}^\frac{4}{5}
\norm{ \nabla \G_{\vphi_1} \Phi }_{L^2(\Omega)}^2
\\
&\leq \frac{1}{40}  \norm{\nabla \Phi}_{L^2(\Omega)}^2 
 + C \left( \norm{\partial_t \vphi_1}_{ H^{-1}_{(0)}(\Omega)}^2 +
 \| \vphi_1\|_{H^2(\Omega)}^4\right)
\norm{ \nabla \G_{\vphi_1} \Phi }_{L^2(\Omega)}^2\!,
 \\[5pt]
 K_6 
 & \leq \frac{1}{40}  \norm{\nabla \Phi}_{L^2(\Omega)}^2 
 + C \norm{\partial_t \vphi_1}_{ H^{-1}_{(0)}(\Omega)}^2 \norm{ \nabla \G_{\vphi_1} \Phi }_{L^2(\Omega)}^2 \!.
\end{align*}
Therefore, we conclude that 
\begin{equation}
\label{I2-f}
|I_2| \leq \frac{1}{8}  \norm{\nabla \Phi}_{L^2(\Omega)}^2 
 + C \left( \norm{\partial_t \vphi_1}_{ H^{-1}_{(0)}(\Omega)}^2 +
 \| \vphi_1\|_{H^2(\Omega)}^4\right)
\norm{ \nabla \G_{\vphi_1} \Phi }_{L^2(\Omega)}^2\!.
\end{equation}
%collecting $R_1$ and $R_2$ together, we find
%\begin{equation}
%|I_1| \leq \frac{1}{4}  \norm{\nabla \Phi}_{L^2(\Omega)}^2 
% + C \left( \norm{\partial_t \phi_1}_{V_0'}^2 +
 %\| \vphi_1\|_{H^2(\Omega)}^4\right)
%\norm{ \nabla \G_{\vphi_1} \Phi }_{L^2(\Omega)}^2
%\end{equation}

Concerning $I_3$, we observe from \eqref{wCH1} and  \eqref{AUTO} that 
\begin{align*}
I_3
&= - \l \partial_t \vphi_2, (\G_{\vphi_1}-\G_{\vphi_2})  \Phi \r
\\ 
&= \int_{\Omega} b(\vphi_2) \nabla \mu_2 \cdot \nabla \left( (\G_{\vphi_1}-\G_{\vphi_2})  \Phi \right) \, \d x
\\
&= \int_\Omega \nabla \mu_2 \cdot \left( b(\vphi_2)-b(\vphi_1) \right)  \nabla \G_{\vphi_1}   \Phi \, \d x 
+
\int_\Omega \nabla \mu_2 \cdot b(\vphi_1) \nabla \G_{\vphi_1} \Phi  \, \d x
\\
& \quad  -
\int_\Omega \nabla \mu_2 \cdot b(\vphi_2) \nabla \G_{\vphi_2} \Phi \, \d x
\\
&= - \int_\Omega \nabla \mu_2 \cdot \left( b(\vphi_1)-b(\vphi_2) \right)  \nabla \G_{\vphi_1} \Phi \, \d x. 
\end{align*} 
Then, exploiting \eqref{GN}, \eqref{inter-2} and \eqref{W14-2} once again, we have
\begin{align}
|I_3|
&\leq  \norm{\nabla \mu_2}_{L^2(\Omega)} 
\norm{b(\vphi_1)-b(\vphi_2)}_{L^4(\Omega)} 
\norm{\nabla \mathcal{G}_{\vphi_1}\Phi}_{L^4(\Omega)}
\notag
\\
&\leq C \norm{\nabla \mu_2}_{L^2(\Omega)} 
\norm{ \Phi }_{L^4(\Omega)} 
\norm{\nabla \mathcal{G}_{\vphi_1}\Phi}_{L^4(\Omega)}
\notag
\\
&\leq 
C \norm{\nabla \mu_2}_{L^2(\Omega)} 
\norm{ \nabla \mathcal{G}_{\vphi_1} \Phi }_{L^2(\Omega)}^\frac14 
\norm{ \nabla \Phi }_{L^2(\Omega)}^\frac34 
\notag
\\
&\quad \times
\left(  \| \vphi_1\|_{H^2(\Omega)}^\frac12 
\norm{ \nabla \mathcal{G}_{\vphi_1} \Phi }_{L^2(\Omega)}+
 \norm{ \nabla \mathcal{G}_{\vphi_1} \Phi }_{L^2(\Omega)}^\frac34 
 \norm{ \nabla \Phi }_{L^2(\Omega)}^\frac14\right)
 \notag
 \\
 &\leq C \norm{\nabla \mu_2}_{L^2(\Omega)} 
  \| \vphi_1\|_{H^2(\Omega)}^\frac12 
  \norm{ \nabla \mathcal{G}_{\vphi_1} \Phi }_{L^2(\Omega)}^\frac54 
  \norm{ \nabla \Phi }_{L^2(\Omega)}^\frac34 
 \notag
 \\
 &\quad 
 + C \norm{\nabla \mu_2}_{L^2(\Omega)}  
 \norm{ \nabla \mathcal{G}_{\vphi_1} \Phi }_{L^2(\Omega)} 
 \norm{ \nabla \Phi }_{L^2(\Omega)}
 \notag
 \\
 &\leq \frac18  \norm{ \nabla \Phi }_{L^2(\Omega)}^2
 + C  \left( \norm{\nabla \mu_2}_{L^2(\Omega)}^\frac85  
 \| \vphi_1\|_{H^2(\Omega)}^\frac45 
 + \norm{\nabla \mu_2}_{L^2(\Omega)}^2  \right) 
 \norm{ \nabla \mathcal{G}_{\vphi_1} \Phi}_{L^2(\Omega)}^2
 \notag
 \\
 \label{I3-f}
  &\leq \frac18  \norm{ \nabla \Phi }_{L^2(\Omega)}^2
 + C  \left( \norm{\nabla \mu_2}_{L^2(\Omega)}^2+
 \| \vphi_1\|_{H^2(\Omega)}^4
   \right) \norm{ \nabla \mathcal{G}_{\vphi_1} \Phi }_{L^2(\Omega)}^2.
\end{align}
Also, by \eqref{inter-2}, we notice that
\begin{equation}
\label{I_4}
\Theta_0 \norm{\Phi}_{L^2(\Omega)}^2 \leq 
\frac18 \norm{\nabla \Phi}_{L^2(\Omega)}^2 
 +C
\norm{ \nabla \G_{\vphi_1} \Phi }_{L^2(\Omega)}^2 \!.
\end{equation}

We are now in the position to derive the desired continuous dependence estimate \eqref{UNIQ}. Recalling that $\norm{ \nabla \G_{\vphi_1} \Phi }_{L^2(\Omega)}$ and $\norm{\sqrt{b(\vphi_1)} \nabla \G_{\vphi_1} \Phi}_{L^2(\Omega)}$ are equivalent norms, combining the differential inequality \eqref{uniqueness estimate} with the estimates \eqref{I1-f}, \eqref{I2-f}, \eqref{I3-f} and \eqref{I_4}, 
we end up with
\begin{equation}
\ddt  \norm{\sqrt{b(\vphi_1)} \nabla \G_{\vphi_1} \Phi}_{L^2(\Omega)}^2 +  \norm{\nabla \Phi}_{L^2(\Omega)}^2 
\leq h(t) \norm{ \sqrt{b(\vphi_1)} \nabla \G_{\vphi_1} \Phi}_{L^2(\Omega)}^2
\!
\end{equation}
where 
$$
h(\cdot)= C \Big( 1+ 
\norm{\nabla \mu_2}_{L^2(\Omega)}^2 
+ \norm{\partial_t \varphi_1}_{ H^{-1}_{(0)}(\Omega)}^2 +
 \| \vphi_1\|_{H^2(\Omega)}^4 \Big) \in L^1(0,T), \quad \forall \, T>0.
$$
Therefore, a standard application of the Gronwall lemma entails that
\begin{equation}
\norm{\sqrt{b(\vphi_1)} \nabla \G_{\vphi_1}\left( \vphi_1(t)-\vphi_2(t) \right)}_{L^2(\Omega)}^2 
\leq \norm{\sqrt{b(\vphi_1)} \nabla \G_{\vphi_1} \left( \vphi_1^0-\vphi_2^0 \right)}_{L^2(\Omega)}^2 
\mathrm{e}^{\int_0^t h(s)\, \d s}, \quad \forall \, t \geq 0.
\end{equation}
Hence, if $\vphi_1^0=\vphi_2^0$, the above inequality gives the uniqueness of weak solutions. 

\section{Proof of Theorem \ref{Goal_thm} - (C) : Propagation of regularity}
\label{S-Strong}

In this section, we show that any weak solution is more regular for positive times accordingly with Theorem \ref{Goal_thm} - part (C). 
To this aim, we first prove the following result.
\begin{proposition}\label{CHstrong-solution}% Strong Solutions
Assume that $b \in C^1([-1,1])$ satisfies \eqref{m-ndeg}.
Let $\vphi_0\in H^2(\Omega)$ be such that $- \Delta \vphi_0 + \Psi'(\vphi_0) \in H^1(\Omega)$, $\overline{\varphi_0}=m \in (-1,1)$ and $\partial_\n \varphi_0=0$ on $\partial \Omega$. 
Then, there exists a unique strong solution $\varphi$ to \eqref{CH1}-\eqref{CH2} and \eqref{nCH-mu} such that, in addition to \eqref{rw1}-\eqref{rw4}, satisfies
\begin{equation}
\label{SS-phi}
\begin{split}
&\varphi \in L^\infty(0,\infty; W^{2,p}(\Omega)), \quad 
\partial_t \varphi \in L_{\uloc}^2(0 ,\infty;H^1(\Omega)),\\
&\vphi \in L^{\infty}(\Omega\times (0,\infty)) \quad\text{such that}\quad |\vphi(x,t)|<1 
\ \text{a.e. }(x,t)\in \Omega\times (0,\infty),\\
&\mu \in L^\infty(0,\infty; H^1(\Omega))\cap L_{\uloc}^4([0,\infty);H^2(\Omega)),
\quad F'(\varphi) \in L^\infty(0,\infty; L^p(\Omega)),
\end{split}
\end{equation}
for any $2 \leq p <\infty$. The strong solution fulfills the problem \eqref{CH1}-\eqref{CH2} almost everywhere in $\Omega \times (0,\infty)$.
Furthermore, we have the following estimates
\begin{align}
\label{strong est 1}
&\norm{\partial_t \vphi}_{L_{\uloc}^2([0,\infty); H^1(\Omega))} 
+ \norm{\partial_t \vphi}_{L^{\infty}(0, \infty; H^{-1}_{(0)}(\Omega))} 
\le C,
\\
\label{strong est 2}
&\norm{\mu}_{L_{\uloc}^4([0,\infty); H^2(\Omega))}+ \norm{\mu}_{L^{\infty}(0,\infty; H^1(\Omega))} 
\le C,
\\
\label{strong est 3}
&\norm{\vphi}_{L^{\infty}(0,\infty; W^{2,p}(\Omega))} +
\norm{F'(\vphi)}_{L^{\infty}(0, \infty; L^p(\Omega))} 
\le C(p),
\end{align}
and 
\begin{align}
\label{strong est 4}
&\max_{t \geq 0} \| \varphi (t)\|_{C(\overline{\Omega})} \leq 1-\delta,
\end{align}
where the constants $C$, $C(p)$ and $\delta$ only depend on the parameters of the system, the mobility $b$, the initial energy $E(\varphi_0)$, $\overline{\varphi_0}$ and $\| -\Delta \vphi_0 +F'(\vphi_0)\|_{H^1(\Omega)}$.
In addition, if $b \in C^2([-1,1])$, $\mu \in L^2_{\uloc}([0,\infty); H^3(\Omega))$.
\end{proposition}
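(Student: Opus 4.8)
The plan is to realize $\vphi$ as the limit of an approximation scheme (for instance the Faedo--Galerkin/regularized one of \cite{BB1999}, after first smoothing $\vphi_0$ so that $\mu_0:=-\Delta\vphi_0+\Psi'(\vphi_0)$ is approximated in $H^1(\Omega)$), on which every manipulation below is licit with all bounds uniform, and the (routine) limit passage is understood. The heart of the matter is a time-\emph{uniform} bound on $\Lambda(t):=\norm{\sqrt{b(\vphi(t))}\,\nabla\mu(t)}_{L^2(\Omega)}^2$, which rules out finite-time blow-up — hence globality — and yields \eqref{strong est 2}. Its starting point is the second energy estimate: I would test the weak formulation \eqref{wCH1} with $v=\partial_t\mu$, use $\partial_t\mu=-\Delta\partial_t\vphi+\Psi''(\vphi)\partial_t\vphi$ and $\Psi''\ge-\Theta_0$, and arrive at
\[
\tfrac12\ddt\Lambda+\norm{\nabla\partial_t\vphi}_{L^2(\Omega)}^2\le\Theta_0\norm{\partial_t\vphi}_{L^2(\Omega)}^2+|J|,\qquad J:=\tfrac12\int_\Omega b'(\vphi)\,\partial_t\vphi\,|\nabla\mu|^2\,\d x.
\]

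The new ingredient is the estimate of $J$, which I would carry out through the elliptic theory for the concentration-dependent Neumann problem \eqref{epgq} rather than through $\norm{\mu}_{H^2}$. By \eqref{mu&der}, $\nabla\mu=-\nabla\G_\vphi\partial_t\vphi$, so \eqref{W14-2} (with $\vphi_1=\vphi$, $\Phi=\partial_t\vphi$) together with $\vphi\in L^\infty(0,\infty;H^1(\Omega))$ gives $\norm{\nabla\mu}_{L^4(\Omega)}^2\le C\big(\norm{\vphi}_{H^2(\Omega)}\norm{\nabla\mu}_{L^2(\Omega)}^2+\norm{\nabla\mu}_{L^2(\Omega)}^{3/2}\norm{\nabla\partial_t\vphi}_{L^2(\Omega)}^{1/2}\big)$, while \eqref{interp}, \eqref{inter-2} and \eqref{phit-mu} give $\norm{\partial_t\vphi}_{L^2(\Omega)}^2\le\norm{\nabla\G\partial_t\vphi}_{L^2(\Omega)}\norm{\nabla\partial_t\vphi}_{L^2(\Omega)}\le C\norm{\nabla\mu}_{L^2(\Omega)}\norm{\nabla\partial_t\vphi}_{L^2(\Omega)}$. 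Since $|J|\le C\norm{\partial_t\vphi}_{L^2(\Omega)}\norm{\nabla\mu}_{L^4(\Omega)}^2$, inserting these bounds, absorbing $\norm{\nabla\partial_t\vphi}_{L^2(\Omega)}^2$ by Young's inequality, and using $b_m\norm{\nabla\mu}_{L^2}^2\le\Lambda\le b_M\norm{\nabla\mu}_{L^2}^2$ yields
\[
\ddt\Lambda+\norm{\nabla\partial_t\vphi}_{L^2(\Omega)}^2\le g_1(t)\,\Lambda+g_2(t),\qquad g_1:=C\big(1+\Lambda+\norm{\vphi}_{H^2(\Omega)}^{4/3}\norm{\nabla\mu}_{L^2(\Omega)}^{4/3}\big),\ \ g_2:=C.
\]
From \eqref{EE} (and the lower bound on $E$) one has $\int_0^\infty\Lambda\,\d t\le E(\vphi_0)+C$, so that, with $\vphi\in L^4_{\uloc}([0,\infty);H^2(\Omega))$ (cf.\ \eqref{H2-phi-2}) and H\"older's inequality, $\sup_{t\ge0}\int_t^{t+1}(g_1+g_2)\,\d s\le C$. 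Since $\Lambda(0)<\infty$, the uniform Gronwall lemma then gives $\sup_{t\ge0}\Lambda(t)\le C$: this yields globality and, using $|\overline\mu|\le C(1+\norm{\nabla\mu}_{L^2})$ (cf.\ \eqref{mediamu}), the $L^\infty(H^1)$ bound in \eqref{strong est 2}. Integrating over $[t,t+1]$ also gives $\partial_t\vphi\in L^2_{\uloc}([0,\infty);H^1(\Omega))$ (via Poincar\'e--Wirtinger), whence \eqref{strong est 1} after interpolating with $\partial_t\vphi\in L^\infty(0,\infty;H^{-1}_{(0)}(\Omega))$ (from \eqref{phit-mu} and \eqref{mu&der}).

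The remainder is a bootstrap and an adaptation of the uniqueness argument of Section~\ref{S-WEAK}. Elliptic regularity \eqref{W2r} applied to $-\div(b(\vphi)\nabla\mu)=-\partial_t\vphi$ with $p=2$, using $\vphi\in L^4_{\uloc}(H^2)$ and $\partial_t\vphi\in L^4_{\uloc}(L^2)$ (noted above), gives $\mu\in L^4_{\uloc}([0,\infty);H^2(\Omega))$, completing \eqref{strong est 2}; \eqref{W2p} with $\mu^*:=\mu+\Theta_0\vphi\in L^\infty(0,\infty;H^1(\Omega))$ then gives \eqref{strong est 3}. For the separation \eqref{strong est 4} I would invoke, on $-\Delta\vphi+F'(\vphi)=\mu^*\in L^\infty(0,\infty;H^1(\Omega))$, the iteration technique of \cite{GGG2023,GP2024}; the same argument applied to $-\Delta\vphi_0+F'(\vphi_0)=\mu_0+\Theta_0\vphi_0\in H^1(\Omega)$ also shows that $\vphi_0$ is already separated. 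If $b\in C^2([-1,1])$, then \eqref{H3} with $q=\vphi$, $f=\partial_t\vphi$ (again using $\nabla\mu=-\nabla\G_\vphi\partial_t\vphi$, $\vphi\in L^\infty(W^{2,p})$, $\mu\in L^4_{\uloc}(H^2)$ and $\partial_t\vphi\in L^2_{\uloc}(H^1)$) gives $\norm{\mu}_{H^3(\Omega)}\le C\big(1+\norm{\mu}_{H^2(\Omega)}+\norm{\partial_t\vphi}_{H^1(\Omega)}\big)$, hence $\mu\in L^2_{\uloc}([0,\infty);H^3(\Omega))$. Finally, for uniqueness, given two strong solutions with equal mean and $\Phi:=\vphi_1-\vphi_2$, I would again use the proxy $\norm{\sqrt{b(\vphi_1)}\nabla\G_{\vphi_1}\Phi}_{L^2(\Omega)}^2$ but, since now $\partial_t\vphi_1\in L^2_{\uloc}(L^2)$, differentiate \eqref{def G_q} directly to obtain $\l\partial_t\Phi,\G_{\vphi_1}\Phi\r=\ddt\tfrac12(\G_{\vphi_1}\Phi,\Phi)+\tfrac12\int_\Omega b'(\vphi_1)\partial_t\vphi_1|\nabla\G_{\vphi_1}\Phi|^2\,\d x$, in which no second derivative of $b$ appears (so $b\in C^1$ suffices); the nonlinear terms are controlled as in \eqref{I1-f}--\eqref{I3-f} — more easily here, since $\vphi_i\in L^\infty(W^{2,p})$ and $\mu_i\in L^\infty(H^1)$ — and a Gronwall argument yields continuous dependence in $H^{-1}_{(0)}(\Omega)$, hence uniqueness.

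I expect the \textbf{main obstacle} to be the time-uniform bound on $\Lambda$. The classical second energy estimate only produces $\ddt\Lambda\le C(1+\Lambda^2)$, which permits finite-time blow-up — precisely the gap in \cite{BB1999} — and even the sharpened ODE analysis recalled in the Introduction yields at best a double-exponentially growing bound. Extracting instead a dissipative structure $\ddt\Lambda\le g_1\Lambda+g_2$ with $\sup_t\int_t^{t+1}(g_1+g_2)\le C$, suitable for the uniform Gronwall lemma, is what forces the estimate of $J$ to be performed through the elliptic theory for \eqref{epgq} (namely \eqref{W14-2}) rather than through $\norm{\mu}_{H^2}$, together with essential use of the a priori regularity $\vphi\in L^4_{\uloc}([0,\infty);H^2(\Omega))$.
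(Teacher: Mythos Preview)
Your proposal is correct and essentially matches the paper's proof (specifically its ``Second approach''): estimating $J$ via $\|\partial_t\vphi\|_{L^2}\|\nabla\mu\|_{L^4}^2$, controlling $\|\nabla\mu\|_{L^4}$ through the elliptic theory \eqref{H2}/\eqref{W14-2} for $\G_\vphi\partial_t\vphi$, and then invoking $\vphi\in L^4_{\uloc}([0,\infty);H^2(\Omega))$ to close via the uniform Gronwall lemma is exactly what the paper does, and the subsequent bootstrap, separation, $H^3$-regularity and uniqueness steps also agree (the paper simply cites \cite{BB1999,S2007} for uniqueness rather than sketching it). The only small technical omission is that the uniform Gronwall lemma gives the bound for $t\ge 1$ only, so one first applies the classical Gronwall lemma on $[0,1]$ using $\Lambda(0)<\infty$, as the paper does explicitly.
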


\begin{proof}
First of all, owing to \cite[Theorem 2.2]{BB1999} (cf. the three dimensional case), there exists a unique strong solution $\vphi$ to problem \eqref{CH1}-\eqref{CH2} and \eqref{nCH-mu} on a maximal time interval $[0,T_\star)$. In particular, the solution $\vphi$ is the limit of an approximating sequence $\lbrace \vphi_\epsilon \rbrace$, where $\varphi_\epsilon$ is the solution to a suitable approximation problem associated with a regularized potential $F_\epsilon$ suitably converging to $F$ as $\epsilon \to 0$. 
Our aim is to show that $T_\star=+\infty$. More precisely, we will show that $\varphi$ satisfies \eqref{SS-phi}. To this end, we will perform global-in-time estimates leading to \eqref{strong est 1}-\eqref{strong est 3} by directly working on the solution $\vphi$. However, a rigorous proof should be carried out by making the computations below at the approximation level $\vphi_\epsilon$, where also the initial datum is properly regularized (see the proof of \cite[Theorem 4.1]{GMT2019}), and by passing ultimately to the limit. Since this procedure is nowadays well established in the literature, we will leave this step to the interested reader.
\smallskip

Differentiating \eqref{wCH2} with respect to time and taking the duality with $\partial_t \vphi$, we obtain
\begin{equation}\label{diff e test ut}
\norm{\nabla \partial_t \vphi}_{L^2(\Omega)}^2 + \int_\Omega F'' (\vphi) |\partial_t \vphi|^2 \, \d x - \Theta_0 \norm{\partial_t \vphi}_{L^2(\Omega)}^2 = \l \partial_t \mu,\partial_t \vphi\r.
\end{equation}
Using \eqref{wCH1}, we notice that
\begin{equation}\label{counts}
\l \partial_t \mu, \partial_t \vphi \r 
= -\frac{1}{2}\ddt  \int_{\Omega} b(\vphi) \left|\nabla \mu \right|^2 \, \d x  + \frac{1}{2} \int_{\Omega} b'(\vphi) \partial_t \vphi  |\nabla \mu|^2  \, \d x,
\end{equation}
almost everywhere on $(0,T_\star)$.
Collecting \eqref{diff e test ut} and \eqref{counts} yields
\begin{equation}\label{first est}
\begin{split}
\frac{1}{2} \ddt  \int_{\Omega} b(\vphi) \left|\nabla \mu \right|^2 \, \d x &+ \norm{\nabla \partial_t \vphi }_{L^2(\Omega)}^2    +\int_{\Omega} F''(\vphi ) \left|\partial_t \vphi \right|^2 \, \d x \\
&= \Theta_0 \norm{\partial_t \vphi}_{L^2(\Omega)}^2 + \frac{1}{2}  \int_{\Omega} b'(\vphi) \partial_t \vphi  |\nabla \mu|^2 \, \d x,
\end{split}
\end{equation}
almost everywhere on $(0,T_\star)$.
The way to handle the first term on the right-hand side in \eqref{first est} is well known (see \eqref{standard} below). Instead, we need to find a suitable control for the second term in the right-hand side. In order to do so we present two different approaches.
\medskip

\textbf{First approach.} We will follow line by line the proof of \cite[Corollary 2.1]{BB1999} to point out the gap in the argument. Subsequently, we will propose a way to correct it. 

We preliminary recall that $\mu= - \mathcal{G}_{\vphi} \partial_t \vphi + \overline{\Psi'(\vphi)}$ almost everywhere in  $\Omega \times (0,\infty)$, which we will repeatedly use in the rest of the proof. Exploiting first \eqref{interp}, and then  \eqref{CI} with $r=1+\tau$ for $\tau \in (0,1)$, we have
%\begin{equation*}
%\begin{split}
%\left|  \frac{1}{2}  \int_{\Omega} b'(\vphi) \partial_t \vphi  |\nabla \mu|^2 \, \d x \right| 
%&=  \left|  \frac{1}{2}  \int_{\Omega} b'(\vphi) \partial_t \vphi  |\nabla \G_\varphi \partial_t \phi |^2 \, \d x \right| 
%\\
%&=
%\left|  \frac{1}{2}  \int_{\Omega} \partial_t \vphi  \left( b'(\vphi)  |\nabla \G_\varphi \partial_t \phi |^2 - \overline{b'(\vphi)  |\nabla \G_\varphi \partial_t \phi |^2} \right) \, \d x \right| 
%\\
%&\le \norm{\nabla \partial_t \vphi}_{L^2(\Omega)} \norm{\nabla\G\left[  b'(\vphi) \left|\nabla \G_{\vphi}\partial_t \vphi \right|^2-\overline{b'(\vphi) \left|\nabla \G_{\vphi}\partial_t \vphi \right|^2}\right]}_{L^2(\Omega)}\\
%&\le C\sqrt{\frac{1+\tau}{\tau}} \norm{\nabla\partial_t \vphi }_{L^2(\Omega)} 
%\norm{\nabla \G_{\varphi} \partial_t \vphi}_{L^{2(1+ \tau)}(\Omega)}^2,
%\end{split}
%\end{equation*}
\begin{equation*}
\begin{split}
\left|  \frac{1}{2}  \int_{\Omega} b'(\vphi) \partial_t \vphi  |\nabla \mu|^2 \, \d x \right| 
&=  \left|  \frac{1}{2}  \int_{\Omega}  \partial_t \vphi \left( b'(\varphi) |\nabla \mu|^2 - \overline{b'(\varphi) |\nabla \mu|^2|} \right) \, \d x \right| 
\\
&\leq \frac12 \norm{\nabla \partial_t \vphi}_{L^2(\Omega)} 
\norm{\nabla \G \left[  b'(\varphi) \left| \nabla \mu \right|^2-\overline{b'(\vphi) \left|\nabla \mu \right|^2}\, \right]}_{L^2(\Omega)}
\\
&\leq C\sqrt{\frac{1+\tau}{\tau}} \norm{\nabla\partial_t \vphi }_{L^2(\Omega)} 
\norm{  b'(\varphi) \left| \nabla \mu \right|^2-\overline{b'(\vphi) \left|\nabla \mu \right|^2}}_{L^{1+\tau}(\Omega)}
\\
&\leq C\sqrt{\frac{1+\tau}{\tau}} \norm{\nabla\partial_t \vphi }_{L^2(\Omega)} 
\norm{\nabla \mu}_{L^{2(1+ \tau)}(\Omega)}^2\!,
\end{split}
\end{equation*}
where $C$ is a positive constant independent of $\tau$.
Therefore, exploiting Young's inequality we learn that
\begin{equation}\label{second est}
\left| \int_{\Omega} b'(\vphi)\partial_t \vphi |\nabla \mu|^2\, \d x \right| 
\le \frac{1}{2} \norm{\nabla \partial_t \vphi }_{L^2(\Omega)}^2 
+ \frac{C}{\tau} \norm{\nabla \mu }^4_{L^{2(1+\tau)}(\Omega)}, \quad 
\forall \, \tau \in (0,1),
\end{equation}
where the constant $C$ is independent of $\tau$. We highlight that the correct dependence on $\tau$ in the second term of \eqref{second est} is the main difference with \cite[Eqn. (2.48)]{BB1999}. Throughout the rest of the proof, $C$ will denote a generic positive constant independent of $\tau$ whose value may even change in the same line. In particular, $C$ will depend on the parameters of the system, the initial energy $E(\varphi_0)$ and total mass $m$.
 
Next, we proceed with the control of 
$\norm{\nabla \mu}_{L^{2(1+\tau)}(\Omega)}=\norm{\nabla \G_{\varphi} \partial_t \vphi}_{L^{2(1+ \tau)}(\Omega)}$. 
To this end, it follows from \eqref{H2-BB} with $q=\varphi$, $f= \partial_t \varphi$ and $s= 2+ \frac{2\tau}{1-\tau}=\frac{2}{1-\tau}$, for $\tau \in (0,\frac15]$, that 
\begin{equation*}
\norm{\G_{\varphi} \partial_t \varphi}_{H^2(\Omega)} 
\le  
\left(\frac{2C}{\tau(1-\tau)}\right)^{\frac{1}{2(1-\tau)}} 
\norm{\nabla \varphi}_{L^2(\Omega)}^{\frac{\tau}{1-\tau}} 
\norm{\varphi}_{H^2(\Omega)} \norm{\nabla \G_\varphi \partial_t \varphi}_{L^2(\Omega)}
+ \frac{C}{1-\tau} \norm{\partial_t \varphi}_{L^2(\Omega)}\!. 
\end{equation*}
Since $\tau \in (0,\frac15]$, there exists a positive constant $C$ such that 
\begin{equation}
\label{H2-phit}
\norm{\G_{\varphi} \partial_t \varphi}_{H^2(\Omega)} 
\le  C
\left(\frac{1}{\tau}\right)^{\frac58} 
\norm{\nabla \varphi}_{L^2(\Omega)}^{\frac{\tau}{1-\tau}} 
\norm{\varphi}_{H^2(\Omega)} \norm{\nabla \G_\varphi \partial_t \varphi}_{L^2(\Omega)}
+C \norm{\partial_t \varphi}_{L^2(\Omega)}\!. 
\end{equation}
Then, recalling that $(a+b)^p\leq a^p+ b^p$ for $a,b \in \mathbb{R}_+$ and $p\leq 1$, by exploiting \eqref{GN} with $r=2(1+\tau)$ and \eqref{H2-phit}, we find
\begin{equation*}
\begin{split}
\norm{\nabla \mu}^4_{L^{2(1+\tau)}(\Omega)}
&\leq C (1+\tau)^2 \| \nabla \mu \|_{L^2(\Omega)}^\frac{4}{1+\tau}
\norm{ \G_{\varphi} \partial_t \varphi }_{H^2(\Omega)}^\frac{4\tau}{1+\tau}
\\
&\leq 
C (1+\tau)^2 
\left( \frac{1}{\tau}\right)^{\frac{5\tau}{2(1+\tau)}}
\| \nabla \vphi\|_{L^2(\Omega)}^\frac{4\tau^2}{1-\tau^2}
\| \vphi\|_{H^2(\Omega)}^\frac{4\tau}{1+\tau}
\| \nabla \mu \|_{L^2(\Omega)}^4\\
&\quad
+ C(1+\tau)^2 
%\left( \frac{1}{1-\tau}\right)^\frac{4\tau}{1+\tau}
\| \nabla \mu\|_{L^2(\Omega)}^\frac{4}{1+\tau}
\| \partial_t \vphi\|_{L^2(\Omega)}^\frac{4\tau}{1+\tau}
\\
&\leq 
C(1+\tau)^2 
\left( \frac{1}{\tau}\right)^{\frac{5\tau}{2(1+\tau)}}
\| \nabla \vphi\|_{L^2(\Omega)}^\frac{4\tau^2}{1-\tau^2}
\| \vphi\|_{H^2(\Omega)}^\frac{4\tau}{1+\tau}
\| \nabla \mu \|_{L^2(\Omega)}^4\\
&\quad
+ \frac{2\tau}{1+\tau}  \| \partial_t \vphi\|_{L^2(\Omega)}^2
+ \left(\frac{1-\tau}{1+\tau}\right)
\left[C (1+\tau)^2 \right]^{\frac{1+\tau}{1-\tau}}\
%left( \frac{1}{1-\tau}\right)^\frac{4\tau}{1-\tau}
\| \nabla \mu\|_{L^2(\Omega)}^\frac{4}{1-\tau}.
%\\
%\norm{\nabla \vphi_{\ve}}^{\frac{4\tau^2}{1-\tau^2}}\norm{\vphi_{\ve}}_{H^2(\Omega)}^{\frac{4\tau}{\tau +1}} \norm{\nabla \mu_{\ve}}^4\\
% & + C (1+ \tau)^2 \frac{2}{1-\tau} \norm{\nabla \mu_{\ve}}^{\frac{4}{1+ \tau}} \norm{\partial_t \vphi_{\ve}}^{\frac{4\tau}{\tau +1}}.
\end{split}
\end{equation*}
In light of $\tau \in (0,\frac15]$, and observing that the function $s \mapsto s^{-s}$ is bounded on the interval $\left( 0,\frac15 \right]$, there exists a positive constant $C$ independent of $\tau$ such that
\begin{equation*}
\begin{split}
\norm{\nabla \mu}^4_{L^{2(1+\tau)}(\Omega)}
&\leq 
C \left( \| \nabla \vphi\|_{L^2(\Omega)}^\frac{4\tau^2}{1-\tau^2}
\| \vphi\|_{H^2(\Omega)}^\frac{4\tau}{1+\tau}
\| \nabla \mu \|_{L^2(\Omega)}^4
+ \tau  \| \partial_t \vphi\|_{L^2(\Omega)}^2
+ 
\| \nabla \mu\|_{L^2(\Omega)}^\frac{4}{1-\tau}\right)\!.
\end{split}
\end{equation*}
It follows from \eqref{H1-phi} and \eqref{muDeltaphi} (cf. \eqref{H2-phi-1}) that
\begin{equation}
\label{H2-slop}
\| \vphi\|_{H^2(\Omega)}
\leq C \left( 1+ \| \nabla \mu\|_{L^2(\Omega)} \right)\!,
\end{equation}
for some positive constant $C$ depending on $E(\vphi_0)$.
Then, by using \eqref{H1-phi} and \eqref{H2-slop}, we deduce that 
\begin{equation}
\label{nablamu-1}
\begin{split}
\norm{\nabla \mu}^4_{L^{2(1+\tau)}(\Omega)}
&\leq 
C \left( 
\| \nabla \mu\|_{L^2(\Omega)}^4
+\| \nabla \mu \|_{L^2(\Omega)}^{4+\frac{4\tau}{1+\tau}}
+ \| \nabla \mu\|_{L^2(\Omega)}^{4+\frac{4\tau}{1-\tau}}
+ \tau \| \partial_t \vphi\|_{L^2(\Omega)}^2
\right)\!.
\end{split}
\end{equation}
Collecting \eqref{first est}, \eqref{second est}, and \eqref{nablamu-1} all together, we arrive at 
\begin{equation}
\label{third_est}
\begin{split}
&\frac{1}{2} \ddt  \int_{\Omega} b(\vphi) \left|\nabla \mu \right|^2 \, \d x + \frac12 \norm{\nabla \partial_t \vphi }_{L^2(\Omega)}^2   +\int_{\Omega} F''(\vphi ) \left|\partial_t \vphi \right|^2 \, \d x \\
&\quad \leq C
\norm{\partial_t \vphi}_{L^2(\Omega)}^2 + 
\frac{C}{\tau} \left( \| \nabla \mu\|_{L^2(\Omega)}^4
+\| \nabla \mu \|_{L^2(\Omega)}^{4+\frac{4\tau}{1+\tau}}
+ \| \nabla \mu\|_{L^2(\Omega)}^{4+\frac{4\tau}{1-\tau}} \right)\!,
\end{split}
\end{equation}
almost everywhere in $(0,T_\star)$.
By \eqref{inter-2}, we have
\begin{equation}
\label{standard}
C \| \partial_t \vphi\|_{L^2(\Omega)}^2
\leq \frac14 \| \nabla \partial_t \vphi\|_{L^2(\Omega)}^2 +
C \| \nabla \mu\|_{L^2(\Omega)}^2.
\end{equation}
We observe that
$$
\frac{C}{\tau} \left( \| \nabla \mu\|_{L^2(\Omega)}^4
+\| \nabla \mu \|_{L^2(\Omega)}^{4+\frac{4\tau}{1+\tau}}
+ \| \nabla \mu\|_{L^2(\Omega)}^{4+\frac{4\tau}{1-\tau}} \right)
+C \| \nabla \mu\|_{L^2(\Omega)}^2 
\leq \frac{C}{\tau} \left( 1+ \| \nabla \mu\|_{L^2(\Omega)}^2 \right)^{2+\frac{2\tau}{1-\tau}}\!,
$$
for some positive constant $C$ independent of $\tau$. 
By \eqref{m-ndeg} and $F'' (\cdot)\geq 0$, we end up with
\begin{equation}
\frac12  \ddt  \int_{\Omega} b(\vphi) \left|\nabla \mu \right|^2 \, \d x + \frac14 \norm{\nabla \partial_t \vphi }_{L^2(\Omega)}^2   
\leq \frac{C}{\tau} \left( 1+ \int_\Omega b(\vphi) |\nabla \mu|^2 \, \d x \right)^{2+\frac{2\tau}{1-\tau}} \!,
\end{equation}
for any $\tau \in (0,\frac15]$, almost everywhere in $(0,T_\star)$.
Setting 
$$
B(t):= 1+ \int_{\Omega} b(\vphi(t)) \left|\nabla \mu (t)\right|^2 \, \d x,
$$
and $\rho = \frac{5\tau}{4}$,
we infer the differential inequality
\begin{equation}
\label{final-1}
 \ddt  B(t) + \frac12 \norm{\nabla \partial_t \vphi (t) }_{L^2(\Omega)}^2   
\leq \frac{K}{\rho} B^{2(1+\rho)}(t), \quad \forall \, \rho \in \left( 0, \frac14\right)\!,
\end{equation}
 almost everywhere in $(0,T_\star)$. Here, $K$ is a constant independent of $\rho$.
%for some positive constant $K$ depending on $\Omega$, $E(\varphi_0)$ and $b$, but is independent of $\rho$. 
Notice that the above corresponds to \cite[Eqn. (2.53)]{BB1999}, up to the crucial factor $\frac{1}{\rho}$.
By \eqref{EE}, we observe that
\begin{equation}
\label{intB_1}
\int_0^T B(s) \, \d s \leq T + C, \quad \forall \, T \geq 0.
\end{equation}
%namely $B \in L_{\uloc}^1(0,\infty)$. However, we notice that $B \notin L^1(0,\infty)$. 
Under the assumptions on $\vphi_0$ in Proposition \ref{CHstrong-solution}, it holds
\begin{equation}
\label{B0}
\begin{split}
B(0) &\le C \left( 1+ \norm{\nabla \mu(0)}_{L^2(\Omega)}^2 \right)
\leq C \left( 1+ \norm{- \Delta \vphi_0+ F'(\vphi_0)}_{H^1(\Omega)}^2 \right)
<\infty.
\end{split}
\end{equation}
Since $(B^{-2\rho})'= -2\rho B^{-(1+2\rho)} B'$, for any $\rho \in \left( 0, \frac{1}{4}\right)$, it follows from \eqref{final-1} that
\begin{equation}
\label{final-2}
-\frac{1}{2} \ddt \frac{1}{B^{2\rho}(t)} \leq  K B(t).
\end{equation}
We point out that the factor $\rho$ cancels out in \eqref{final-2}, on the contrary of \cite[Eqn. (2.56)]{BB1999}. Integrating the above inequality, we derive that  
\begin{equation}
\label{concl-1}
\begin{split}
B(t) \leq \left[ 1-2 K  B^{2 \rho}(0) \int_0^t B(s) \, \d s \right]^{-\frac{1}{2 \rho}} B(0), \quad \forall \, t \in [0,T_\ast],
\end{split}
\end{equation}
provided $T_{\ast}$ is such that
\begin{equation}
\label{concl-11}
2K B^{2 \rho}(0) \int_0^{T_{\ast}} B(s) \, \d s < 1.
\end{equation}
It is apparent that \eqref{concl-1} only provides a local bound in time on $B$, where $T_\star$ depends on the norm of the initial data. To overcome this issue, we propose an alternative argument leading to a global bound in time on $B$. We report the following Gronwall type result proved in \cite[Lemma 2.1]{liuzhang2023}:
\begin{lemma}
\label{gronwallino}
Let $f: [0,T_0] \to \mathbb{R}$ be an absolutely continuous positive function such that
\begin{equation}
\ddt f(t) \leq \frac{M}{\sigma} g(t) f^{1+\sigma}(t), \quad \text{for a.e. } t \in [0,T_0], \ \forall \, \sigma \in (0,\delta),
\end{equation} 
where $M$ is a positive constant and $g \in L^1(0,T_0)$ is a non-negative function. Then, there holds
\begin{equation}
 f(t) \leq f(0) \left( 2^{\frac{1}{\delta}} + f(0)\right)^{2^{2+16 M \int_0^{T_0} g(s)\, \d s}}\!, \quad 0\leq t \leq T_0.
\end{equation}
\end{lemma}
\noindent
Therefore, going back to \eqref{final-1}, we are in the position to apply Lemma \ref{gronwallino} setting $f=B$,  $g=B$, and choosing $M=2 K $, $\sigma=2\rho$, $\delta=\frac12$ and $T_0=T_\star$ leading to 
\begin{equation}
\label{Glob_B_1}
\sup_{ 0 \leq t \leq T_\star} B(t)\leq B(0) \left( 4 + B(0) \right) ^{2^{2+32 K  \int_0^{T_\star} B(s)\, \d s}}\!.
\end{equation}
Thanks to \eqref{Glob_B_1} and \eqref{B0}, a classical continuation argument entails that $T_\star=\infty$. In addition, in light of \eqref{intB_1} and \eqref{B0}, we conclude that
 \begin{equation}
  \label{Glob_muH1_1}
  \sup_{ 0 \leq t \leq T } \int_{\Omega} |\nabla \mu(t)|^2 \, \d x
  \leq \frac{2}{b_m} \left( C \left( 1+ \norm{-\Delta \vphi_0+ F'(\vphi_0)}_{H^1(\Omega)}^2 \right) \right)^{ {\rm exp} \left( C (1+T) \right)}\!, \quad \forall \, T>0.
 \end{equation}
Hence, the above argument provides a bound of $\|\nabla \mu (t)\|_{L^2(\Omega)}$ which grows with a double exponential rate in time. 
\medskip

\textbf{Second approach.} 
The argument presented here below will simplify the previous proof and will provide a bound of $\| \nabla \mu(t)\|_{L^2(\Omega)}$, which is independent of time. To this end, we now control the nonlinear term as follows
\begin{equation}\label{third_method}
\begin{split}
\left| \int_{\Omega} b'(\vphi )\partial_t \vphi |\nabla \mu |^2 \, \d x\right|
& \le C \norm{ \partial_t \vphi }_{L^2(\Omega)} 
\norm{\nabla \mu}_{L^4(\Omega)}^2\!. 
\end{split}
\end{equation}
Recalling \eqref{inter-2}, \eqref{H2} and \eqref{mu&der}, we observe that 
\begin{align*}
\| \mu - \overline{\mu}\|_{H^2(\Omega)} 
&= \norm{\G_{\varphi} \partial_t \varphi}_{H^2(\Omega)}
\\
&\leq 
C \left( \| \nabla \varphi\|_{L^2(\Omega)} 
\| \varphi\|_{H^2(\Omega)} \| \nabla \mathcal{G}_{\varphi} \partial_t \varphi\|_{L^2(\Omega)}+ \| \partial_t \varphi\|_{L^2(\Omega)} \right)
\\
&\leq C \left(  \| \varphi\|_{H^2(\Omega)} \| \nabla \mu\|_{L^2(\Omega)} + \| \nabla \mu\|_{L^2(\Omega)}^\frac12 \| \nabla \partial_t \varphi\|_{L^2(\Omega)}^\frac12  \right)\!.
\end{align*}
Here, we have also used \eqref{H1-phi} and \eqref{phit-mu}.
Thus, by \eqref{GN}, \eqref{inter-2} and \eqref{phit-mu}, we infer that 
\begin{align*}
\left| \int_{\Omega} b'(\vphi )\partial_t \vphi |\nabla \mu |^2 \, \d x\right|
& \leq 
C \norm{ \partial_t \vphi }_{L^2(\Omega)} 
\norm{\nabla \mu}_{L^2(\Omega)}
\norm{\mu-\overline{\mu}}_{H^2(\Omega)}
\\
& \leq
C \norm{ \nabla \G \partial_t \vphi }_{L^2(\Omega)}^\frac12  \norm{\nabla \partial_t \vphi}_{L^2(\Omega)}^\frac12 \norm{\nabla \mu}_{L^2(\Omega)} \\
& \quad \times
\left(  \| \varphi\|_{H^2(\Omega)} \| \nabla \mu\|_{L^2(\Omega)} + \| \nabla \mu\|_{L^2(\Omega)}^\frac12 \| \nabla \partial_t \varphi\|_{L^2(\Omega)}^\frac12  \right)
\\
&\leq 
C \norm{\nabla \partial_t \vphi}_{L^2(\Omega)}^\frac12 
\norm{\vphi}_{H^2(\Omega)} \norm{\nabla \mu}_{L^2(\Omega)}^\frac52
+ C  \norm{\nabla \partial_t \vphi}_{L^2(\Omega)}
 \norm{\nabla \mu}_{L^2(\Omega)}^2
 \\
 &\leq \frac14 \norm{\nabla \partial_t \vphi}_{L^2(\Omega)}^2 +
 C \| \varphi\|_{H^2(\Omega)}^\frac43 \| \nabla \mu\|_{L^2(\Omega)}^\frac{10}{3} 
 +C \| \nabla \mu\|_{L^2(\Omega)}^4
 \\
  &\leq \frac14 \norm{\nabla \partial_t \vphi}_{L^2(\Omega)}^2 +
 C\left(  \| \varphi\|_{H^2(\Omega)}^\frac43 \| \nabla \mu\|_{L^2(\Omega)}^\frac{4}{3} 
 + \| \nabla \mu\|_{L^2(\Omega)}^2 \right) \| \nabla \mu\|_{L^2(\Omega)}^2
  \\
  &\leq \frac14 \norm{\nabla \partial_t \vphi}_{L^2(\Omega)}^2 +
 C\left(  \| \varphi\|_{H^2(\Omega)}^4 + \| \nabla \mu\|_{L^2(\Omega)}^2 
 \right) \| \nabla \mu\|_{L^2(\Omega)}^2.
\end{align*}
Inserting the above inequality in \eqref{first est}, and recalling that
$$
\Theta_0 \|\partial_t \varphi \|_{L^2(\Omega)}^2 \leq \frac14 \| \nabla \partial_t \vphi\|_{L^2(\Omega)}^2 +
C \| \nabla \mu\|_{L^2(\Omega)}^2,
$$
we conclude that 
\begin{equation}
\label{final-3}
 \ddt \left( \int_{\Omega} b(\vphi) \left|\nabla \mu\right|^2 \, \d x\right) + \norm{\nabla \partial_t \vphi}_{L^2(\Omega)}^2
\leq C\left(1+ \| \varphi\|_{H^2(\Omega)}^4 + \| \nabla \mu\|_{L^2(\Omega)}^2 
 \right) \int_{\Omega} b(\vphi) \left|\nabla \mu\right|^2 \, \d x.
\end{equation}
Exploiting \eqref{mu-0inf}, \eqref{H2-phi-2} and \eqref{B0}, an application of the classical Gronwall lemma entails 
\begin{equation}
\label{Glob_muH1_2}
 \sup_{ 0 \leq t \leq 1} \int_{\Omega} |\nabla \mu(t)|^2 \, \d x
  \leq \frac{C}{b_m}  
  \left( 1+ \norm{-\Delta \vphi_0+ F'(\vphi_0)}_{H^1(\Omega)}^2 \right)\!.
%  {\rm exp}\left( C \right).
 \end{equation}
In order to derive a global control independent of time, we report the 
uniform Gronwall lemma (see \cite[Chapter III, Lemma
1.1]{TEMAM})

\begin{lemma}
\label{gronwall-uni}
Let $f: [t_0, \infty) \to \mathbb{R}$ be an absolutely continuous positive function and $g,h$ two positive locally summable functions on $[t_0, \infty)$, which satisfy
\begin{equation}
\ddt f(t) \leq g(t) f(t) +h(t), \quad \text{for a.e. } t \geq t_0,
\end{equation} 
and 
$$
\int_t^{t+r} f(s) \, \d s \leq a_1, \quad 
\int_t^{t+r} g(s) \, \d s \leq a_2, \quad 
\int_t^{t+r} h(s) \, \d s \leq a_3, \quad \forall \, t \geq t_0,
$$
for some $r, a_j$ positive. 
Then, there holds
\begin{equation}
 f(t) \leq  \left( \frac{a_1}{r}+ a_3\right) \mathrm{e}^{a_2}, \quad \forall \, t \geq t_0+r.
\end{equation}
\end{lemma}
Since 
$$
\sup_{t \geq 0} \int_t^{t+1} \int_{\Omega} b(\vphi) \left|\nabla \mu\right|^2 \, \d x \, \d s\leq C_0,\quad
\sup_{t \geq 0} \int_t^{t+1} C \left(1+ \| \varphi\|_{H^2(\Omega)}^4 + \| \nabla \mu\|_{L^2(\Omega)}^2 
 \right) \, \d s \leq C_1,
$$
where $C_0$ and $C_1$ are two positive constant depending on $E(\vphi_0)$, $m$ and the parameters of the system, an application of Lemma \ref{gronwall-uni} with $t_0=0$, $r=1$ and $h=0$ implies that
\begin{equation}
\label{Glob_muH1_3}
 \sup_{t \geq 1} \int_{\Omega} |\nabla \mu(t)|^2 \, \d x
  \leq  \frac{C_0}{b_m} \, {\rm exp} (C_1).
 \end{equation}
\medskip

\textbf{Conclusions.} Let us denote by $\widetilde{C}$ a generic constant depending on the parameters of the system, the initial energy $E(\varphi_0)$, the total mass $m$, as well as $\| -\Delta \vphi_0 +F'(\vphi_0)\|_{H^1(\Omega)}$.
Thanks to \eqref{poincare} and \eqref{mediamu}, it immediately follows from \eqref{Glob_muH1_2} and \eqref{Glob_muH1_3} that 
\begin{equation}
\label{Glob_muH1_4}
\| \mu\|_{L^\infty(0,\infty; H^1(\Omega))} \leq \widetilde{C}.
% \left( b, E(\vphi_0), \| -\Delta \vphi_0 +F'(\vphi_0)\|_{H^1(\Omega)}\right)\!.
\end{equation}
As a consequence, we learn from \eqref{phit-mu} that
\begin{equation}
\label{phit-inf}
 \| \partial_t \varphi \|_{L^\infty(0,\infty;  H^{-1}_{(0)}(\Omega))}\leq \widetilde{C}.
% \left( b, E(\vphi_0), \| -\Delta \vphi_0 +F'(\vphi_0)\|_{H^1(\Omega)}\right)\!.
\end{equation}
Now, we integrate \eqref{final-3} on the time interval $[t,t+1]$ with $t \geq 0$. Thanks to \eqref{mu-0inf}, \eqref{H2-phi-2} and \eqref{Glob_muH1_4}, we infer that
\begin{equation}
\label{phit-H1}
\sup_{t \geq 0} \int_t^{t+1} \| \nabla \partial_t \vphi (s)\|_{L^2(\Omega)}^2 \, \d s
\leq  \widetilde{C},
%\left(b, E(\vphi_0), \| -\Delta \vphi_0 +F'(\vphi_0)\|_{H^1(\Omega)} \right)\!,
\end{equation}
which gives $\partial_t \vphi \in L^2_{\uloc}([0,\infty); H^{1}_{(0)}(\Omega))$ by the conservation of mass. By \eqref{W2p}, we learn that
\begin{equation}
\label{phi2p}
\| \vphi\|_{L^\infty(0,\infty; W^{2,p}(\Omega))} 
+ \| F'(\vphi)\|_{L^\infty(0,\infty; L^p(\Omega))}
\leq \widetilde{C}(p), \quad \forall \, p \in [2,\infty).
%, b, E(\vphi_0), \| -\Delta \vphi_0 +F'(\vphi_0)\|_{H^1(\Omega)} \right)\!,
\end{equation}
Next, we prove the separation property \eqref{strong est 4} following \cite{CG} and \cite{HW2021}. By \cite[Lemma A.6]{CG}, we find that
\begin{equation}
\label{F''p}
\| F''(\vphi)\|_{L^\infty(0,\infty; L^p(\Omega))}
\leq  \widetilde{C} \left(p\right) \!, \quad \forall \, p \in [2,\infty).
\end{equation}
By \cite[Lemma 3.2]{HW2021}, we derive that 
\begin{equation}
\label{F'W13}
\| F'(\vphi)\|_{L^\infty(0,\infty; W^{1,3}(\Omega))}
\leq   \widetilde{C} .
\end{equation}
As a consequence, we obtain that $\| F'(\vphi)\|_{L^\infty(0,\infty; L^\infty(\Omega))} \leq   \widetilde{C} $. Then, we immediately deduce from $\lim_{s\to \pm 1} |F'(s)| = \infty$ that there exists $\delta>0$ such that  \eqref{strong est 4} holds.

Finally, we derive higher order regularity on the chemical potential. Recalling that
\begin{align*}
\| \mu-\overline{\mu} \|_{H^2(\Omega)}
\leq C \left( \| \nabla \varphi\|_{L^2(\Omega)} \| \varphi\|_{H^2(\Omega)}\| \nabla \mathcal{G}_{\varphi} \partial_t \varphi\|_{L^2(\Omega)}+ \| \partial_t \varphi\|_{L^2(\Omega)} \right)\!,
\end{align*}
thanks to \eqref{Glob_muH1_4} and \eqref{phi2p}, we obtain 
\begin{align*}
\| \mu-\overline{\mu} \|_{H^2(\Omega)} 
\leq 
\widetilde{C}\left( 1+ \| \partial_t \varphi \|_{L^2(\Omega)} \right)\!.
\end{align*}
Since $L_{\uloc}^2([0,\infty);  H^{1}_{(0)}(\Omega)) \cap L^\infty(0,\infty;  H^{-1}_{(0)}(\Omega)) \hookrightarrow L^4_{\uloc}([0,\infty); L^2(\Omega))$, we infer that
\begin{equation}
\label{muH2}
\sup_{t \geq 0} \int_t^{t+1} \| \mu-\overline{\mu}\|_{H^2(\Omega)}^4 \, \d s
\leq   \widetilde{C} .
\end{equation}
Then, in light of \eqref{mediamu}, the latter entails that
$\mu \in L^4_{\uloc}([0,\infty); H^2(\Omega))$. Thus, we have proved \eqref{strong est 1}-\eqref{strong est 4}.

In addition, applying \eqref{H3}, and exploiting the further assumption $b \in C^2([-1,1])$, we have
\begin{align*}
\| \mu -\overline{\mu} \|_{H^3(\Omega)} 
&\leq C
\left( 
\norm{  \frac{b'(\varphi)}{b(\varphi)} \nabla \varphi \cdot \nabla (\mu-\overline{\mu})}_{H^1(\Omega)}
+
\norm{ \frac{\partial_t \varphi}{b(\varphi)} }_{H^1(\Omega)}
 \right)\!.
\end{align*}
We estimate the right-hand side as follows. By standard computations, we find
\begin{align*}
&\norm{  \frac{b'(\varphi)}{b(\varphi)} \nabla \varphi \cdot \nabla (\mu-\overline{\mu})}_{H^1(\Omega)}
\\
&\quad \leq
C \norm{  \frac{b'(\varphi)}{b(\varphi)} \nabla \varphi \cdot \nabla (\mu-\overline{\mu})}_{L^2(\Omega)}
+ C \norm{ \frac{b''(\varphi) b(\varphi) -b'(\varphi)^2}{b(\varphi)^2} \nabla \varphi \, (\nabla \varphi \cdot \nabla \mu)}_{L^2(\Omega)}
\\
&\qquad + C \norm{  \frac{b'(\varphi)}{b(\varphi)} D^2 \varphi \nabla \mu}_{L^2(\Omega)} 
+C \norm{ \frac{b'(\varphi)}{b(\varphi)} D^2 \mu \nabla \varphi}_{L^2(\Omega)}
\\
&\quad \leq C \| \nabla \varphi\|_{L^\infty(\Omega)} \| \nabla \mu\|_{L^2(\Omega)}
+ C \| \nabla \varphi\|_{L^\infty(\Omega)}^2 \| \nabla \mu\|_{L^2(\Omega)}
\\
&\qquad
+ C \| \varphi\|_{W^{2,4}(\Omega)} \| \nabla \mu \|_{L^4(\Omega)}
+ C \| \nabla \varphi\|_{L^\infty(\Omega)} \| \mu-\overline{\mu}\|_{H^2(\Omega)}
\end{align*}
and
\begin{align*}
\norm{ \frac{\partial_t \varphi}{b(\varphi)} }_{H^1(\Omega)}
&\leq 
\norm{ \frac{\partial_t \varphi}{b(\varphi)} }_{L^2(\Omega)}+
 \norm{  \frac{b'(\varphi)}{b^2(\varphi)} \nabla \varphi \, \partial_t \varphi}_{L^2(\Omega)}
+  \norm{ \frac{\nabla \partial_t \varphi}{b(\varphi)}}_{L^2(\Omega)}
\\
& \leq C \norm{ \partial_t \varphi}_{L^2(\Omega)} + 
C \norm{\nabla \varphi}_{L^\infty(\Omega)} \norm{\partial_t \varphi}_{L^2(\Omega)} + C \|\nabla \partial_t \varphi \|_{L^2(\Omega)}.
\end{align*}
Recalling the Sobolev embedding $W^{2,3}(\Omega) \hookrightarrow W^{1,\infty}(\Omega)$, and exploiting \eqref{phi2p}, we arrive at 
\begin{equation}
\| \mu -\overline{\mu} \|_{H^3(\Omega)}  
\leq \widetilde{C} \left( 1+ \| \mu-\overline{\mu}\|_{H^2(\Omega)} + \| \nabla \partial_t \varphi\|_{L^2(\Omega)} \right) \!.
\end{equation}
Hence, by \eqref{phit-H1} and \eqref{muH2}, we conclude that
\begin{equation}
\label{muH3}
\sup_{t \geq 0} \int_t^{t+1} \| \mu-\overline{\mu}\|_{H^3(\Omega)}^2 \, \d s
\leq  \widetilde{C}.
\end{equation}
In light of \eqref{mediamu} and \eqref{Glob_muH1_4}, the latter implies that $\mu \in L^2_{\uloc}([0,\infty); H^3(\Omega))$. Since the uniqueness of the strong solutions follows from \cite[Theorem 2.2]{BB1999} (cf. Theorem \ref{BB} - part (2) and \cite[Theorem 2.3]{S2007}), the proof of Proposition \ref{CHstrong-solution} is complete.
\end{proof}

We are now ready to demonstrate the propagation of regularity stated in Theorem \ref{Goal_thm} - part (C).

\begin{proof}[Proof of Theorem \ref{Goal_thm} - part (C)]
Let $\varphi$ be the global weak solutions originating from the initial condition $\varphi_0$. Fix $\tau>0$. Let $I= \lbrace t \in (0,1]: \partial_\n \varphi(t)=0 \text{ on } \partial \Omega \rbrace$. We recall that $I$ has full Lebesgue measure .
Since 
$$
\int_{I \cap \left(\frac{\tau}{2},\tau \right)} \left\| -\Delta \varphi +\Psi'(\varphi) \right\|_{H^1(\Omega)}^2 \, \d s \leq C,
$$ 
where $C$ is a positive constant depending on the parameters of the system,  $E(\varphi_0)$ and $m$,
there exists $t^\ast \in I \cap \left( \frac{\tau}{2}, \tau \right)$ such that 
\begin{equation}
\label{phiast}
\left\| -\Delta \varphi(t^\ast) +\Psi'(\varphi(t^\ast))  \right\|_{H^1(\Omega)}\leq \frac{2 C}{\tau} \quad \text{and} \quad \partial_\n \varphi(t^\ast)=0 \text{ on } \partial \Omega.
\end{equation}
Thanks to \eqref{phiast}, we apply Proposition \ref{CHstrong-solution} with initial datum $\varphi(t^\ast)$. By the uniqueness of weak solutions, it follows that the strong solution originating from $\varphi(t^\ast)$ coincides with $\varphi$. Thus, we deduce that 
\begin{align}
\label{s est 1}
&\norm{\partial_t \vphi}_{L_{\uloc}^2([t^\ast,\infty); H^1(\Omega))} 
+ \norm{\partial_t \vphi}_{L^{\infty}(t^\ast, \infty;  H^{-1}_{(0)}(\Omega))} 
\le C,
\\
\label{s est 2}
&\norm{\mu}_{L_{\uloc}^4([t^\ast,\infty); H^2(\Omega))}+ \norm{\mu}_{L^{\infty}(t^\ast,\infty; H^1(\Omega))} 
\le C,
\\
\label{s est 3}
&\norm{\vphi}_{L^{\infty}(t^\ast,\infty; W^{2,p}(\Omega))} +
\norm{F'(\vphi)}_{L^{\infty}(t^\ast, \infty; L^p(\Omega))} 
\le C(p),
\end{align}
and 
\begin{align}
\label{s est 4}
&\max_{t \geq t^\ast} \| \varphi (t)\|_{C(\overline{\Omega})} \leq 1-\delta,
\end{align}
where the constants $C$, $C(p)$ and $\delta$ only depend on the parameters of the system, the mobility $b$, the initial energy $E(\varphi_0)$, $\overline{\varphi_0}$ and $\tau$. The estimates \eqref{s est 1}-\eqref{s est 4} simply entail \eqref{REG-phi1}-\eqref{REG-phi2} and \eqref{separation} concluding the proof of Theorem \ref{Goal_thm} - part (C).
\end{proof}

\section{Proof of Theorem \ref{Goal_thm} - (D): Convergence to equilibrium}
\label{S-Long}

The argument herein is based on the method devised in \cite{AW2007}. For any $m\in (-1,1)$, we define the phase space 
$$
V_m = \Big\lbrace f \in H_{(m)}^1(\Omega)\cap L^\infty(\Omega): \| f\|_{L^\infty(\Omega)}\leq 1 \Big\rbrace,
$$
endowed with the metric
$
\mathrm{d}_{V_m}(f, g)=
 \| \nabla (f- g)\|_{L^2(\Omega)}.
$
By \eqref{poincare}, $V_m$ is a complete metric space.
According to Theorem \ref{Goal_thm} - (A) and (B), the problem \eqref{CH1}-\eqref{nCH-mu} generates a dynamical system on $V_m$, also called strongly continuous semigroup,
$$S(t):V_m\to V_m,\quad t\geq 0,$$
acting by the formula
$$
S(t) \varphi_0= \varphi(t), \quad \forall \, t \geq 0,
$$
where $\varphi(t) $ is the unique weak solution to \eqref{CH1}-\eqref{CH2} and \eqref{nCH-mu}. This is a one-parameter family of maps $S(t)$ on $V_m$ satisfying the properties:
\begin{enumerate}
\item[$\diamond$] $S(0)={\rm Id}_{V_m}$;
\item[$\diamond$] $S(t+\tau)=S(t)S(\tau)$, for every $t,\tau\geq 0$;
\item[$\diamond$] the function $t\mapsto S(t)\varphi_0$ is in $C([0,\infty), V_m)$, for every $\varphi_0 \in V_m$;
\item[$\diamond$] $S(t) \in C(V_m, V_m)$, for all $t\geq 0$. 
\end{enumerate}

Let $\varphi$ be the global weak solution departing from $\varphi_0$. Since $\varphi \in L^\infty(1,\infty; W^{2,p}(\Omega))$ for any $2\leq p < \infty$, and $\partial_t \varphi \in L^2(1,\infty; H_{(0)}^{-1}(\Omega))$, it follows that $\varphi \in BUC([1,\infty); W^{2-\varepsilon,p}(\Omega))$ for any $\varepsilon>0$, $2\leq p < \infty$. Then, the $\omega$-limit set 
$$
\omega(\varphi_0)= \Big\lbrace \varphi' \in W^{2-\varepsilon,p}(\Omega): \ \exists \, t_n \to \infty \text{ such that } \varphi(t_n) \to \varphi' \text{ in } W^{2-\varepsilon,p}(\Omega) \Big\rbrace,
$$
is non-empty, compact and connected in $W^{2-\varepsilon,p}(\Omega)$, for any $\varepsilon>0$ (cf. \cite[Theorem 9.1.8]{CH1998}). 
In addition, we observe that $E(\varphi)$ is a strict Lyapunov function for the dynamical system $S(t)$. Thus, we learn from \cite[Theorems 9.2.3 and 9.2.7]{CH1998} that
\begin{equation}
\label{Einf}
\exists \, E_\infty:= \lim_{t\to \infty} E(\varphi(t)) \quad \text{and} \quad E(\varphi')=E_\infty, \ \forall \, \varphi' \in \omega(\varphi_0),
\end{equation}
as well as
\begin{equation}
\omega(\varphi_0) \subseteq \mathcal{E}= \Big\lbrace \varphi' \in W^{2,p}(\Omega)\cap V_m: \ \varphi' \text{ solves } \eqref{Stat-CH1} - \eqref{Stat-CH2} \Big\rbrace,
\end{equation}
namely $\mathcal{E}$ is the set of equilibrium points associated with \eqref{CH1}-\eqref{CH2} and \eqref{nCH-mu}.

Our aim is proving the convergence of the trajectory $\varphi(t)$ to one steady state, namely $\omega(\varphi_0)$ is a singleton. To this end, let us consider $\delta$ such that $|\vphi(x,t)|\leq 1-\delta$ for all $(x,t) \in \overline{\Omega}\times [1,\infty)$ (cf. \eqref{separation}). We define $\widetilde{\Psi}: \mathbb{R} \to \mathbb{R}$ such that $\widetilde{\Psi} \in C^3(\mathbb{R})$, $\widetilde{\Psi}|_{[-1+\frac{\delta}{2},1-\frac{\delta}{2}]}=\Psi$ and $|\widetilde{\Psi}^{(j)}|$ are bounded for $j=1,2,3$. We recall that $\Psi$ is analytic in $(-1+\frac{\delta}{2},1-\frac{\delta}{2})$. 
Now, we set the functional $\widetilde{E}(\varphi): H^1_{(m)}(\Omega) \to \mathbb{R}$ by 
$$
\widetilde{E}(\varphi)= \int_{\Omega} \frac{1}{2} |\nabla \vphi|^2 +\widetilde{\Psi}(\vphi) \, \d x.
$$
It is easily seen that the Frech\'{e}t derivative $D \widetilde{E}: H^1_{(m)}(\Omega)\to H^{1}_{(0)}(\Omega)'$ of $\widetilde{E}$ is given by 
\begin{equation}
\label{Frechet}
\l D \widetilde{E}(\varphi), v \r_{H^{1}_{(0)}(\Omega)', H^1_{(0)}(\Omega)}
= \int_\Omega \nabla \varphi \cdot \nabla v + \widetilde{\Psi}'(\varphi) v \, \d x,
\quad \forall \, v \in  H^1_{(0)}(\Omega).
\end{equation} 
We report the Lojasiewicz-Simon inequality (see \cite[Proposition 6.3]{AW2007})
\begin{lemma}
\label{LS}
Let $\varphi' \in \mathcal{E}$. Then, there exist $\theta \in \left(0,\frac12\right]$, $C>0$, $\beta>0$ such that
%\begin{equation}
%|\widetilde{E}(\varphi)- \widetilde{E}(\varphi')|^{1-\theta}
%\leq C \norm{D \widetilde{E}(\varphi)}_{H^{1}_{(0)}(\Omega)'}\!, \quad 
%\forall \, \varphi \in H^1_{(m)}(\Omega): \, \| \varphi-\varphi'\|_{H^1_{(0)}(\Omega)}\leq \beta.
%\end{equation}
\begin{equation}
|\widetilde{E}(\varphi)- \widetilde{E}(\varphi')|^{1-\theta}
\leq C \norm{D \widetilde{E}(\varphi)}_{H^{1}_{(0)}(\Omega)'}\!, 
\end{equation}
for all $\varphi \in H^1_{(m)}(\Omega)$ such that $\| \varphi-\varphi'\|_{H^1_{(0)}(\Omega)}\leq \beta$.
\end{lemma}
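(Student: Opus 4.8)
The plan is to recognise this as the classical \L ojasiewicz--Simon gradient inequality for the energy $\widetilde E$ at one of its critical points $\varphi' \in \mathcal{E}$, and to prove it by a Lyapunov--Schmidt reduction near $\varphi'$ to the finite-dimensional \L ojasiewicz inequality; a detailed argument of exactly this form is carried out in \cite[Proposition 6.3]{AW2007}, which may alternatively simply be quoted.

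First I would linearise. The Hessian $L := D^2 \widetilde E(\varphi') : H^1_{(0)}(\Omega) \to H^1_{(0)}(\Omega)'$, acting (in the duality of mean-zero test functions) by $v \mapsto -\Delta v + \widetilde\Psi''(\varphi')\, v$, is a compact perturbation of the isomorphism $-\Delta : H^1_{(0)}(\Omega) \to H^1_{(0)}(\Omega)'$ (because $\widetilde\Psi'' \in L^\infty(\Omega)$ and $H^1_{(0)}(\Omega) \hookrightarrow L^2(\Omega)$ compactly), hence a self-adjoint Fredholm operator of index zero. Let $N = \ker L$, which is finite-dimensional, let $P$ be the $L^2(\Omega)$-orthogonal projection onto $N$, and split $H^1_{(0)}(\Omega) = N \oplus (I-P)H^1_{(0)}(\Omega)$. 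A crucial preliminary point is real-analyticity near $\varphi'$: by the separation estimate \eqref{separation} the steady state satisfies $\|\varphi'\|_{C(\overline\Omega)} \le 1 - \delta$, so $\widetilde\Psi$ coincides on a neighbourhood of the range of $\varphi'$ with $\Psi$, which is real-analytic on $(-1,1)$; combining this with the two-dimensional Sobolev embeddings $H^1(\Omega) \hookrightarrow L^q(\Omega)$ for every $q < \infty$ and elliptic regularity for the stationary equation, one verifies that $D\widetilde E$ is real-analytic on a suitable neighbourhood of $\varphi'$.

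Next I would carry out the Lyapunov--Schmidt reduction: solve $(I-P)\, D\widetilde E(\varphi' + \eta + \psi) = 0$ for $\psi = \psi(\eta) \in (I-P)H^1_{(0)}(\Omega)$ as a real-analytic function of $\eta \in N$ near $0$, which is legitimate by the analytic implicit function theorem since $\psi(0)=0$ and $(I-P)L$ is an isomorphism of $(I-P)H^1_{(0)}(\Omega)$. Setting $\Gamma(\eta) := \widetilde E(\varphi' + \eta + \psi(\eta))$ gives a real-analytic function on a neighbourhood of $0$ in the finite-dimensional space $N$ with $\nabla\Gamma(0)=0$, so the classical \L ojasiewicz inequality provides $\theta \in (0, \tfrac12]$ and $C>0$ with $|\Gamma(\eta) - \Gamma(0)|^{1-\theta} \le C\, |\nabla \Gamma(\eta)|$ for $\eta$ small. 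Finally I would transfer the estimate back to $H^1_{(0)}(\Omega)$: writing $\varphi = \varphi' + \eta + \zeta$ with $\eta = P(\varphi - \varphi')$, $\zeta = (I-P)(\varphi-\varphi')$, and $\|\varphi - \varphi'\|_{H^1_{(0)}(\Omega)} \le \beta$ small, the construction of $\psi$ gives $\nabla \Gamma(\eta) = P\, D\widetilde E(\varphi' + \eta + \psi(\eta))$ and $\|\zeta - \psi(\eta)\|_{H^1_{(0)}(\Omega)} \le C\,\|(I-P) D\widetilde E(\varphi)\|_{H^1_{(0)}(\Omega)'}$, whence a Taylor expansion of $\widetilde E$ in the $(I-P)$-direction around the reduced manifold (using invertibility of $L$ there) yields
\begin{equation*}
|\widetilde E(\varphi) - \widetilde E(\varphi')| \le |\Gamma(\eta) - \Gamma(0)| + C\, \|(I-P) D\widetilde E(\varphi)\|_{H^1_{(0)}(\Omega)'}^2, \qquad |\nabla\Gamma(\eta)| + \|(I-P) D\widetilde E(\varphi)\|_{H^1_{(0)}(\Omega)'} \le C\,\|D\widetilde E(\varphi)\|_{H^1_{(0)}(\Omega)'}.
\end{equation*}
Combining these with the finite-dimensional inequality, the a priori bound on $\|D\widetilde E(\varphi)\|_{H^1_{(0)}(\Omega)'}$ on the $\beta$-ball, and $1 - \theta \ge \tfrac12$ gives the claim.

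The step I expect to be the main obstacle is the real-analyticity claim, namely that the superposition operator $\varphi \mapsto \Psi'(\varphi)$ is real-analytic in the chosen function-space setting, together with reconciling the fact that $\widetilde\Psi$ equals $\Psi$ only on the $L^\infty$-separation region with the $H^1$-neighbourhood appearing in the statement. A convenient way around the latter is a dichotomy: if $\|D\widetilde E(\varphi)\|_{H^1_{(0)}(\Omega)'}$ stays bounded below on the whole $\beta$-ball the inequality is immediate, because $\widetilde\Psi$ and its derivatives are bounded so that $|\widetilde E(\varphi) - \widetilde E(\varphi')| \le C\beta$ there; in the complementary regime one combines the $H^1$-smallness of $\varphi - \varphi'$ with elliptic regularity and interpolation to place $\varphi$ in the region where $\widetilde\Psi \equiv \Psi$, so that the analytic machinery above applies.
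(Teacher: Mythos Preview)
The paper does not prove this lemma but simply quotes \cite[Proposition 6.3]{AW2007}, exactly as you yourself suggest as an alternative; your Lyapunov--Schmidt sketch is the standard argument carried out there, so your proposal is fully aligned with the paper's approach.

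One small caveat on your final paragraph: the dichotomy you propose to resolve the analyticity obstacle does not close as written, because an arbitrary $\varphi \in H^1_{(m)}(\Omega)$ in the $H^1$-ball around $\varphi'$ solves no equation, so there is no elliptic regularity available to force it into the $L^\infty$-separation region where $\widetilde\Psi \equiv \Psi$ (and $H^1$-closeness alone does not control $L^\infty$ in two dimensions). This is immaterial for the application in Section~\ref{S-Long}, since the inequality is only invoked along the trajectory $\varphi(t)$ for $t \ge t^\star$, which already satisfies \eqref{separation}; in \cite{AW2007} the technical point is handled by setting up the abstract framework so that the relevant Nemytskii operator is analytic in the chosen spaces from the outset.
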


\noindent
Since $\omega(\varphi_0)$ is compact in $H^1_{(m)}(\Omega)$, we cover $\omega(\varphi_0)$ by finitely many open balls $\lbrace B_i \rbrace_{i=1}^N$ in $H^1_{(m)}(\Omega)$ centered at $\varphi'_i \in \omega(\varphi_0)$ with radius $\beta_i$, 
where $\beta_i$ is the constant from Lemma \ref{LS} corresponding to $\varphi'_i$. Recalling that $\widetilde{E}|_{\omega(\varphi_0)}=E_\infty$ and set $U:= \bigcup_{i=1}^N B_i$, there exist universal constants 
$\widetilde{\theta} \in \left(0,\frac12\right]$ and $\widetilde{C}>0$ such that
\begin{equation}
|\widetilde{E}(\varphi)- E_\infty|^{1-\widetilde{\theta}}
\leq \widetilde{C} \norm{D \widetilde{E}(\varphi)}_{H^{1}_{(0)}(\Omega)'}\!, \quad 
\forall \, \varphi \in U.
\end{equation}
Next, owing to $\mathrm{d}_{V_m}(\varphi(t), \omega(\varphi_0)) \to 0$ as $t \to \infty$, there exists $t^\star$ such that $\varphi(t) \in U$, for all $t\geq t^\star$. Let us define the function $H: [ t^\star, \infty) \to \mathbb{R}_+$ by 
$$
H(t):= \left( \widetilde{E}(\varphi(t))- E_\infty \right)^{\widetilde{\theta}}\!.
$$ 
Observing that $\widetilde{E}(\varphi(t)) =E(\varphi)(t)$ for all $t\geq 1$, and by exploiting \eqref{EE}, we compute
\begin{align*}
- \ddt H(t) 
&= - \widetilde{\theta} \left( \widetilde{E}(\varphi(t))- E_\infty \right)^{\widetilde{\theta}-1} \ddt \widetilde{E}(\varphi(t))
=- \widetilde{\theta} \left( \widetilde{E}(\varphi(t))- E_\infty \right)^{\widetilde{\theta}-1} \ddt E(\varphi(t)) 
\\
&\geq \frac{1}{\widetilde{C}} \frac{ \norm{ \sqrt{b(\varphi(t))} \nabla \mu(t)}_{L^2(\Omega)}^2 }{ \norm{D \widetilde{E}(\varphi (t))}_{H^{1}_{(0)}(\Omega)'} }
\geq \frac{b_m}{\widetilde{C} } \frac{ \norm{ \nabla \mu(t)}_{L^2(\Omega)}^2 }{ \norm{D \widetilde{E}(\varphi (t))}_{H^{1}_{(0)}(\Omega)'} }.
\end{align*}
By \eqref{wCH2} and \eqref{poincare}, we deduce that
\begin{align*}
\norm{D \widetilde{E}(\varphi (t))}_{H^{1}_{(0)}(\Omega)'}
&\leq 
\sup_{v \in H^1_{(0)}(\Omega), v \neq 0} \frac{\displaystyle \left|  
\int_\Omega -\Delta \varphi (t) v + \widetilde{\Psi}'(\varphi(t)) v \, \d x \right| }{\| \nabla v\|_{L^2(\Omega)}}
\\
& = \sup_{v \in H^1_{(0)}(\Omega), v \neq 0} \frac{ \displaystyle \left|  
\int_\Omega \left(\mu(t)-\overline{\mu(t)} \right) v \, \d x \right|}{\| \nabla v\|_{L^2(\Omega)}}
\leq C_P^2 \|\nabla \mu (t) \|_{L^2(\Omega)}.
\end{align*}
Therefore, we arrive at
$$
- \ddt H(t) \geq \frac{1}{C_P^2} \frac{b_m}{\widetilde{C} } \|\nabla \mu (t) \|_{L^2(\Omega)}, 
\quad \text{a.e. in } (t^\star, \infty).
$$
Integrating from $t^\star$ to $\infty$, we derive from \eqref{Einf} that 
$$
\int_{t^\star}^\infty \| \nabla \mu(t)\|_{L^2(\Omega)} \, \d t 
\leq \frac{C_P^2 \, \widetilde{C}}{b_m}
H(t^\star), 
$$
which gives that $\nabla \mu \in L^1(t^\star, \infty; L^2(\Omega))$. In light of \eqref{phit-mu}, we conclude that $\partial_t \varphi \in L^1(t^\star, \infty; H^{-1}_{(0)}(\Omega))$. The latter entails that $\lim_{t \to \infty} \int_{t^\star}^t \| \partial_t \varphi (\tau) \|_{H^{-1}_{(0)}(\Omega))} \, \d \tau$ exists.
Hence, observing that 
$$
 \varphi(t) = \varphi(t^\star) +  \int_{t^\star}^t \partial_t \varphi(\tau) \, \d \tau, \quad \forall \, t \geq t^\star\!,
$$
it follows that $\lim_{t \to \infty} \varphi(t)$ exists in $H^{-1}_{(0)}(\Omega)$. Setting $\varphi_\infty=\lim_{t \to \infty} \varphi(t)$, this proves the desired conclusion $\omega(\varphi_0)=\lbrace \varphi_\infty \rbrace$.

\appendix
\section{Proof of the inequality \eqref{H2-BB}}
\label{App-0}
\setcounter{equation}{0}

Consider $r=2$ in \eqref{W2r}. We find
\begin{equation*}
\norm{\G_q f}_{H^2(\Omega)}  
\leq C \left( \norm{f}_{L^2(\Omega)} + \norm{\nabla q}_{L^{\frac{2s}{s-2}}(\Omega)} \norm{\nabla \G_q f}_{L^s(\Omega)}\right)\!, \quad \forall \, s>2,
\end{equation*}
where the constant $C$ only depends on $b$.
Exploiting \eqref{GN}, we have
\begin{equation*}
\norm{\G_q f}_{H^2(\Omega)} 
\leq C \left( \norm{f}_{L^2(\Omega)}
+\sqrt{\frac{2 s}{s-2}} \norm{\nabla q}^{\frac{s-2}{s}}_{L^2(\Omega)} 
\norm{q}_{H^2(\Omega)}^{\frac{2}{s}} 
\sqrt{s} \norm{\nabla \G_q f}_{L^2(\Omega)}^{\frac{2}{s}} 
\norm{\G_q f}_{H^2(\Omega)}^{\frac{s-2}{s}}\right)\!.
\end{equation*}
An application of the Young inequality implies that
\begin{align*}
\left( 1- \frac{s-2}{s} \right) \norm{\G_q f}_{H^2(\Omega)} 
&\le C  \norm{f}_{L^2(\Omega)} 
+ \frac{2 C^\frac{s}{2} }{s}   
\left(\frac{2s^2}{s-2}\right)^{\frac{s}{4}} 
\norm{\nabla q}_{L^2(\Omega)}^\frac{s-2}{2} 
\norm{q}_{H^2(\Omega)} \norm{\nabla \G_q f}_{L^2(\Omega)}\!,
\end{align*}
which gives
\begin{equation*}
\norm{\G_q f}_{H^2(\Omega)} \le  \frac{s C}{2} \norm{f}_{L^2(\Omega)} 
+ C^\frac{s}{2} 
\left(\frac{2s^2}{s-2}\right)^{\frac{s}{4}} 
\norm{\nabla q}_{L^2(\Omega)}^{\frac{s-2}{2}} 
\norm{q}_{H^2(\Omega)} \norm{\nabla \G_q f}_{L^2(\Omega)}.
\end{equation*}
\medskip

\noindent
\textbf{Acknowledgments.} 
The authors are grateful to the referee for the helpful comments that improved  the presentation of this paper.
A. Giorgini is supported by the MUR grant Dipartimento di Eccellenza 2023-2027 of Dipartimento di Matematica, Politecnico di Milano. 
This work is supported by Gruppo Nazionale per l'Analisi Ma\-te\-ma\-ti\-ca, la Probabilit\`{a} e le loro Applicazioni (GNAMPA), Istituto Nazionale di Alta Matematica (INdAM). 

\medskip

\noindent
\textbf{Competing Interests and funding.}
The authors do not have any financial or non-financial interests that are directly or indirectly related to the work submitted for
publication.

\medskip

\noindent
\textbf{Data availability statement.}
No further data is used in this manuscript.


\begin{thebibliography}{99}
%\bibliographystyle{acm}
\itemsep=1pt

%\bibliography{Bib}

\bibitem{ABELS2009}
{\au H.~Abels},
{\ti On a diffuse interface model for two-phase flows of viscous, 
incompressible fluids with matched densities},
{\jou Arch.\ Ration.\ Mech.\ Anal.}
\no{194}{463--506}{2009}

\bibitem{ADG2013-2}
{\au H.~Abels, D.~Depner, H.~Garcke},
{\ti On an incompressible {N}avier-{S}tokes/{C}ahn-{H}illiard system with degenerate mobility},
{\jou Ann.\ Inst.\ H.\ Poincar\'{e} Anal. Non Lin\'{e}aire}
\no{30}{1175--1190}{2013}

\bibitem{AGG2024}
{\au H.~Abels, H.~Garcke, A.~Giorgini},
{\it Global regularity and asymptotic stabilization for the incompressible 
{N}avier-{S}tokes-{C}ahn-{H}illiard model with unmatched densities}, 
{\jou Math.\ Ann.} 
\no{389}{1267--1321}{2024}

\bibitem{AW2007} 
{\au H.~Abels, M.~Wilke}, 
{\ti Convergence to equilibrium for the {C}ahn-{H}illiard equation with a logarithmic free energy},
{\jou Nonlinear Anal.} 
\no{67}{3176--3193}{2007}

%\bibitem{adams1977cone}
%{\au R.A.~Adams, J.~Fournier},
%{\ti Cone conditions and properties of {S}obolev spaces},
%{\jou J.\ Math.\ Anal.\ Appl. },
%\no{61}{713--734}{1977}

\bibitem{BB1999}
{\au J.W.~Barrett, J.F.~Blowey}, 
{\ti Finite element approximation of the {C}ahn-{H}illiard equation with concentration dependent mobility},
{\jou Math.\ Comp.} 
\no{68}{487--517}{1999}

%\bibitem{brezis2010functional}
%{\au H.~Brezis},
%{\bk Functional {A}nalysis, {S}obolev {S}paces and {P}artial {D}ifferential
%  {E}quations},
%\eds{Universitext}{Springer New York}{2010}

\bibitem{CE2021}
{\au D.~Caetano, C.M.~Elliott}, 
{\ti {C}ahn-{H}illiard equations on an evolving surface},
{\jou European J. Appl. Math.} 
\no{32}{937--1000}{2021}

\bibitem{CEGP2023}
{\au D.~Caetano, C.M.~Elliott, M.~Grasselli, A.~Poiatti},
{\ti Regularization and {S}eparation for {E}volving {S}urface {C}ahn-{H}illiard {E}quations}, 
{\jou SIAM J. Math. Anal.},
\no{55}{6625--6675}{2023}

\bibitem{CH} 
{\au J.W.~Cahn, J.E.~Hilliard}, 
{\ti Free energy of a nonuniform system. I. Interfacial free energy},
{\jou J.\ Chem.\ Phys.}
\no{28}{258--267}{1958} 

\bibitem{CH71}
{\au J.W.~Cahn, J.E.~Hilliard}, 
{\ti Spinodal decomposition: A reprise},
{\jou Acta Metallurgica}, 
\no{19}{151--161}{1971}

\bibitem{CMN2019}
{\au C.~Canc{\`e}s, D.~Matthes, F.~Nabet},
{\ti A two-phase two-fluxes degenerate {C}ahn-{H}illiard model as constrained Wasserstein gradient flow},
{\jou Arch.\ Ration.\ Mech.\ Anal.},
\no{233}{837--866}{2019}

\bibitem{CH1998}
{\au T.~Cazenave, A.~Haraux}, 
{\bk An {I}ntroduction to {S}emilinear {E}volution {E}quations}, 
\eds{Oxford Lecture Series in Mathematics and its Applications,
vol. 13, The Clarendon Press Oxford University Press{}New York}{1998} %Translated from the 1990 French original by Yvan Martel and revised by the authors.

\bibitem{CGM2024}
{\au M.~Conti, S.~Gatti, A.~Miranville},
{\ti A perturbation of the {C}ahn-{H}illiard equation with logarithmic nonlinearity}, 
{\jou  J. Differential Equations} 
\no{382}{50--76}{2024}

\bibitem{CG}
{\au M.~Conti, A.~Giorgini}, 
{\it Well-posedness for the {B}rinkman-{C}ahn-{H}illiard system with unmatched viscosities}, 
{\jou  J. Differential Equations} 
\no{268}{6350--6384}{2020}

\bibitem{DD2016}
{\au S.~Dai, Q.~Du},
{\ti Weak solutions for the {C}ahn-{H}illiard equation with degenerate mobility}, {\jou Arch. Ration. Mech. Anal.} 
\no{219}{1161--1184}{2016}

\bibitem{DD}
{\au A.~Debussche, L.~Dettori}, 
{\ti On the {C}ahn-{H}illiard equation with a logarithmic free energy}, 
{\jou Nonlinear Anal.}
\no{24}{1491--1514}{1995}

\bibitem{EG2019}
{\au M.~Ebenbeck, H.~Garcke}, 
{\ti On a {C}ahn-{H}illiard-{B}rinkman model for tumour growth and its singular limits},
{\jou SIAM J. Math. Anal.} 
\no{51}{1868--1912}{2019}

\bibitem{EGS}
{\au M.~Eleuteri, S.~Gatti, G.~Schimperna},
{\ti Regularity and long-time behavior for a thermodynamically consistent model for complex fluids in two space dimensions}, 
{\jou Indiana Univ.\ Math.\ J.}, 
\no{68}{1465--1518}{2019}

\bibitem{ERS}
{\au M.~Eleuteri, E.~Rocca, G.~Schimperna}, 
{\ti Existence of solutions to a two-dimensional model for nonisothermal two-phase flows of incompressible fluids}, 
{\jou Ann.\ Inst.\ H.\ Poincar\'{e} Anal. Non Lin\'{e}aire}, 
\no{33}{1431--1454}{2016}

\bibitem{EG1996}
{\au C.M.~Elliott, H.~Garcke}, 
{\ti On the {C}ahn-{H}illiard equation with degenerate mobility}, 
{\jou SIAM J.\ Math.\ Anal.} 
\no{27}{404--423}{1996}

\bibitem{EL1991}
{\au C.M.~Elliott, S.~Luckhaus}, 
{\ti A generalized diffusion equation for phase separation of a multi component mixture with interfacial free energy}, 
IMA Preprint Series \# 887, 1991.

\bibitem{GGG2023}  
{\au C.G.~Gal, A.~Giorgini, M.~Grasselli}, 
{\it The separation property for 2D {C}ahn-{H}illiard equations: 
Local, nonlocal, and fractional energy cases}, 
{\jou Discrete Contin.\ Dyn.\ Syst.} 
\no{43}{2270--2304}{2023} 

\bibitem{GG2010}
{\au C.G.~Gal, M.~Grasselli},
{\ti Asymptotic behavior of a Cahn-Hilliard-Navier-Stokes in $2D$},
{\jou Ann.\ Inst.\ H.\ Poincar\'{e} Anal. Non Lin\'{e}aire}
\no{27}{401--436}{2010}

\bibitem{GGPS2023}
{\au C.G.~Gal, M.~Grasselli, A.~Poiatti, J.~Shomberg}, 
{\ti Multi-component {C}ahn-{H}illiard systems with singular potentials: Theoretical results}, 
{\jou Appl.\ Math.\ Optim.} 
\no{88}{46pp}{2023}

\bibitem{GP2024}
{\au C.G.~Gal, A.~Poiatti}, 
{\ti Unified framework for the separation property in binary phase-segregation processes with singular entropy densities}, 
{\jou European J.\ Appl.\ Math.}, 
(2024), doi:10.1017/S0956792524000196.

\bibitem{G2020}
{\au A.~Giorgini},
{\ti Well-{P}osedness of a {D}iffuse {I}nterface model for {H}ele-{S}haw Flows}
{\jou J.\ Math.\ Fluid Mech}
\textbf{22}:5 (2020), 36pp.

\bibitem{GGM}
{\au A.~Giorgini, M.~Grasselli, A.~Miranville}, 
{\ti The {C}ahn-{H}illiard-{O}ono equation with singular potential}, 
{\jou Math. Models Methods Appl. Sci.} 
\no{27}{2485--2510}{2017}

\bibitem{GGW2018}
{\au A.~Giorgini, M.~Grasselli, H.~Wu},
{\ti The {C}ahn-{H}illiard-{H}ele-{S}haw system with singular potential},
{\jou Ann. Inst. H. Poincar\'{e} Anal. Non Lin\'{e}aire}
\no{35}{1079--1118}{2018}

\bibitem{GMT2019}
{\au A.~Giorgini, A.~Miranville, R.~Temam},
{\ti Uniqueness and Regularity for the {N}avier-{S}tokes-{C}ahn-{H}illiard system},
{\jou SIAM J.\ Math.\ Anal.}
\no{51}{2535--2574}{2019}

\bibitem{GT2020}
{\au A.~Giorgini, R.~Temam}, 
{\it Weak and strong solutions to the nonhomogeneous incompressible {N}avier-{S}tokes-{C}ahn-{H}illiard system},
{\jou J. Math. Pures Appl.}
\no{144}{194--249}{2020}

\bibitem{GMRS2011}
{\au M.~Grasselli, A.~Miranville, R.~Rossi, G.~Schimperna}, 
{\ti Analysis of the {C}ahn-{H}illiard equation with a chemical potential dependent mobility}, 
{\jou Commun.\ Partial Diff.\ Eqns}
\no{36}{1193--1238}{2011}

\bibitem{Gurtin}
{\au M.E.~Gurtin},
{\ti Generalized {G}inzburg-{L}andau and {C}ahn-{H}illiard equations based on a microforce balance},
{\jou Phys. D}
\no{92}{178--192}{1996}

\bibitem{HW2021}
{\au J.-N.~He, H.~Wu}, 
{\ti Global well-posedness of a {N}avier-{S}tokes-{C}ahn-{H}illiard system with chemotaxis and singular potential in 2{D}}, 
{\jou J.\ Differential Equations}, 
\no{297}{47--80}{2021}

\bibitem{KNP}
{\au N.~Kenmochi, M.~Niezgodka, I.~Pawlow}, 
{\ti Subdifferential operator approach to the {C}ahn-{H}illiard equation with constraint},
{\jou J. Differential Equations}, 
\no{117}{320--356}{1995}

%\bibitem{lions1969}
%{\au J.-L.~Lions},
%{\bk Quelques m{\'e}thodes de r{\'e}solution des probl{\`e}mes aux limites non-lin{\'e}aires},
%\eds{Dunod}{Paris}{1969}

\bibitem{LMS2012}
{\au S.~Lisini, D.~Matthes, G.~Savar\'{e}}, 
{\ti {C}ahn-{H}illiard and thin film equations with nonlinear
mobility as gradient flows in weighted-Wasserstein metrics,} 
{\jou J. Differential Equations} 
\no{253}{814--850}{2012}

\bibitem{liuzhang2023}
{\au Y.~Liu, P.~Zhang},
{\ti On the one time-varying component regularity criteria for 3-D {N}avier-{S}tokes equations},
{\jou SIAM J.\ Math.\ Anal.},
\no{56}{6213--6231}{2024}
%arXiv preprint arXiv:2312.03153, 2023.

\bibitem{MIR2019}
{\au A.~Miranville},
{\bk The {C}ahn-{H}illiard equation: recent advances and applications}, 
Society for Industrial and Applied Mathematics, 2019. 

\bibitem{MZ} 
{\au A.~Miranville, S.~Zelik}, 
{\ti Robust exponential attractors for {C}ahn-{H}illiard type equations with singular potentials},
{\jou Math.\ Methods Appl.\ Sci.} 
\no{27}{545--582}{2004}

\bibitem{O}
{\au T.~Ozawa},
{\ti On critical cases of Sobolev’s inequalities}, 
{\jou J.\ Funct.\ Anal.},
\no{127}{259--269}{1995}

\bibitem{RS}
{\au E.~Rocca, G.~Schimperna}, 
{\ti Universal attractor for some singular phase transition systems}, 
{\jou Phys.\ D} 
\no{192}{279--307}{2004}

\bibitem{S2007}
{\au G.~Schimperna},
{\ti Global attractors for {C}ahn-{H}illiard equations with nonconstant mobility}, {\jou Nonlinearity} 
\no{20}{2365}{2007}

\bibitem{SZ2013}
{\au G.~Schimperna, S.~Zelik}, 
{\ti Existence of solutions and separation from singularities for a class of fourth order degenerate parabolic equations}, 
{\jou Trans.\ Amer.\ Math.\ Soc.} 
\no{365}{3799--3829}{2013}

\bibitem{SB}
{\au Y.~Shin, C. P.~Brangwynne}, 
{\ti Liquid phase condensation in cell physiology and disease}, 
{\jou Science} 
\textbf{367}, eaaf4382 (2017).

\bibitem{TEMAM}
{\au R.~Temam}, 
{\bk Infinite-dimensional dynamical systems in mechanics and physics}, \eds{Springer-Verlag}{New York}{1997}

\bibitem{YIN}
{\au J.~Yin},
{\ti On the {E}xistence of {N}onnegative {C}ontinuous {S}olutions of the {C}ahn-{H}illiard {E}quation},
{\jou J. Differential Equations},
\no{97}{310--327}{1992}

%\bibitem{Trudinger}
%{\au N.S.~Trudinger},
%{\ti On imbeddings into Orlicz spaces and some applications},
%{\jou J.\ Math.\ Mech.}
%\no{17}{473--483}{1967}

\end{thebibliography}
\end{document}